\tikzstyle{vertex} = [fill,shape=circle,node distance=80pt]
\tikzstyle{edge} = [fill,opacity=.5,fill opacity=.5,line cap=round, line join=round, line width=40pt]
\tikzstyle{elabel} =  [fill,shape=circle,node distance=30pt]
\begin{document}
\title{Adjacency energy of Hypergraphs}	

\date{\today}

\author[K. Cardoso]{Kau\^e Cardoso} \address{Instituto Federal do Rio Grande do Sul - Campus Feliz, CEP 95770-000, Feliz, RS, Brazil}\email{\tt kaue.cardoso@feliz.ifrs.edu.br}

\author[R.R. Del-Vecchio]{Renata Del-Vecchio} \address{Instituto de Matem\'atica e Estat\'{\i}stica, UFF, CEP 24210-200, Niter\'oi, RJ, Brazil}\email{\tt rrdelvecchio@id.uff.br}
 	
\author[L.L.S. Portugal]{Lucas Portugal} \address{Graduate program in Mathematics, UFF, CEP 24210-200, Niter\'oi, RJ, Brazil}\email{\tt 	lucasportugal@id.uff.br}

\author[V. Trevisan]{Vilmar Trevisan} \address{Instituto de Matem\'atica e Estat\'{\i}stica, UFRGS,  CEP 91509-900, Porto Alegre, RS, Brazil and \\
Department of Mathematics and Applications, University of Naples Federico II, Italy} \email{\tt trevisan@mat.ufrgs.br}

\pdfpagewidth 8.5 in \pdfpageheight 11 in

\newcommand{\h}{\mathcal{H}}
\newcommand{\g}{\mathcal{G}}
\newcommand{\s}{\mathcal{S}}
\newcommand{\A}{\mathbf{A}}
\newcommand{\Q}{\mathbf{Q}}
\newcommand{\B}{\mathbf{B}}
\newcommand{\C}{\mathbf{C}}
\newcommand{\D}{\mathbf{D}}
\newcommand{\M}{\mathbf{M}}
\newcommand{\N}{\mathbf{N}}
\newcommand{\E}{\mathcal{E}}
\newcommand{\x}{\mathbf{x}}
\newcommand{\y}{\mathbf{y}}
\newcommand{\z}{\mathbf{z}}

\theoremstyle{plain}
\newtheorem{Teo}{Theorem}
\newtheorem{Lem}[Teo]{Lemma}
\newtheorem{Pro}[Teo]{Proposition}
\newtheorem{Cor}[Teo]{Corollary}

\theoremstyle{definition}
\newtheorem{Def}{Definition}[section]
\newtheorem{Afi}[Def]{Affirmation}
\newtheorem{Que}[Def]{Question}
\newtheorem{Exe}[Def]{Example}
\newtheorem{Obs}[Def]{Remark}

\maketitle

\begin{abstract} In this paper, we define and obtain several properties of the (adjacency) energy of a hypergraph. In particular, bounds for this energy are obtained as functions of structural and spectral parameters, such as Zagreb index and spectral radius. We also study how the energy of a hypergraph varies when a vertex/edge is removed or when an edge is divied. In addition, we solved the extremal problem energy for the class of hyperstars, and show that the energy of a hypergraph is never an odd number.\newline

\noindent \textsc{Keywords.} Hypergraphs; Adjacency energy; Edge division; Power graph.\newline

\noindent \textsc{AMS classification.} 05C65; 05C50; 15A18.
\end{abstract}

\section{Introduction}
In Spectral graph theory, the structure of graphs is studied through the eigenvalues/eigenvectors of matrices associated with them. Many researchers around the world, motivated by this theory, have defined some matrices associated with hypergraph, aiming to develop a spectral hypergraph theory \cite{Feng,Reff2014,Rodriguez1}. In 2012, Cooper and Dutle \cite{Cooper} proposed the study of hypergraphs by means of  the adjacency tensor. It is known, however, that to obtain  eigenvalues of tensors has a high computational and theoretical cost \cite{NP-hard}. Perhaps for this reason, recently, some authors have renewed the interest to study matrix representations of hypergraphs, as for example in \cite{Banerjee-matriz,Reff2019,kr-regular,matrix-pec-radius-2017,estrada-index,distance,extremal-hypertree}.

The study of molecular orbital energy levels of $\pi$-electrons in conjugated hydrocarbons may be seen as one of the oldest applications of spectral graph theory \cite{energy-book,energy-huckel}. In those studies, graphs were used to represent hydrocarbon molecules and it was shown that an approximation of the total $\pi$-electron energy may be  computed from the eigenvalues of the graph. Based on this chemical concept, in 1977 Gutman \cite{energy-Gutman} defined graph energy. In 2007, Nikiforov \cite{energy-niki} extended the concept of graph energy to matrices. For a matrix $\M$, its \textit{energy} $\E(\M)$, is defined as the sum of its singular values.

Let $\h$ be a hypergraph and $\A$ its adjacency matrix. We define here the energy of $\h$ as $\E(\A)$ and denote it by  $\E(\h)$.

The main purpose of this paper is to discuss this natural definition of (adjacency)  energy of a hypergraph and to understand whether known properties of graph energy may be extended to the structure of hypergraphs.

We highlight some of the results we obtain. We start by studying a particular class of hypergraphs and determine which hyperstar has highest and lowest energy among this class. By understanding the operations \emph{sum} and \emph{product} of hypergraphs, we obtain restrictions on the values that $\E(\h)$ may have. In particular, we show that  $\E(\h)$ is never an odd number. In addition, we determine bounds for the variation of the energy of a hypergraph $\h$, when we perform operations on its edges or vertices. Our main contribution is to obtain several lower and upper bounds relating $\E(\h)$ to important structural and spectral parameters.

The remaining of the paper is organized as follows. In Section \ref{sec:pre},
we recall some basic definitions about hypergraphs and matrices that will be used. In Section \ref{star}, we study the energy of a hyperstar. In Section \ref{odd}, we prove that the energy of a hypergraph can never be an odd number. In sections \ref{deletion} and \ref{division}, we study what happens when we delete or divide an edge of a hypergraph. Finally, in Section \ref{bounds}, we will obtain bounds for the energy of a hypergrap. These bounds are related to important spectral and structural parameters, such as Zagreb index and spectral radius.

\section{Preliminaries}\label{sec:pre}
In this section, we shall present some basic definitions about hypergraphs and matrices, as well as terminology, notation and concepts that will be useful in our proofs.
\vspace{0.5cm}

A \textit{hypergraph} $\h=(V,E)$ is a pair composed by a set of vertices $V(\h)$ and a set of (hyper)edges $E(\h)$, where each edge is a subset of $V(\h)$, with cardinality greater than or equal $2$.  The \textit{rank} and the \textit{co-rank} of a hypergraph are defined as the largest and smallest cardinality of its edges, respectively. $\h$ is said to be a $k$-\textit{uniform hypergraph} (or a \textit{$k$-graph}) for $k \geq 2$, if all edges have the same cardinality $k$. Let $\mathcal{H}=(V,E)$ and $\mathcal{H}'=(V',E')$ be hypergraphs, if $V'\subseteq V$ and $E'\subseteq E$, then $\mathcal{H}'$ is a \textit{subgraph} of $\h$. The \textit{complete $k$-graph} $\mathcal{K}^{[k]}_n$ on $n$ vertices, is a hypergraph, such that any subset of $k$ vertices is an edge. A hypergraph $\h$ is \textit{linear} if each pair of edges has at most one common vertex.

A \textit{multi-hypergraph} $\h=(V,E)$  is a pair composed by a set of vertices $V(\h)$ and a multi-set of (multi-)edges $E(\h)$, where each edge is a subset of $V(\h)$.

Notice that in a hypergraph the edges must have at least two vertices and distinct edges cannot have exactly the same vertices, while in a multi-hypergraph there may be edges containing exactly the same vertices. Additionally, there could be edges with one or zero vertices.

Let $\h=(V,E)$ be a (multi-)hypergraph. The \textit{edge neighborhood} of a vertex $v\in V$, denoted by $E_{[v]}$, is the set of all edges that contains $v$. The \textit{degree} of a vertex $v\in V$, denoted by $d(v)$, is the number of edges that contain $v$. More precisely, $\;d(v) = |E_{[v]}|$. A hypergraph is $r$-\textit{regular} if $d(v) = r$ for all $v \in V$. We define the \textit{maximum}, \textit{minimum} and \textit{average} degrees, respectively, as
\[\Delta(\h) = \max_{v \in V}\{d(v)\}, \quad \delta(\h) = \min_{v \in V}\{d(v)\}, \quad d(\h) = \frac{1}{|V|}\sum_{v \in V}d(v).\]
Let $\alpha=\{v_ {i_1},\ldots, v_ {i_r}\} \subset V$ be a subset of vertices. We define the \textit{degree of a set} $\alpha$, as the number of edges that contain simultaneously all vertices of $\alpha$ and denote it for $ d(\alpha)$.

Let $\h$ be a hypergraph. A \textit{walk} of length $l$ is a sequence of
vertices and edges $v_0e_1v_1e_2 \ldots e_lv_l$ where $v_{i-1}$ and $v_i$ are
distinct vertices contained in $e_i$ for each $ i=1,\ldots,l$. A \textit{cycle} is a walk where $v_0=v_l.$ The hypergraph is \textit{connected}, if for each pair of vertices $ u, w$ there is a walk
$v_0e_1v_1e_2 \cdots e_lv_l $ where $ u = v_0 $ and $ w = v_l $. Otherwise,
the hypergraph is \textit{disconnected}.

Let $\h = (V, E)$ be a (multi-)hypergraph with $n$ vertices. The \textit{adjacency matrix} $\A(\h)=(a_{ij})$ is defined as a square matrix with $n$ rows. For each pair of vertices $i,j \in V$, if $i=j$, then $a_{ii}=0$, and if $i\neq j$, then $a_{ij}=d(\{i,j\})$. We denote its \textit{characteristic polynomial} by $P_\A(\lambda)=\det(\lambda\mathbf{I}_n-\A)$. Its eigenvalues will be denoted by $\lambda_1(\A)\geq\cdots\geq\lambda_n(\A)$.  If $\x$ is an eigenvector of $\lambda$, we call the pair $(\lambda,\x)$, as an \textit{eigenpair} of $\A$. The \textit{spectral radius} $\rho(\A)$, is the largest absolute value of its eigenvalues. The \textit{(adjacency) energy} of a hypergraph is the sum of the singular values of its adjacency matrix. We notice that $\A$ is a real and symmetric square matrix, then its energy is the sum of the absolute values of the eigenvalues, in other words $$\E(\h) = \sum_{i=1}^n |\lambda_i|.$$

Let $\h=(V,E)$ be a hypergraph with $n$ vertices. For a non-empty subset of vertices $\alpha = \{v_1,\ldots,v_t\} \subset V$ and a vector $\x=(x_i)$ of dimension $n$, we denote $x(\alpha)=x_{v_1}+\cdots+x_{v_t}$. So we have,
\[(\A(\h)\x)_u = \sum_{e \in E_{[u]}}x\left(e-\{u\}\right), \quad \forall u \in V(\h).\]

\section{The energy of a hyperstar}\label{star}
In this section, we will obtain the energy of a hyperstar and determe which hyperstar with $n$ vertices has the highest energy.
\vspace{0.5cm}

First of all, we will present the definition of a power graph, and obtain an algebraic property of its eigenvalues.
\begin{Def}
	Let $\g=(V,E)$ be a graph and let $k \geq 2$ be an
	integer. We define the \textit{power graph} $\g^k$ as
	the $k$-graph with the following sets of vertices and edges
	$$V(\g^k)=V(\g)\cup\left( \bigcup_{e\in E(\g)} \varsigma_e\right) \;\; \textrm{and}\;\;
	E(\g^k)=\{e\cup\varsigma_e \colon e \in E(\g)\},$$	where  $\varsigma_e=\{v_1^e,
	\ldots,v_{k-2}^e\}$ for each edge $e \in E(\g)$.
\end{Def}

We may say that $\g^k$ is obtained from a \textit{base graph} $\g=(V,E)$, adding $k-2$ new vertices to each edge $e \in E(\g)$. For each edge $e\in E(\g)$, we denote $e^k=e\cup \varsigma_e \in E(\g^k)$. The spectrum of this class has already been studied, see for example \cite{Kaue-powers,Kaue-lap}.

We define a \emph{hyperstar} as a power graph of a star.

\begin{Exe}
	The power graph $(S_4)^3$ of the star $S_4$ is illustrated in Figure \ref{fig:ex1}.
	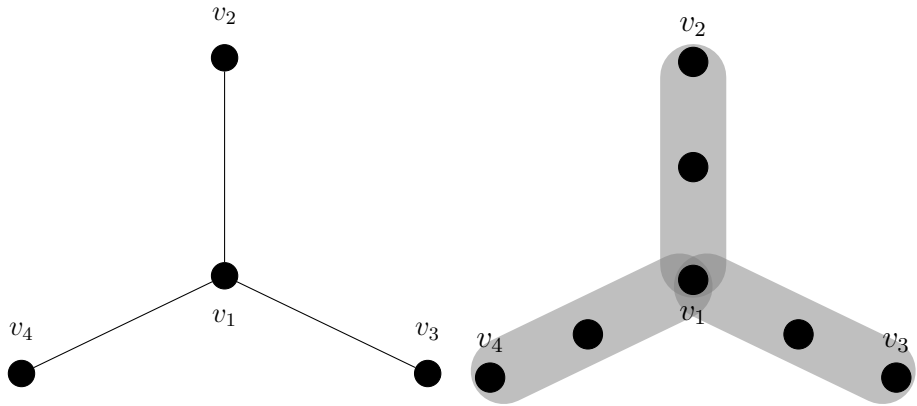
\begin{figure}[h]
		\centering
		\begin{tikzpicture}
		[scale=1,auto=left,every node/.style={circle,scale=0.9}]
		\node[draw,circle,fill=black,label=below:,label=below:\(v_1\)] (v1) at (0,0) {};
		\node[draw,circle,fill=black,label=below:,label=above:\(v_2\)] (v2) at (0,2.9) {};
		\node[draw,circle,fill=black,label=below:,label=above:\(v_3\)] (v3) at (2.7,-1.3) {};
		\node[draw,circle,fill=black,label=below:,label=above:\(v_4\)] (v4) at (-2.7,-1.3) {};
		\path
		(v1) edge node[left]{} (v2)
		(v1) edge node[below]{} (v3)
		(v1) edge node[left]{} (v4);
		\end{tikzpicture}
		\begin{tikzpicture}
		\node[draw,circle,fill=black,label=below:,label=below:\(v_{1}\)] (v11) at (0,0) {};
		
		\node[draw,circle,fill=black,label=below:,label=above:] (v22) at (0,1.5) {};
		\node[draw,circle,fill=black,label=below:,label=above:\(v_{2}\)] (v21) at (0,2.9) {};
		
		\node[draw,circle,fill=black,label=below:,label=above:] (v42) at (1.4,-0.725) {};
		\node[draw,circle,fill=black,label=below:,label=above:\(v_{3}\)] (v31) at (2.7,-1.3) {};
		
		\node[draw,circle,fill=black,label=below:,label=above:] (v62) at (-1.4,-0.725) {};
		\node[draw,circle,fill=black,label=below:,label=above:\(v_{4}\)] (v41) at (-2.7,-1.3) {};

		\begin{pgfonlayer}{background}
		\draw[edge,color=gray,line width=25pt] (v11) -- (v21);
		\draw[edge,color=gray,line width=25pt]  (v11) -- (v31);
		\draw[edge,color=gray,line width=25pt]  (v11) -- (v41);
		\end{pgfonlayer}
		\end{tikzpicture}
		\caption{The power hypergraph $(S_4)^3$.}\label{fig:ex1}
	\end{figure}
\end{Exe}

\begin{Lem}\label{lem:vertex-same-edge}
	Let $\h$ be a $k$-graph having two vertices $u$ and $v$ which are contained exactly in the same edges. If $(\lambda, \x)$ is an eigenpair of $\A(\h)$ with
	$\lambda \neq -d(u)$, then $x_u = x_v$.
\end{Lem}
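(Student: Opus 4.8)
The plan is to work directly with the eigenvector equation, written in the convenient form recorded at the end of Section~\ref{sec:pre}, namely $(\A\x)_w = \sum_{e\in E_{[w]}} x(e-\{w\})$ for each vertex $w$. The first step is to record the two structural consequences of the hypothesis: since $u$ and $v$ lie in exactly the same edges, their edge neighborhoods coincide, $E_{[u]} = E_{[v]}$, and in particular every such edge contains \emph{both} $u$ and $v$. Taking cardinalities, this forces $d(u) = d(v)$; write $d := d(u)$ for this common degree.

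The second step is to expand the two coordinates $(\A\x)_u$ and $(\A\x)_v$ and compare them edge by edge. For each $e \in E_{[u]} = E_{[v]}$, isolate the relevant coordinate: because $v \in e$ we have $x(e-\{u\}) = x_v + x(e \setminus \{u,v\})$, and symmetrically $x(e-\{v\}) = x_u + x(e \setminus \{u,v\})$. The decisive observation is that the leftover term $x(e \setminus \{u,v\})$ is \emph{identical} in both expressions. Summing over all $e \in E_{[u]}$ therefore yields
\[
(\A\x)_u = d\,x_v + \sum_{e \in E_{[u]}} x(e \setminus \{u,v\}), \qquad
(\A\x)_v = d\,x_u + \sum_{e \in E_{[u]}} x(e \setminus \{u,v\}),
\]
where the two trailing sums are literally the same quantity.

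The final step is to subtract the eigenvalue relations $\lambda x_u = (\A\x)_u$ and $\lambda x_v = (\A\x)_v$. The common trailing sum cancels, leaving $\lambda(x_u - x_v) = d(x_v - x_u)$, that is, $(\lambda + d)(x_u - x_v) = 0$. Since by hypothesis $\lambda \neq -d(u) = -d$, the scalar $\lambda + d$ is nonzero, and we conclude $x_u = x_v$, as required.

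The computation is short once the summation formula for $(\A\x)_w$ is in hand, so I do not expect a genuine obstacle. The only point demanding care is verifying that the ``other-vertices'' contribution $x(e \setminus \{u,v\})$ really coincides for $u$ and $v$ across every edge; this is precisely where the hypothesis $E_{[u]} = E_{[v]}$ (and the fact that it places both vertices in each shared edge) is used, and it is also what guarantees the equality of degrees $d(u) = d(v)$ invoked above.
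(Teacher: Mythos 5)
Your proposal is correct and is essentially the paper's own argument: the paper compresses the same computation into the identity $(\lambda+d(u))\,x_u=\sum_{e\in E_{[u]}}x(e)=\sum_{e\in E_{[v]}}x(e)=(\lambda+d(v))\,x_v$, which is just your edge-by-edge decomposition $\lambda x_u = d\,x_v + \sum_{e}x(e\setminus\{u,v\})$ rearranged by adding $d\,x_u$ to both sides. Both hinge on exactly the same facts you isolate, namely $E_{[u]}=E_{[v]}$, $d(u)=d(v)$, and $\lambda\neq -d(u)$, so there is no substantive difference.
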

\begin{proof}
	We observe that,
	\[(\lambda+d(u)) x_u = \sum_{e \in E_{[u]}}x(e) = \sum_{e \in E_{[v]}}x(e) =
	(\lambda+d(v)) x_v.\]
	Since $d(u) = d(v)$ and $\lambda \neq -d(u) $, then the result follows.
\end{proof}

\begin{Teo}
	Let $S_{n}$ be the star on $n$ vertices. If $k \geq 2$ is an integer, then spectrum of $(S_{n})^k$ is:
	$$\{(-1)^{(n-1)(k-2)},\;(k-2)^{n-2},\;r^+,\;r^-\},$$
	where $r^+$ and $r^-$ are the roots of $x^2 - (k-2)x - (n-1)(k-1) = 0$.
\end{Teo}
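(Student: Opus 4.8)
The plan is to exploit the highly symmetric block structure of $(S_n)^k$ together with Lemma \ref{lem:vertex-same-edge}. First I would fix notation: let $c$ denote the center of the star, and for $i=1,\ldots,n-1$ let $e_i^k$ be the edge of $(S_n)^k$ arising from the $i$-th edge of $S_n$, writing $B_i = e_i^k \setminus \{c\}$ for its $k-1$ non-central vertices. Since $S_n$ is linear, so is $(S_n)^k$, and hence $\A((S_n)^k)$ is a $0/1$ matrix: two distinct vertices are adjacent exactly when they lie in a common $B_i$, or when one of them is $c$. In particular $c$ has degree $n-1$, every vertex of every $B_i$ has degree $1$, and the total number of vertices is $N = n + (n-1)(k-2)$, which already matches the sum of the claimed multiplicities.

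The next step is to produce the eigenvalue $-1$ directly. For a fixed block $B_i$, any vector supported on $B_i$ whose entries sum to zero (and which vanishes on $c$ and on the other blocks) is an eigenvector for $-1$: evaluating $(\A\x)_u = \sum_{e\in E_{[u]}} x(e-\{u\})$ gives $(\A\x)_u = -x_u$ for $u \in B_i$ and $0$ at the center. Each block contributes a $(k-2)$-dimensional space of this type, and since the $n-1$ blocks have disjoint supports, this yields $(n-1)(k-2)$ linearly independent eigenvectors for $-1$.

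For the remaining eigenvalues I would invoke Lemma \ref{lem:vertex-same-edge}. The $k-1$ vertices of a block $B_i$ all lie in exactly the same edge, so for any eigenpair $(\lambda,\x)$ with $\lambda \neq -1$ the eigenvector is constant on each $B_i$; write $x_c = t$ and $x_v = a_i$ for $v \in B_i$. The eigenvalue equations then collapse to the reduced $n$-dimensional system $(k-1)\sum_i a_i = \lambda t$ and $t + (k-2)a_i = \lambda a_i$. When $\lambda = k-2$ these force $t=0$ and $\sum_i a_i = 0$, an $(n-2)$-dimensional solution space; when $\lambda \neq k-2$ all $a_i$ coincide and substitution yields exactly $\lambda^2 - (k-2)\lambda - (n-1)(k-1)=0$, i.e. $\lambda \in \{r^+, r^-\}$, each simple.

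Finally I would close by counting: the eigenvalues produced are $-1$ with multiplicity $(n-1)(k-2)$, $k-2$ with multiplicity $n-2$, and the simple roots $r^\pm$, for a total of $N$; as this equals the order of $\A$, the spectrum is completely determined. The point requiring the most care is \emph{completeness} — arguing that no eigenvalue is missed and that the three families are genuinely distinct. The dimension count does most of this work, and one checks that $k-2 \neq r^\pm$ for $n,k \geq 2$; the only degeneracy is the small case $n=2$, where $S_2$ is a single edge, the block $(k-2)^{n-2}$ is vacuous, and the root $r^- = -1$ merges into the $-1$ family. This case is verified directly and does not affect the statement.
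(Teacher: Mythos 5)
Your proposal is correct and follows essentially the same route as the paper: explicit sum-zero eigenvectors on the blocks for $-1$, Lemma \ref{lem:vertex-same-edge} to force block-constancy when $\lambda\neq-1$, the reduced $n\times n$ system yielding the $(n-2)$-dimensional eigenspace for $k-2$ and the quadratic $x^2-(k-2)x-(n-1)(k-1)=0$, and a dimension count for completeness. If anything, you are more careful than the paper, which closes with the multiplicity count alone and does not explicitly verify distinctness of the eigenvalue families or the $n=2$ degeneracy where $r^-=-1$.
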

\begin{proof}
	Let $e \in E(S_n)$ be an edge, let $\{u_1,\ldots,u_{k-1}\}$ be the vertices of degree one in $e^k$ and $2 \leq i \leq k-1$. We can construct the following family of $k-2$ linearly independent vectors.
	$$\x^i = \begin{cases}(x^i)_{u_1} =\;\;\, 1,\\
	(x^i)_{u_i} = -1,\\
	(x^i)_{u}\; =\;\;\, 0, \textrm{ for } u \in
	V((S_n)^k)-\{u_1,u_i\}.\end{cases}$$ Repeating this construction for the other
	edges of $S_n$, we obtain $(n-1)(k-2)$ linearly independent eigenvectors,
	associated with $\lambda = -1$.
	
	Let $E(S_n) = \{e_1,\ldots,e_{n-1}\}$ and $2 \leq j \leq n-1$. We can construct the following family of $n-2$ linearly independent vectors.
	$$\z^j = \begin{cases}(z^j)_{u} =\;\;\, 1, \textrm{ if } u \textrm{ is a vertex of degree one in  } (e_1)^k,\\
	(z^j)_{v} = -1, \textrm{ if } v \textrm{ is a vertex of degree one in } (e_j)^k,\\
	(z^j)_{w}\; =\;\, 0, \textrm{ if } w \textrm{ is not a vertex of degree one in } (e_1)^k \textrm{ or in } (e_j)^k.\end{cases}$$
	So, we have $n-2$ linearly independent eigenvectors, associated with $\lambda = k-2$.
	
	Let $(\lambda,\x)$ an eigenpair of $\A((S_n)^k)$, let $E(S_n) = \{e_1,\ldots,e_{n-1}\}$, let $\{u^j_1,\ldots,u^j_{k-1}\}$ be the vertices of degree one in $(e_j)^k$, and let $v$ be the vertex of degree greater than one in $(S_n)^k$. By Lemma \ref{lem:vertex-same-edge} we know that $x_{u^j_i} = x_{u^j_1}$ for all $i=2,\ldots,k-1$, so the system of eigenvalues can be writen as:
	
	$$\begin{cases}
	\lambda x_{u_1^1} = x_{v} + (k-2)x_{u_1^1}\\
	\lambda x_{u_1^2} = x_{v} + (k-2)x_{u_1^2}\\
	\vdots\\
	\lambda x_{u_1^m} = x_{v} + (k-2)x_{u_1^{n-1}}\\
	\lambda x_{v} = (k-1)x_{u_1^1}+\cdots+(k-1)x_{u_1^{n-1}}\\
	\end{cases}$$
	Notice that, if $\lambda \neq k-2$, then $x_{u_1^1}=\cdots=x_{u_1^{n-1}}$, so the system can be write as:
	
	$$\begin{cases}
	\lambda x_{u_1^1} = x_{v} + (k-2)x_{u_1^1}\\
	\lambda x_{v} = (n-1)(k-1)x_{u_1^1}\\
	\end{cases}$$
	By the first equation we have that, $x_{v} = (\lambda-k+2)x_{u_1^1}$, so by second equation we conclude that
	$$\lambda(\lambda-k+2)x_{u_1^1} = (n-1)(k-1)x_{u_1^1},$$
	so
	$$ \lambda^2 + \lambda(-k+2) - (n-1)(k-1)=0.$$
	
	Finally, to conclude the result just notice that the sum of the multiplicities of the eigenvalues is $|V((S_n)^k)|=(n-1)(k-1)+1$.
\end{proof}

\begin{Cor}
	Let $S_{n}$ be the star on $n$ vertices. If $k \geq 2$ is an integer, then
	$$\E((S_{n})^k) = (k-2)(2n-3) + \sqrt{(k-2)^2+4(n-1)(k-1)}.$$
\end{Cor}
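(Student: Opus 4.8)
The plan is to read the spectrum supplied by the preceding Theorem and compute $\E((S_n)^k)$ directly as the sum of the absolute values of the eigenvalues. These eigenvalues are $-1$ with multiplicity $(n-1)(k-2)$, the value $k-2$ with multiplicity $n-2$, and the two roots $r^+,r^-$ of $x^2-(k-2)x-(n-1)(k-1)=0$. Since $k\ge 2$ we have $k-2\ge 0$, so the contributions of the first two families are simply $(n-1)(k-2)$ and $(n-2)(k-2)$; combining them gives $(k-2)\bigl[(n-1)+(n-2)\bigr]=(k-2)(2n-3)$, which is exactly the polynomial part of the claimed formula.

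It then remains to evaluate $|r^+|+|r^-|$. First I would invoke Vieta's formulas for the quadratic, obtaining $r^++r^-=k-2$ and $r^+r^-=-(n-1)(k-1)$. Because $n\ge 2$ and $k\ge 2$, the quantity $(n-1)(k-1)$ is strictly positive, so $r^+r^-<0$; hence the two roots have opposite signs and, labelling them so that $r^+>0>r^-$, we get $|r^+|+|r^-|=r^+-r^-$. I would then use the identity $(r^+-r^-)^2=(r^++r^-)^2-4r^+r^-$ to write $r^+-r^-=\sqrt{(k-2)^2+4(n-1)(k-1)}$, which is precisely the radical term in the statement. Adding this to $(k-2)(2n-3)$ produces the desired expression.

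There is no genuinely difficult step here; the corollary is an immediate consequence of the Theorem together with elementary facts about quadratic roots. The only two points deserving a moment of care are the sign analysis that lets one replace $|r^+|+|r^-|$ by $r^+-r^-$ (which rests on the product of the roots being negative) and the observation that $k-2\ge 0$, so that no absolute value is lost when summing the $-1$ and $k-2$ eigenvalues. I would state both explicitly to keep the computation transparent.
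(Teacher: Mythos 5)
Your proposal is correct and follows essentially the same route as the paper: read off the spectrum from the preceding theorem, sum the contributions $(n-1)(k-2)+(n-2)(k-2)=(k-2)(2n-3)$, and show $|r^+|+|r^-|=r^+-r^-=\sqrt{(k-2)^2+4(n-1)(k-1)}$. The only cosmetic difference is that the paper verifies the signs of $r^\pm$ from their explicit quadratic-formula expressions, while you deduce the opposite signs from Vieta's relation $r^+r^-=-(n-1)(k-1)<0$ and recover the difference via $(r^+-r^-)^2=(r^++r^-)^2-4r^+r^-$; both arguments are equally valid.
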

\begin{proof}
	We just notice that
	$$(n-1)(k-2)|-1| + (n-2)|k-2| = (k-2)(2n-3).$$
	
	Now, observe that
	$$r^+ = \frac{(k-2) + \sqrt{(k-2)^2+4(n-1)(k-1)}}{2} \geq 0$$
	and
	$$r^- = \frac{(k-2) - \sqrt{(k-2)^2+4(n-1)(k-1)}}{2} \leq 0,$$
	so $|r^+| + |r^-| = r^+-r^- = \sqrt{(k-2)^2+4(n-1)(k-1)}.$
\end{proof}

\begin{Cor}
	If $\s$ is a hyperstar with $t$ vertices, then $$2\sqrt{t-1}=\E(S_t)\leq\E(\s)\leq\E((S_2)^t)=2(t-1).$$
\end{Cor}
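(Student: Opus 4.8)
The plan is to treat any hyperstar on $t$ vertices as a power graph $(S_n)^k$ and invoke the closed formula for $\E((S_n)^k)$ established in the preceding Corollary. Since $(S_n)^k$ has $(n-1)(k-1)+1$ vertices, the hypothesis that it has $t$ vertices is exactly the constraint $(n-1)(k-1)=t-1$; thus every hyperstar on $t$ vertices corresponds to a factorization of $t-1$, and I must show the energy lies in $[2\sqrt{t-1},\,2(t-1)]$ for all of them. I would first verify the two endpoints by direct substitution: taking $k=2$ (so $(S_n)^k$ is the ordinary star $S_t$) gives $\E = 0 + \sqrt{4(t-1)} = 2\sqrt{t-1}$, while taking $n=2$ (so $(S_2)^t$ is a single hyperedge on $t$ vertices) gives $\E = (t-2)+\sqrt{(t-2)^2+4(t-1)} = (t-2)+\sqrt{t^2} = (t-2)+t = 2(t-1)$, matching the claimed extremes.

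To handle a general hyperstar I would substitute $a=n-1\ge 1$ and $b=k-1\ge 1$, so that $ab=t-1$ is fixed and the energy formula rewrites as
$$\E((S_n)^k) = (b-1)(2a-1) + \sqrt{(b-1)^2 + 4ab}.$$
For the lower bound, note that $(b-1)(2a-1)\ge 0$, since both factors are nonnegative ($b\ge 1$ and $a\ge 1$), and that $\sqrt{(b-1)^2+4ab}\ge \sqrt{4ab}=2\sqrt{t-1}$. Adding these two observations gives $\E\ge 2\sqrt{t-1}$ immediately.

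The upper bound is the only part requiring a short computation. I would isolate the radical and reduce the claim $\E\le 2ab=2(t-1)$ to
$$\sqrt{(b-1)^2+4ab}\le 2a+b-1,$$
after first checking that the right-hand side is positive (indeed $2a+b-1\ge 2$ since $a,b\ge 1$), which legitimizes squaring. Squaring both sides and cancelling the common term $(b-1)^2$ collapses the inequality to $4ab \le 4a^2 + 4ab - 4a$, that is, $0\le 4a(a-1)$, which holds because $a\ge 1$.

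The main (and essentially only) obstacle is this upper bound: one must confirm that the right-hand side is nonnegative \emph{before} squaring, after which the algebra telescopes cleanly to $a(a-1)\ge 0$. Because the whole argument uses only $a,b\ge 1$ and never the integrality of $n$ and $k$, it simultaneously covers every factorization of $t-1$, thereby establishing both inequalities for all hyperstars on $t$ vertices.
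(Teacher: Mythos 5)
Your proof is correct, and it takes a genuinely different route from the paper's. The paper substitutes $n=\frac{t-1}{k-1}+1$ into the energy formula, obtaining $\E(\s)=2(t-1)\left(\frac{k-2}{k-1}\right)-(k-2)+\sqrt{(k-2)^2+4(t-1)}$, and then treats $k$ as a continuous variable $x\in[2,t]$: it computes $f''(x)<0$ (so $f'$ is decreasing), checks $f'(t)=\frac{2}{t-1}-\frac{2}{t}>0$ to conclude $f'>0$ throughout, hence $f$ is increasing, pinning the minimum at $x=2$ (the graph star $S_t$) and the maximum at $x=t$ (the single edge $(S_2)^t$). You instead set $a=n-1$, $b=k-1$ with $ab=t-1$ and argue by bare algebra: the lower bound by bounding the two summands $(b-1)(2a-1)\ge 0$ and $\sqrt{(b-1)^2+4ab}\ge 2\sqrt{ab}$ separately, and the upper bound by isolating the radical, verifying $2ab-(b-1)(2a-1)=2a+b-1>0$ before squaring, and collapsing to $4a(a-1)\ge 0$. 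The trade-off: the paper's calculus argument proves strictly more, namely that the energy is monotone in $k$ along the constraint $(n-1)(k-1)=t-1$, which totally orders all hyperstars on $t$ vertices by energy; your argument only captures the two extremes, but it is more elementary (no derivatives, no sign analysis of $f'$ on an interval), and as you observe it uses only $a,b\ge 1$ as real parameters, never the integrality of $n$ and $k$, so it applies uniformly to every factorization of $t-1$ without the implicit divisibility bookkeeping in the paper's substitution.
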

\begin{proof}
	If $\s$ is a hyperstar, then there is $2 \leq n \leq t$ and $2 \leq k \leq t$ such that $\s = (S_n)^k$. In this way, we have that $t = (n-1)(k-1)+1$, so $n = \frac{t-1}{k-1}+1$, therefore
	$$\E(\s) = 2(t-1)\left(\frac{k-2}{k-1}\right) -(k-2)  + \sqrt{(k-2)^2+4(t-1)}.$$
	
	Consider the function $f:[2,t]\rightarrow\mathbb{R},$ defined by
	$$f(x) = 2(t-1)\left(\frac{x-2}{x-1}\right)-(x-2) + \sqrt{(x-2)^2+4(t-1)}.$$
	
	Computing its derivatives, we obtain
	\begin{eqnarray}
	f'(x) &=& \frac{2(t-1)}{(x-1)^2} -1 + \frac{x-2}{\sqrt{(x-2)^2+4(t-1)}},\notag\\
	f''(x) &=& \frac{-4(t-1)}{(x-1)^3} + \frac{4(t-1)}{((x-2)^2+4(t-1))^\frac{3}{2}}.\notag
	\end{eqnarray}
	
	We observe that $f''(x) < 0 \;\Leftrightarrow\; (x-1)^2 < (x-2)^2+4(t-1) \;\Leftrightarrow\; 2x+1 < 4t,$ therefore $f'(x)$ is a decreasing function.	Now, we notice that $f'(t) = \frac{2}{t-1} -\frac{2}{t} > 0$, so $f'(x) > 0$ for all $x \in [2,t]$, so $f(x)$ is an increasing function and therefore the result follows.
\end{proof}

\section{The energy of a hypergraph is never odd}\label{odd}
In this section, we will prove that the energy of a hypergraph can never be an odd number. More precisely, we prove that the energy of a hypergraph can not be a $p$-th root of an odd number, and if it is a $p$-root of an even number then its even prime factors have power smaller than $p$. Similar results have already been obtained for graphs in \cite{energy-odd1, energy-odd2}.
\vspace{0.5cm}

We start by defining some operations between hypergraphs, and computing the spectrum of the resulting hypergraph.

\begin{Def}Let $\h$ and $\g$ be $k$-graphs. We define its \textit{sum}
	$\h\oplus\g$ as the $k$-graph, with the sets of vertices $\;V(\h\oplus\g) =
	V(\h)\times V(\g)$ and edges	$$\;E(\h\oplus\g) = \{\{v\}\times e:\; v \in
	V(\h),\; e \in E(\g)\}\cup \{f\times \{u\}:\; \; f \in E(\h), u \in V(\g)\}.$$
\end{Def}

\begin{Exe}
     Let $\h$ and $\g$ be $3$-graphs, such that
	$V(\h)=\{1,2,3,4\}$, $E(\h)=\{123, 234\}$ and $V(\g)=\{a,b,c\}$, $E(\g)=\{abc\}$. The sum $\h\oplus\g$ has the following sets of vertices and edges
	$$V(\h\oplus\g)=\{(1,a), (1,b), (1,c), (2,a), (2,b), (2,c), (3,a), (3,b), (3,c), (4,a), (4,b), (4,c)\}.$$
	$$E(\h\oplus\g)=\left\lbrace\begin{matrix}
	\{(1,a), (1,b), (1,c)\}, & \{(1,a), (2,a), (3,a)\},\\
	\{(2,a), (2,b), (2,c)\}, & \{(1,b), (2,b), (3,b)\},\\
	\{(3,a), (3,b), (3,c)\}, & \{(1,c), (2,c), (3,c)\},\\
	\{(4,a), (4,b), (4,c)\}, & \{(2,a), (3,a), (4,a)\},\\
	\{(2,b), (3,b), (4,b)\}, & \{(2,c), (3,c), (4,c)\}
\end{matrix}\right\rbrace_. $$
\end{Exe}
	
\begin{Pro}\label{prop:sum}
	If $\g$ and $\h$ are two $k$-graphs having eigenvalues
	$\mu$ and $\lambda$ with multiplicities $m_1$ and $m_2$, respectively, then $\mu+\lambda$ is an eigenvalue of $\A(\g\oplus\h)$, with
	multiplicity $m_1\cdot m_2$.
\end{Pro}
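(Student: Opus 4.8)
The plan is to identify the adjacency matrix of the sum as the \emph{Kronecker sum} of the two factor adjacency matrices, and then to read off the spectrum from the classical behaviour of Kronecker sums under tensoring of eigenvectors. Writing $n_\g=|V(\g)|$ and $n_\h=|V(\h)|$ and ordering $V(\g\oplus\h)=V(\g)\times V(\h)$ lexicographically, the key identity I would establish is
$$\A(\g\oplus\h)=\A(\g)\otimes\mathbf{I}_{n_\h}+\mathbf{I}_{n_\g}\otimes\A(\h).$$
Everything else in the statement is a formal consequence of this.

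To prove the identity I would compute, for two distinct product-vertices $(v_1,w_1)$ and $(v_2,w_2)$, the degree of the pair, i.e.\ the number of edges of $\g\oplus\h$ containing both. By the definition of the sum, every edge is either of the form $\{v\}\times e$ with $e\in E(\h)$ (constant in the first coordinate) or of the form $f\times\{u\}$ with $f\in E(\g)$ (constant in the second). An edge of the first type contains both vertices precisely when $v_1=v_2$ and $w_1,w_2\in e$, so such edges are counted by $d_\h(\{w_1,w_2\})=[\A(\h)]_{w_1w_2}$ and contribute nothing unless $v_1=v_2$; symmetrically, an edge of the second type forces $w_1=w_2$ and is counted by $d_\g(\{v_1,v_2\})=[\A(\g)]_{v_1v_2}$. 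Hence the $(v_1,w_1),(v_2,w_2)$ entry equals $\delta_{v_1v_2}[\A(\h)]_{w_1w_2}+\delta_{w_1w_2}[\A(\g)]_{v_1v_2}$, which is exactly the corresponding entry of the Kronecker sum (both sides vanish on the diagonal).

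Once the matrix identity is in hand the spectral claim is routine. If $(\mu,\x)$ is an eigenpair of $\A(\g)$ and $(\lambda,\y)$ an eigenpair of $\A(\h)$, the mixed-product rule $(\M\otimes\N)(\x\otimes\y)=(\M\x)\otimes(\N\y)$ gives
$$\bigl(\A(\g)\otimes\mathbf{I}_{n_\h}+\mathbf{I}_{n_\g}\otimes\A(\h)\bigr)(\x\otimes\y)=(\mu+\lambda)(\x\otimes\y),$$
so $\x\otimes\y$ is an eigenvector for $\mu+\lambda$. For the multiplicity I would take orthonormal bases $\x_1,\dots,\x_{m_1}$ and $\y_1,\dots,\y_{m_2}$ of the respective eigenspaces; the $m_1m_2$ products $\x_i\otimes\y_j$ are again orthonormal, hence span an $m_1m_2$-dimensional eigenspace for $\mu+\lambda$. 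Since $\A(\g)$ and $\A(\h)$ are real symmetric, carrying this out over complete orthonormal eigenbases yields $n_\g n_\h$ pairwise orthonormal eigenvectors, i.e.\ a full eigenbasis of $\A(\g\oplus\h)$ whose eigenvalues form the multiset $\{\mu_i+\lambda_j\}$; this confirms that the pair $(\mu,\lambda)$ contributes exactly the block of dimension $m_1m_2$. The only genuine content is the matrix identity of the second paragraph, so I expect the main (though modest) obstacle to be the combinatorial bookkeeping there: correctly arguing that a pair of product-vertices lies jointly only in product-edges that are constant in the coordinate where the two vertices agree, so that the two edge families contribute precisely the two Kronecker summands and nothing more.
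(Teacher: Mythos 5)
Your proof is correct, and it is organized differently from the paper's. The paper never states the matrix identity $\A(\g\oplus\h)=\A(\g)\otimes\mathbf{I}+\mathbf{I}\otimes\A(\h)$; instead it verifies the eigen-equation directly in coordinates: setting $z_{(v,u)}=y_vx_u$ it computes $(\A\z)_{(v,u)}=\sum_{e\in E_{[u]}}y_v\,x(e-u)+\sum_{a\in E_{[v]}}y(a-v)\,x_u=(\mu+\lambda)y_vx_u$, splitting the edges through $(v,u)$ into the two product families exactly as in your bookkeeping paragraph. So the combinatorial core is the same computation, packaged once as an entrywise degree count ($\delta_{v_1v_2}[\A(\h)]_{w_1w_2}+\delta_{w_1w_2}[\A(\g)]_{v_1v_2}$) and once as an evaluation of $\A\z$ on the tensor vector. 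What your route buys is the multiplicity claim: the paper exhibits one eigenvector per pair of eigenvectors and concludes ``the result follows,'' leaving the linear independence of the $m_1m_2$ vectors implicit, whereas your orthonormal tensor bases $\x_i\otimes\y_j$ prove it and additionally identify the full spectrum of $\A(\g\oplus\h)$ as the multiset $\{\mu_i+\lambda_j\}$. Your careful wording that the pair $(\mu,\lambda)$ ``contributes exactly the block of dimension $m_1m_2$'' is also the right reading, since when distinct pairs collide ($\mu'+\lambda'=\mu+\lambda$) the true multiplicity of the value exceeds $m_1m_2$; the proposition's own phrasing glosses over this. One small point you use implicitly and should state: since $k\geq 2$, an edge $\{v\}\times e$ has at least two distinct second coordinates while $f\times\{u\}$ has only one, so the two edge families are disjoint and your degree computation involves no double counting.
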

\begin{proof}
	Suppose $\x$ an eigenvector of $\lambda$ in $\A(\h)$ and  $\y$ an eigenvector
	of $\mu$ in $\A(\g)$. Consider $(v,u)$ a vertex of $\g\oplus\h$, define a
	vector $\mathbf{z}$ by $z_{(v,u)} = y_vx_u$. Thus,
	\[(\A\mathbf{z})_{(v,u)}=\!\!\!\sum_{\alpha \in E_{[(v,u)]}}\!\!\!z(\alpha - (v,u)) = \sum_{e \in E_{[u]}}y_vx(e-u) + \sum_{a \in E_{[v]}}y(a-v)x_u  = (\mu+\lambda)y_vx_u.\]
	Therefore, the result follows.
\end{proof}
	
\begin{Def}
	Let $\h$ and $\g$ be $k$-graphs.
	\begin{enumerate}
		\item For each edge $e \in E(\h)$, we say that a sequence
		$\alpha=(v_1,v_2,\ldots,v_k)$ is an \textit{ordered edge} from $e$, if the set 	of its elements is equal to the edge $e$.
		
		\item For each edge $e \in E(\h)$, we denote by $S_\h(e)$ the set of all ordered edges from $e$.
		
		\item Let $e\in E(\h)$ and $f \in E(\g)$. For $\alpha=(v_1,\ldots,v_k) \in S_\h(e)$ and  $\beta=(u_1,\ldots,u_k) \in S_\g(f)$, we define its \textit{product} as the following set of ordered pairs $$\alpha\otimes\beta=\{(v_1,u_1),\ldots,(v_k,u_k)\}.$$
		
		\item We define the \textit{product} $\h\otimes\g$ as the $k$-graph, with the sets of vertices $\;V(\h\otimes\g) =  V(\h)\times V(\g)$ and edges
		$$\;E(\h\otimes\g) = \{\alpha\otimes\beta:\; \alpha\in S_\h(e), \textrm{ where } e\in E(\h), \textrm{ and } \beta\in S_\g(f), \textrm{ where } f\in E(\g).\}$$
	\end{enumerate}
\end{Def}

\begin{Exe} Let $\h$ and $\g$ be $3$-graphs, such that
	$V(\h)=\{1,2,3,4\}$, $E(\h)=\{123, 234\}$ and $V(\g)=\{a,b,c\}$, $E(\g)=\{abc\}$. The product $\h\otimes\g$ has the following sets of vertices and edges	
	$$V(\h\otimes\g)=\{(1,a), (1,b), (1,c), (2,a), (2,b), (2,c), (3,a), (3,b), (3,c), (4,a), (4,b), (4,c)\}.$$	
	$$E(\h\otimes\g)=\left\lbrace\begin{matrix}
	\{(1,a), (2,b), (3,c)\}, & \{(2,a), (3,b), (4,c)\},\\
	\{(1,a), (2,c), (3,b)\}, & \{(2,a), (3,c), (4,b)\},\\
	\{(1,b), (2,a), (3,c)\}, & \{(2,b), (3,a), (4,c)\},\\
	\{(1,b), (2,c), (3,a)\}, & \{(2,b), (3,c), (4,a)\},\\
	\{(1,c), (2,a), (3,b)\}, & \{(2,c), (3,a), (4,b)\},\\
	\{(1,c), (2,b), (3,a)\}, & \{(2,c), (3,b), (4,a)\}
\end{matrix}\right\rbrace_. $$
\end{Exe}	

\begin{Pro}\label{prop:prod}
	If $\g$ and $\h$ are two $k$-graphs, with eigenvalues
	$\mu$ of multiplicity $m_1$ and  $\lambda$ of multiplicity $m_2$
	respectively, then $\mu\lambda$ is an eigenvalue of $\A(\g\otimes\h)$, with
	multiplicity $m_1\cdot m_2$.
\end{Pro}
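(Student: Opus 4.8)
The plan is to imitate the proof of Proposition~\ref{prop:sum}, replacing the additive combination of eigenvectors by a multiplicative (tensor) one. Let $\y$ be an eigenvector of $\mu$ in $\A(\g)$ and $\x$ an eigenvector of $\lambda$ in $\A(\h)$, and define $\z$ on $V(\g\otimes\h)=V(\g)\times V(\h)$ by $z_{(v,u)}=y_v x_u$. Using the identity $(\A\z)_{(v,u)}=\sum_{\gamma\in E_{[(v,u)]}}z(\gamma\setminus\{(v,u)\})$, I would compute $\A(\g\otimes\h)\z$ entrywise and try to factor it as a scalar times $\z$; the two spectral facts I will feed in at the end are $\sum_{e\in E_{[v]}}y(e\setminus\{v\})=(\A(\g)\y)_v=\mu y_v$ and $\sum_{f\in E_{[u]}}x(f\setminus\{u\})=(\A(\h)\x)_u=\lambda x_u$.

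The main obstacle is describing the edges of $\g\otimes\h$ well enough to evaluate that sum, and it is here that the product differs essentially from the sum. An edge $\alpha\otimes\beta$ does not remember the orderings $\alpha,\beta$, only the bijection $\phi\colon e\to f$ between base edges $e\in E(\g)$, $f\in E(\h)$ that it induces (all $k!$ choices of $\alpha$, with $\beta$ forced, give the same set); so the edges of $\g\otimes\h$ are in bijection with triples $(e,f,\phi)$, and such an edge contains $(v,u)$ iff $v\in e$, $u\in f$, $\phi(v)=u$. The crux is then a counting step: for fixed $e\ni v$, $f\ni u$, $w\in e\setminus\{v\}$ and a target $u'\in f\setminus\{u\}$, exactly $(k-2)!$ bijections satisfy $\phi(v)=u$ and $\phi(w)=u'$. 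Performing the summation with this weight collapses the expression to $(\A(\g\otimes\h)\z)_{(v,u)}=(k-2)!\,\mu\lambda\,z_{(v,u)}$. Equivalently and more transparently, the same entry count shows $\A(\g\otimes\h)=(k-2)!\,\bigl(\A(\g)\otimes\A(\h)\bigr)$, the Kronecker product, after which the spectral claim is just the classical eigenvalue rule for Kronecker products. I would flag that this constant $(k-2)!$ is $1$ precisely for $k\le 3$ (recovering, at $k=2$, the usual tensor-product spectrum of graphs), so in general the eigenvalue produced is $(k-2)!\,\mu\lambda$ and the displayed statement is the $k\le 3$ reading; reconciling the constant with the assertion as stated is the point that needs care.

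Finally, for the multiplicity I would fix bases $\y_1,\dots,\y_{m_1}$ and $\x_1,\dots,\x_{m_2}$ of the two eigenspaces and consider the $m_1m_2$ vectors with entries $(\y_i)_v(\x_j)_u$. These are linearly independent, being a tensor product of two independent families, and by the computation above they all lie in the eigenspace of $(k-2)!\,\mu\lambda$; hence that eigenvalue has multiplicity at least $m_1m_2$. As already happens for Proposition~\ref{prop:sum} with the value $\mu+\lambda$, the value $\mu\lambda$ might also be produced by other eigenvalue pairs of the factors, so ``multiplicity $m_1m_2$'' should be understood as the contribution of this particular pair; summing the contributions over all pairs exhausts the whole space and thereby pins down the exact multiplicities.
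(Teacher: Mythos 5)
Your proposal follows the same basic strategy as the paper's own proof --- both test the product vector $z_{(v,u)}=y_vx_u$ entrywise against the adjacency matrix --- but you carry out the one genuinely combinatorial step that the paper skips, and your count is right where the paper's is wrong. The paper's proof rests on the displayed identity
\[\sum_{\alpha \in E_{[(v,u)]}} z\bigl(\alpha-(v,u)\bigr) \;=\; \sum_{a \in E_{[v]}}\sum_{e \in E_{[u]}} y(a-v)\,x(e-u),\]
and your edge--bijection dictionary shows this is off by exactly the factor you isolated: an edge of $\g\otimes\h$ is the graph of a bijection $\phi\colon a\to e$ between base edges, it contains $(v,u)$ iff $\phi(v)=u$, and for fixed $w\in a-\{v\}$ and $u'\in e-\{u\}$ there are $(k-2)!$ such bijections with $\phi(w)=u'$; hence the left side is $(k-2)!$ times the right side, equivalently $\A(\g\otimes\h)=(k-2)!\,\A(\g)\otimes\A(\h)$ as Kronecker product. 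So the proposition as printed is correct only for $k\in\{2,3\}$, where $(k-2)!=1$; for $k\geq 4$ the eigenvalue actually produced is $(k-2)!\,\mu\lambda$, and the statement is genuinely false. A concrete witness: take $\g=\h$ to be the $4$-graph consisting of a single edge on four vertices, so $\mu=\lambda=3$; the product has adjacency matrix $2\bigl((\mathbf{J}_4-\mathbf{I}_4)\otimes(\mathbf{J}_4-\mathbf{I}_4)\bigr)$ with spectrum $\{18,\,(-6)^{(6)},\,2^{(9)}\}$, which does not contain $\mu\lambda=9$. Your treatment of multiplicity is also sounder than the paper's: the paper exhibits a single eigenvector and asserts the multiplicity, whereas you correctly note that the tensor basis gives $m_1m_2$ independent eigenvectors (a contribution, i.e.\ a lower bound, made exact by the Kronecker identity after summing over all pairs with the same product). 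One consequence worth flagging: Theorem \ref{teo:neverodd} invokes Proposition \ref{prop:prod} to conclude that $\bigl(\sum_i\lambda_i\bigr)^p$ is an eigenvalue of some hypergraph; with the corrected proposition one only obtains $\bigl((k-2)!\bigr)^{p-1}\bigl(\sum_i\lambda_i\bigr)^p$, so the $2$-adic contradiction in that proof needs rechecking for $k\geq 4$ (the extra factor can absorb the denominator $2^{p-q}$). In short: no gap on your side --- your ``point that needs care'' is an actual error in the paper's proof, which silently assumed the sum-case bookkeeping of Proposition \ref{prop:sum}, where no ordering factor arises.
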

\begin{proof}
	Suppose $\x$ an eigenvector of $\lambda$ in $\A(\h)$ and  $\y$ an eigenvector
	of $\mu$ in $\A(\g)$. Consider $(v,u)$ a vertex of $\g\otimes\h$, define a
	vector $\mathbf{z}$ by $z_{(v,u)} = y_vx_u$. Thus,
	\[(\A\mathbf{z})_{(v,u)}=\!\!\!\sum_{\alpha \in E_{[(v,u)]}}\!\!\!z(\alpha - (v,u)) = \sum_{a \in E_{[v]}}\sum_{e \in E_{[u]}}y(a-v)x(e-u)  = \mu\lambda y_vx_u.\]
	Therefore, the result is true.
\end{proof}

\begin{Lem}\label{lem}
	Let $\h$ be a $k$-graph. If $\lambda_1,\ldots,\lambda_t$ are all positive eigenvalues of $\A(\h)$, then $\E(\h) = 2\sum_{i=1}^t\lambda_i$
\end{Lem}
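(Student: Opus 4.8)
The plan is to exploit the single structural fact that the adjacency matrix of a hypergraph has a zero diagonal, which forces the trace to vanish. Recall from the definition in Section~\ref{sec:pre} that $a_{ii}=0$ for every vertex $i$, so $\operatorname{tr}(\A(\h))=0$. Since the trace of a square matrix equals the sum of its eigenvalues, this immediately gives $\sum_{i=1}^n \lambda_i = 0$.

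From here I would split the list of eigenvalues according to sign. Write $\lambda_1,\ldots,\lambda_t$ for the positive eigenvalues and let the remaining eigenvalues be nonpositive. Because $\A(\h)$ is real symmetric, the energy is the sum of absolute values of the eigenvalues, so
\[
\E(\h) = \sum_{i=1}^n |\lambda_i| = \sum_{\lambda_i > 0} \lambda_i - \sum_{\lambda_i < 0} \lambda_i,
\]
where the zero eigenvalues contribute nothing. The vanishing of the trace means $\sum_{\lambda_i > 0}\lambda_i = -\sum_{\lambda_i < 0}\lambda_i$, that is, the total mass of the positive part equals that of the negative part.

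Substituting this identity into the displayed expression replaces the negative sum by the positive sum, yielding
\[
\E(\h) = \sum_{\lambda_i > 0}\lambda_i + \sum_{\lambda_i > 0}\lambda_i = 2\sum_{i=1}^t \lambda_i,
\]
which is the claimed equality. There is essentially no obstacle here: the only point requiring care is the bookkeeping of zero eigenvalues, which drop out of both the energy sum and the trace sum and therefore do not affect the balance between the positive and negative parts. The entire argument rests on the zero-diagonal observation, so I would state that step first and treat the rest as a short computation.
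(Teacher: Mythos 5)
Your proof is correct and follows exactly the paper's argument: the paper's own proof is the one-line observation that the trace of $\A(\h)$ is zero and equals the sum of the eigenvalues, which is precisely the zero-diagonal fact you start from. Your write-up simply makes explicit the sign-splitting and the bookkeeping of zero eigenvalues that the paper leaves implicit.
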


\begin{proof}This follows, because the trace of the adjacency matrix is zero and equals the sum of the eigenvalues. \end{proof}

\begin{Obs}\label{obs}
	We notice that the characteristic polynomial of $\h$ is a monic polynomial with integer coefficients. Thus, if an eigenvalue of $\h$ is a rational number, then it has to be an integer.
\end{Obs}

\begin{Teo}\label{teo:even}
	Let $\h$ be a $k$-graph. If $\E(\h)$ is a rational number, then $\E(\h)$ is even.
\end{Teo}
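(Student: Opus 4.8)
The plan is to leverage the two structural facts already assembled in this section, namely Lemma~\ref{lem} and Remark~\ref{obs}. By Lemma~\ref{lem}, if $\lambda_1,\ldots,\lambda_t$ are the positive eigenvalues of $\A(\h)$, then $\E(\h)=2\sum_{i=1}^t\lambda_i$. The hypothesis is that $\E(\h)$ is rational, so I want to conclude that $\sum_{i=1}^t\lambda_i$ is itself a rational number, whence $\E(\h)$ is twice a rational, and in fact an even integer.

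First I would argue that $\sum_{i=1}^t\lambda_i=\tfrac{1}{2}\E(\h)$ is rational, which is immediate from the displayed identity. The key step is then to upgrade ``rational'' to ``integer.'' Here I would invoke the fact that the eigenvalues of $\A(\h)$ are the roots of the monic integer characteristic polynomial $P_\A(\lambda)$, so they form a set of algebraic integers that is closed under the action of the Galois group of the splitting field of $P_\A$ over $\mathbb{Q}$. The quantity $\sigma:=\sum_{i=1}^t\lambda_i$ is the sum of exactly the positive roots; since it is assumed rational and is an algebraic integer (a sum of algebraic integers), Remark~\ref{obs}-type reasoning forces it to be a rational algebraic integer, hence an ordinary integer. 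Consequently $\E(\h)=2\sigma$ is an even integer.

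The main obstacle to watch is the passage from ``$\E(\h)$ rational'' to ``$\sigma=\tfrac12\E(\h)$ is an algebraic integer that is rational.'' The cleanest route is: $\sigma$ is a sum of algebraic integers, hence an algebraic integer; it is assumed to equal $\tfrac12\E(\h)\in\mathbb{Q}$; a rational algebraic integer is an integer; therefore $\sigma\in\mathbb{Z}$ and $\E(\h)=2\sigma$ is even. One subtlety is that Remark~\ref{obs} as stated concerns an individual eigenvalue being an algebraic integer, so I would either restate it for the partial sum $\sigma$ (the sum of algebraic integers is again an algebraic integer, by standard ring-closure of the algebraic integers) or simply note that $\sigma$ is a symmetric-type integer combination handled by the same monic-polynomial argument.

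An alternative I would keep in reserve, in case one prefers to avoid Galois-theoretic language entirely, is to split the eigenvalues into rational (hence integer, by Remark~\ref{obs}) and irrational parts, observe that the irrational eigenvalues come in conjugate families whose positive members contribute symmetrically, and track that the total $\sum|\lambda_i|$ inherits integrality from the trace being $0$ together with $\E(\h)$ rational. Either way the decisive ingredient is that the algebraic integers form a ring and that a rational algebraic integer is an integer, so that $\tfrac12\E(\h)\in\mathbb{Z}$.
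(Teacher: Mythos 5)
Your proof is correct, but it takes a genuinely different route from the paper's. Where you invoke the ring structure of the algebraic integers --- each $\lambda_i$ is a root of the monic integer polynomial $P_\A$, hence an algebraic integer; the partial sum $\sigma=\sum_{i=1}^t\lambda_i$ is therefore an algebraic integer; a rational algebraic integer is an ordinary integer, so $\E(\h)=2\sigma$ is even --- the paper stays entirely inside its own combinatorial machinery: it applies Proposition~\ref{prop:sum} (iterated, to the hypergraph sum $\h\oplus\cdots\oplus\h$) to realize $\sigma$ as an \emph{eigenvalue of an actual hypergraph}, and then Remark~\ref{obs}, i.e.\ the rational root theorem for the monic integer characteristic polynomial of that hypergraph, yields $\sigma\in\mathbb{Z}$. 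The two routes prove integrality of $\sigma$ by exhibiting the same kind of certificate (a monic integer polynomial vanishing at $\sigma$), but you get it from standard algebraic number theory while the paper constructs it combinatorially; your version is shorter and needs none of the sum/product constructions, whereas the paper's constructions are reused immediately in Theorem~\ref{teo:neverodd}, where $\big(\sum_i\lambda_i\big)^p$ must also be handled --- something your approach would dispatch just as easily via closure of the algebraic integers under products. One caution: your opening appeal to Galois-orbit language is not only unnecessary but slightly misleading, since the set of \emph{positive} roots is not Galois-stable (conjugation does not respect sign), and the same defect makes your ``alternative in reserve'' about conjugate families contributing symmetrically unsound as stated; fortunately your actual argument never uses either, resting only on ring closure plus ``rational algebraic integer $\Rightarrow$ integer,'' which is exactly right.
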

\begin{proof}
	Let $\lambda_1,\ldots,\lambda_t$ be all positive eigenvalues of $\h$, so $\E(\h) = 2\sum_{i=1}^t\lambda_i$, that is $\sum_{i=1}^t\lambda_i$ is a rational number. By Proposition \ref{prop:sum}, we have that $\sum_{i=1}^t\lambda_i$ is an eigenvalue of a hypergraph and by Remark \ref{obs} we conclude that it must be an integer, therefore $\E(\h) = 2\sum_{i=1}^t\lambda_i$ must be an even number.	
\end{proof}

\begin{Teo}\label{teo:neverodd}
	Let $p$ and $q$ be integers such that $p \geq 1$ and $0 \leq q \leq p-1$ and $t$ be	an odd integer. If $\h$ is a $k$-graph, then $\E(\h) \neq \sqrt[p]{2^qt}$.
\end{Teo}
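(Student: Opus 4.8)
The plan is to argue by contradiction: assuming $\E(\h) = \sqrt[p]{2^q t}$, I will manufacture from this hypothesis a \emph{rational} eigenvalue of some $k$-graph that fails to be an integer, contradicting Remark \ref{obs}. Since $\E(\h) \geq 0$ and $t$ is odd (hence nonzero), raising the assumed identity to the $p$-th power records the clean relation $\E(\h)^p = 2^q t$, which incidentally forces $t > 0$ and $\E(\h) \neq 0$.

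First I would reduce the energy to a single eigenvalue. Exactly as in the proof of Theorem \ref{teo:even}, I write $\E(\h) = 2S$, where by Lemma \ref{lem} the number $S = \sum_{i=1}^s \lambda_i$ is the sum (with multiplicity) of the positive eigenvalues of $\A(\h)$; since $\E(\h) \neq 0$ we have $s \geq 1$. Iterating Proposition \ref{prop:sum} over $s$ copies of $\h$ and selecting the eigenvalue $\lambda_j$ from the $j$-th copy, the quantity $S$ is realized as an eigenvalue of the $k$-graph $\h \oplus \cdots \oplus \h$; call this $k$-graph $\g$.

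The crucial step is then to pass to the $p$-th power using the product operation. Since $S$ is an eigenvalue of the $k$-graph $\g$, iterating Proposition \ref{prop:prod} over $p$ copies of $\g$ shows that $S^p$ is an eigenvalue of the $k$-graph $\g \otimes \cdots \otimes \g$ ($p$ factors). I then compute
\[
S^p = \left(\frac{\E(\h)}{2}\right)^p = \frac{\E(\h)^p}{2^p} = \frac{2^q t}{2^p} = \frac{t}{2^{\,p-q}}.
\]
Because $t$ is odd and $p - q \geq 1$, the number $t/2^{\,p-q}$ is a rational that is not an integer. But $S^p$ is a rational eigenvalue of a $k$-graph, so Remark \ref{obs} forces it to be an integer, which is the desired contradiction.

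I expect no analytic obstacle here; once both operations are in hand the argument is algebraic bookkeeping. The one genuinely creative move, and the step I would flag as the heart of the proof, is recognizing that $\otimes$ converts the single realizable eigenvalue $S = \E(\h)/2$ into its powers $S^p$, thereby transporting the hypothesis $\E(\h)^p = 2^q t$ into a statement about the $2$-adic valuation of an eigenvalue. The constraint $q \leq p-1$ is precisely what guarantees that the $2$-adic valuation of $S^p$ equals $q-p < 0$, i.e. that $S^p$ is a non-integer rational; matching this valuation condition against the integrality constraint of Remark \ref{obs} is the entire content of the theorem. This also makes clear that the result sharpens Theorem \ref{teo:even}, which is recovered as the case $p=1$, $q=0$.
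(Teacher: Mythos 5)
Your proof is correct and follows essentially the same route as the paper's: raise the assumed identity to the $p$-th power, use Lemma \ref{lem} together with iterated applications of Propositions \ref{prop:sum} and \ref{prop:prod} to realize $\left(\E(\h)/2\right)^p = t/2^{\,p-q}$ as a rational, non-integral eigenvalue of a $k$-graph, contradicting Remark \ref{obs}. If anything, you are more careful than the paper: you make explicit the iteration over copies needed to invoke the two propositions, note the nondegeneracy $\E(\h)\neq 0$, and avoid the paper's notational clash of using $t$ both for the odd integer and for the number of positive eigenvalues.
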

\begin{proof}
	Suppose by contradiction that $\E(\h) = \sqrt[p]{2^qt}$.  In this way, we have $$\left( \E(\h)\right)^p  = \left( 2\sum_{i=1}^t\lambda_i\right) ^p = 2^qt \Rightarrow \left( \sum_{i=1}^t\lambda_i\right) ^p = \frac{t}{2^{p-q}}.$$
	By Proposition \ref{prop:sum} and \ref{prop:prod}, we have that $\left(\sum_{i=1}^t\lambda_i\right)^p$ is an eigenvalue of a hypergraph, however it is a  non integral rational number, which is a contradiction.
\end{proof}

\section{Vertex and edge deletion}\label{deletion}
In this section we will study the variation of the energy of a hypergraph when we delete a vertex or an edge of the hypergraph.

\begin{Def}
	Let $\h=(V,E)$ be a hypergraph, $v\in V$ be a vertex and $e_1,\ldots,e_d$ be all edges containing $v$, we define $\h-v$ by $V(\h-v) =V(\h)-\{v\}$ and $$E(\h-v) = (E(\h)-\{e_1,\ldots,e_d\})\cup\{e_1-\{v\},\ldots,e_d-\{v\}\}.$$
\end{Def}

\begin{Teo}
	Let $\h$ be a hypergraph and $v \in V(\h)$ be a vertex, then
	$$\E(\h) \geq \E(\h-v).$$
\end{Teo}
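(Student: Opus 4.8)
The plan is to realize $\A(\h-v)$ as a principal submatrix of $\A(\h)$ and then combine Cauchy interlacing with the zero-trace identity already recorded in Lemma \ref{lem}.

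First I would verify that, at the level of matrices, deleting $v$ from the hypergraph amounts to deleting the row and column indexed by $v$ from $\A(\h)$. Fix two vertices $i,j \in V(\h)\setminus\{v\}$ with $i \neq j$. An edge $e$ of $\h$ contains both $i$ and $j$ if and only if $e-\{v\}$ contains both $i$ and $j$, since neither $i$ nor $j$ equals $v$; hence the number of edges through $\{i,j\}$ is unchanged when passing from $\h$ to $\h-v$. Thus $d_{\h-v}(\{i,j\}) = d_\h(\{i,j\})$, and since the diagonal entries stay $0$, the matrix $\A(\h-v)$ is exactly the principal submatrix $\A'$ of $\A(\h)$ obtained by removing the row and column of $v$. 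The one point needing care is that removing $v$ must not cause two distinct edges to coincide, which could lower an adjacency multiplicity; under the multi-hypergraph convention, or whenever no such collapse occurs, the identity $\A(\h-v)=\A'$ is exact.

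Next, write $\lambda_1 \geq \cdots \geq \lambda_n$ for the eigenvalues of $\A(\h)$ and $\mu_1 \geq \cdots \geq \mu_{n-1}$ for those of $\A'=\A(\h-v)$. Since $\A'$ is an $(n-1)\times(n-1)$ principal submatrix of the symmetric matrix $\A(\h)$, Cauchy interlacing yields $\mu_i \leq \lambda_i$ for every $i$. Both matrices have zero diagonal, hence zero trace, so the argument of Lemma \ref{lem} applies to each: if $p$ denotes the number of positive $\mu_i$, then $\E(\h-v) = 2\sum_{i=1}^{p}\mu_i$ while $\E(\h) = 2\sum_{\lambda_i>0}\lambda_i$. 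By interlacing, $\sum_{i=1}^{p}\mu_i \leq \sum_{i=1}^{p}\lambda_i$. Letting $p'$ be the number of positive $\lambda_i$, if $p \leq p'$ then $\sum_{i=1}^p \lambda_i \leq \sum_{i=1}^{p'}\lambda_i$ because the extra terms are positive, while if $p > p'$ the terms $\lambda_{p'+1},\dots,\lambda_p$ are nonpositive so again $\sum_{i=1}^p \lambda_i \leq \sum_{i=1}^{p'}\lambda_i$. In either case $\sum_{i=1}^{p}\mu_i \leq \sum_{\lambda_i>0}\lambda_i$, giving $\E(\h-v)\leq\E(\h)$.

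The step I expect to be the main obstacle is the first one: making the identification $\A(\h-v)=\A'$ fully rigorous for a \emph{simple} hypergraph, since the set-union in the definition of $E(\h-v)$ can in principle merge $e_\ell-\{v\}$ with an already-present edge and thereby decrease an adjacency multiplicity. To sidestep this subtlety one may replace interlacing by the variational characterization $\|\M\|_* = \max_{\|\mathbf{X}\|_{\mathrm{op}}\leq 1}\mathrm{tr}(\M^{\top}\mathbf{X})$ of the nuclear norm: padding an optimal $\mathbf{X}$ for the submatrix with zeros shows directly that the sum of singular values of any submatrix is at most that of the full matrix, which for a real symmetric matrix is exactly its energy. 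The interlacing route above is, however, the more self-contained one given the tools developed in the paper.
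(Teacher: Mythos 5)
Your proposal is correct and takes essentially the same route as the paper: identify $\A(\h-v)$ as a principal submatrix of $\A(\h)$ and apply Cauchy interlacing, the only cosmetic difference being that the paper interlaces both tails of the spectrum (top positive and bottom negative eigenvalues) while you interlace only the positive ones and close the argument with the trace-zero identity of Lemma \ref{lem}. Your attention to the multi-hypergraph convention (the possibility that $e-\{v\}$ merges with an existing edge or drops below cardinality $2$) addresses a subtlety the paper's proof passes over silently, and your resolution of it is sound.
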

\begin{proof}
	First notice that, $\A(\h-v)$ is a principal sub-matrix of $\A(\h)$. Let $\mu_1,\ldots,\mu_t$ be the positive eigenvalues of $\A(\h-v)$ and let $\mu_{t+1},\ldots,\mu_{n-1}$  be the non positive eigenvalues of $\A(\h-v)$. From the Interlace Theorem (see section 6.4 in \cite{linear-alg}), we have that
	$$\lambda_1(\h)\geq\mu_1, \quad\lambda_2(\h)\geq\mu_2,\cdots,\lambda_{t}(\h)\geq\mu_t.$$	
	$$|\lambda_n(\h)|\geq|\mu_{n-1}|, \quad|\lambda_{n-1}(\h)|\geq|\mu_{n-2}|,\cdots,|\lambda_{t+2}(\h)|\geq|\mu_{t+1}|.$$
	
	Therefore, $\E(\h) \geq \E(\h-v).$	
\end{proof}

Now, we will  bound the variation in energy when we delete an edge in the hypergraph.\\

The \textit{energy} of a matrix $\M$ is a generalization of the energy of graphs, it is computed as the sum of its singular values. In particular, if $\M$ is a real and symmetric square matrix, its energy is also the sum of the absolute values of the eigenvalues.

\begin{Lem}[Lemma 2.21, \cite{energy-lucelia}]\label{lem:energy-subgrafo}
	If $\M$ and $\mathbf{N}$ are square matrices, then
	$$\E(\M+\mathbf{N}) \leq \E(\M) + \E(\mathbf{N}), \quad |\E(\M) - \E(\mathbf{N})| \leq  \E(\M-\mathbf{N}). $$
\end{Lem}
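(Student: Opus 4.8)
The plan is to recognize that $\E(\M)$ is exactly the nuclear (trace) norm $\sum_{i}\sigma_i(\M)$, the sum of the singular values, and to prove that this is a norm; both stated inequalities are then instances of the triangle inequality together with the sign-invariance of singular values. The central tool I would use is the Ky Fan variational characterization of partial sums of singular values: for an $n\times n$ real matrix $\M$ and each $1\le k\le n$,
\[
\sum_{i=1}^{k}\sigma_i(\M)=\max\Bigl\{\operatorname{tr}\bigl(\mathbf{U}^{\top}\M\,\mathbf{V}\bigr):\ \mathbf{U},\mathbf{V}\in\mathbb{R}^{n\times k},\ \mathbf{U}^{\top}\mathbf{U}=\mathbf{V}^{\top}\mathbf{V}=\mathbf{I}_k\Bigr\}.
\]
Taking $k=n$ recovers the full energy $\E(\M)=\sum_{i=1}^{n}\sigma_i(\M)$ as a maximum of the linear functional $(\mathbf{U},\mathbf{V})\mapsto\operatorname{tr}(\mathbf{U}^{\top}\M\,\mathbf{V})$ over pairs of matrices with orthonormal columns. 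This representation is the real engine of the proof, and I expect establishing it to be the main obstacle: it follows from the singular value decomposition $\M=\mathbf{W}\,\Sigma\,\mathbf{Z}^{\top}$ by observing that the optimal $\mathbf{U},\mathbf{V}$ align with the leading left/right singular subspaces, but making the optimality rigorous requires a short argument that the maximizing configuration is attained on the top $k$ singular directions.

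For the first inequality I would fix a maximizing pair $(\mathbf{U},\mathbf{V})$ for $\M+\N$ and exploit linearity of the trace:
\[
\E(\M+\N)=\operatorname{tr}\bigl(\mathbf{U}^{\top}(\M+\N)\mathbf{V}\bigr)=\operatorname{tr}\bigl(\mathbf{U}^{\top}\M\,\mathbf{V}\bigr)+\operatorname{tr}\bigl(\mathbf{U}^{\top}\N\,\mathbf{V}\bigr).
\]
Since $(\mathbf{U},\mathbf{V})$ is an admissible (not necessarily optimal) pair for each of $\M$ and $\N$ separately, each summand is bounded above by the corresponding maximum, giving $\operatorname{tr}(\mathbf{U}^{\top}\M\,\mathbf{V})\le\E(\M)$ and $\operatorname{tr}(\mathbf{U}^{\top}\N\,\mathbf{V})\le\E(\N)$, hence $\E(\M+\N)\le\E(\M)+\E(\N)$. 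This subadditivity is precisely the statement that the energy satisfies the triangle inequality.

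The second inequality then follows formally, with no new analytic input. Writing $\M=(\M-\N)+\N$ and applying the first inequality yields $\E(\M)\le\E(\M-\N)+\E(\N)$, that is $\E(\M)-\E(\N)\le\E(\M-\N)$. Interchanging the roles of $\M$ and $\N$ gives $\E(\N)-\E(\M)\le\E(\N-\M)$, and since the singular values of a matrix are unchanged under multiplication by $-1$ we have $\E(\N-\M)=\E(\M-\N)$. Combining the two bounds produces $|\E(\M)-\E(\N)|\le\E(\M-\N)$, completing the argument. The only nontrivial ingredient is the Ky Fan characterization; once it is in hand, everything reduces to linearity of the trace and the symmetry $\sigma_i(-\M)=\sigma_i(\M)$.
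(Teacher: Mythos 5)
Your proof is correct, but note that the paper itself does not prove this lemma at all: it is imported verbatim as Lemma 2.21 of the cited reference \cite{energy-lucelia}, so there is no in-paper argument to compare against. What you have written is a complete, self-contained proof along the standard lines found in that literature (Ky Fan's singular-value inequalities, as used e.g.\ in the edge-deletion paper \cite{energy-edge-deletion}): represent $\E(\M)=\sum_i\sigma_i(\M)$ via the variational formula, get subadditivity from linearity of the trace, and get the second inequality formally from the first together with $\sigma_i(-\M)=\sigma_i(\M)$. One remark on the step you flagged as the ``main obstacle'': since you only ever invoke the characterization with $k=n$, the optimality argument is easier than you suggest and needs no discussion of leading singular subspaces. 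Writing $\M=\mathbf{W}\Sigma\mathbf{Z}^{\top}$ and using cyclicity of the trace,
\[
\operatorname{tr}\bigl(\mathbf{U}^{\top}\M\mathbf{V}\bigr)=\operatorname{tr}\bigl(\Sigma\,\mathbf{Z}^{\top}\mathbf{V}\mathbf{U}^{\top}\mathbf{W}\bigr)=\sum_{i=1}^{n}\sigma_i\,q_{ii},
\]
where $\mathbf{Q}=\mathbf{Z}^{\top}\mathbf{V}\mathbf{U}^{\top}\mathbf{W}$ is orthogonal, so $|q_{ii}|\le 1$ and the sum is at most $\sum_i\sigma_i$, with equality at $\mathbf{U}=\mathbf{W}$, $\mathbf{V}=\mathbf{Z}$; existence of a maximizer is immediate from compactness of the orthogonal group. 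With that three-line verification in place, your argument is airtight and arguably more elementary than citing the general Ky Fan $k$-norm machinery.
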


\begin{Def}
	Let $\h=(V,E)$ be a hypergraph and $e\in E$ be an edge, we define $\h-e = (V, E-\{e\})$.
\end{Def}

\begin{Teo}
	Let $\h$ be a hypergraph and $e \in E(\h)$ be an edge, then
	$$|\E(\h) - \E(\h-e)| \leq 2|e|-2.$$
\end{Teo}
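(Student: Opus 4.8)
The plan is to view edge deletion as a controlled additive perturbation of the adjacency matrix and then apply the second inequality of Lemma \ref{lem:energy-subgrafo}. First I would form the difference matrix $\B := \A(\h) - \A(\h-e)$ and describe it explicitly. Since the adjacency entries record codegrees, namely $a_{ij} = d(\{i,j\})$ for $i \neq j$, deleting the single edge $e$ decreases the codegree of every pair $\{i,j\}$ with $i,j \in e$ and $i \neq j$ by exactly one, and leaves all other codegrees unchanged. Hence $\B$ carries a $1$ in each off-diagonal position $(i,j)$ with $i,j \in e$ and zeros everywhere else; after a relabelling that lists the vertices of $e$ first, $\B$ is the adjacency matrix of the complete graph $K_{|e|}$ bordered by a zero block arising from the remaining $n-|e|$ vertices.

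Next I would invoke Lemma \ref{lem:energy-subgrafo} with $\M = \A(\h)$ and $\mathbf{N} = \A(\h-e)$, which gives
$$|\E(\h) - \E(\h-e)| = |\E(\M) - \E(\mathbf{N})| \leq \E(\M - \mathbf{N}) = \E(\B).$$
It then remains only to compute $\E(\B)$. The zero border contributes $n-|e|$ null eigenvalues, so $\E(\B)$ equals the energy of the adjacency matrix of $K_{|e|}$. Its spectrum is classical: writing this matrix as $J - I$ on the $|e|$ vertices of $e$, the eigenvalues are $|e|-1$ with multiplicity one and $-1$ with multiplicity $|e|-1$, whence $\E(\B) = (|e|-1) + (|e|-1) = 2|e|-2$. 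Combining this with the displayed inequality yields $|\E(\h) - \E(\h-e)| \leq 2|e|-2$, as claimed.

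I do not anticipate a genuine obstacle; the only point requiring care is the exact identification of the perturbation $\B$. One must verify that no codegree $d(\{i,j\})$ involving a vertex outside $e$ is affected (it is not, because $e$ is the unique edge removed) and that each codegree of a pair inside $e$ drops by precisely one rather than more (again because exactly one edge disappears). Once this description is secured, the energy of $\B$ reduces to the textbook computation for $K_{|e|}$, and the bound is immediate.
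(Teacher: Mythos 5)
Your proposal is correct and follows essentially the same route as the paper: both bound $|\E(\h) - \E(\h-e)|$ by $\E\left(\A(\h) - \A(\h-e)\right)$ via Lemma \ref{lem:energy-subgrafo}, identify the difference matrix as the adjacency matrix of $K_{|e|}$ padded with zeros, and read off the spectrum $\{|e|-1,\; (-1)^{|e|-1}\}$ to get $2|e|-2$. Your explicit verification that only codegrees of pairs inside $e$ drop, and by exactly one, is a point the paper leaves implicit, but it is the same argument.
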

\begin{proof} First, we observe that
	\begin{eqnarray}
	|\E(\h) - \E(\h-e)| &\leq& \E\left(\A(\h) -  \A(\h-e)\right). \notag
	\end{eqnarray}
	\noindent The inequality above follows from Lemma \ref{lem:energy-subgrafo}. Now, we observe that
	
	\[\M :=
	\A(\h) -  \A(\h-e) =
	\left[
	\begin{array}{cccc|cccc}
	0 & 1 &\cdots&1&0&\cdots&0\\
	1 & 0 &\cdots&1&0&\cdots&0\\
	\vdots&\vdots&\ddots&\vdots& \vdots&\vdots&\vdots\\
	1 & 1 &\cdots& 0&0&\cdots& 0\\
	\hline
	0&0&\cdots&0& 0&\cdots&0\\
	\vdots&\vdots&\vdots&\vdots& \vdots&\ddots&\vdots\\
	0&0&\cdots&0& 0&\cdots&0\\
	\end{array}
	\right].
	\]
	That is, the eigenvalues of $\M$, are $|e|-1$ with multiplicity $1$ and $-1$ with multiplicity $|e|-1$, thus the energy of this matrix is
	$$\E \left(\A(\h) -  \A(\h-e)\right)  =  |e|-1 + (|e|-1)|-1| = 2|e|-2.$$
	Therefore, the result follows.		
\end{proof}


\begin{Exe}\label{ExAresta}Removing a vertex always implies that $\E(\h) \geq \E(\h-v).$ For edge deletion, this is no longer true as we see bellow.

	\begin{enumerate}
	    \item $\E(\h) > \E(\h-e)$ removing any edge $e\in E$:\\
		Let $\mathcal{K}_5^{[3]}$ be the complete $3$-uniform hypergraph with $5$ vertices and let $e \in E$. Notice that $\E(K_5^{[3]})=24, \quad \textrm{and} \quad\E(K_5^{[3]}-e)=21.731.$
	
	    \item $\E(\h) < \E(\h-e)$ removing any edge $e \in E$:\\
	    Let $\h=(V,E)$ be the hypergraph given by $V(\h)=\{1,2,3,4,5,6\}$ and $E=\{135, 136, 145, 146, 235, 236, 245, 246\}.$ We have that $\E(\h)=16.$
	    For $e \in E$, we have $\E(\h-e)=16,4926.$

	    \item $\E(\h)=\E(\h-e)$ removing removing a particular edge $e$:\\
	    Let $\h=(V,E)$ be the hypergraph give by two copies of $\mathcal{K}_5^{[3]}$ with $3$ distinct edges of cardinality $2$ with no common vertices connecting the two copies. Taking $e \in E$ as one of those edges, we have that  $\E(\h)=\E(\h-e)$.	
	\end{enumerate}	
\end{Exe}	

It is known that for graphs the same is true, that is, the energy of the graph when removing an edge, can increase, decrease or remain the same, \cite{energy-edge-deletion}.

\section{Edge division}\label{division}
Some important concepts of graph theory can be generalized in more than one way. In particular, removing edges of a graph can be generalized, in the context of hypergraphs, as the deletion of edges (seen in the previous section) or as the division of edges, that we will define in this section.

\begin{Def}
	Let $\h=(V,E)$ be a hypergraph and $F=\{e_1,\ldots,e_t\} \subset E$ a subset of edges. If we can divide each edge $e_i = e_i'\cup e_i''$ so that $|e_i'| > 0$, $|e_i''| > 0$ and $e_i'\cap e_i'' = \emptyset$, we define the following multi-hypergraph $$\h\triangleleft(F, F', F'') := \big(V,(E\cup F'\cup F'')-F \big),$$
	where $F' = \{e_1',\ldots,e_t'\}$ and $F'' = \{e_1'',\ldots,e_t''\}$. Under these conditions, we say that the multi-hypergraph $\h\triangleleft(F, F', F'')$ is obtained from $\h$ by dividing the edges in $F$. When is not necessary to explicitly say how the edges in $F$ are divided, we will denote simply by $\h\triangleleft F$. If $F$ is an unitary set, $F=\{e\}$, we denote by $\h\triangleleft e$.
\end{Def}

\begin{Exe} Consider the hypergraph $\h$ with the following vertices set $V(\h)=\{0,\ldots 9\}$ and edges  $E(\h)=\{012,234567,789\}$, in the Figure \ref{fig:ex3} we represent the hypergraph $\h\triangleleft(\{234567\},\{2345\},\{67\})$.
	\begin{figure}[h]
		\centering
		\begin{tikzpicture}
		[scale=1,auto=left,every node/.style={circle,scale=0.9}]
		\node[draw,circle,fill=black,label=below:,label=above:\(0\)] (v1) at (0,0) {};
		\node[draw,circle,fill=black,label=below:,label=above:\(1\)] (v2) at (1,0) {};
		\node[draw,circle,fill=black,label=below:,label=above:\(2\)] (v3) at (2,0) {};
		\node[draw,circle,fill=black,label=below:,label=above:\(3\)] (v4) at (3,0) {};
		\node[draw,circle,fill=black,label=below:,label=above:\(4\)] (v5) at (4,0) {};
		\node[draw,circle,fill=black,label=below:,label=above:\(5\)] (v6) at (5,0) {};
		\node[draw,circle,fill=black,label=below:,label=above:\(6\)] (v7) at (6,0) {};
		\node[draw,circle,fill=black,label=below:,label=above:\(7\)] (v8) at (7,0) {};
		\node[draw,circle,fill=black,label=below:,label=above:\(8\)] (v9) at (8,0) {};
		\node[draw,circle,fill=black,label=below:,label=above:\(9\)] (v10) at (9,0) {};
		
		\begin{pgfonlayer}{background}
		\draw[edge,color=gray,line width=40pt] (v1) -- (v3);
		\draw[edge,color=gray,line width=30pt]  (v3) -- (v8);
		\draw[edge,color=gray,line width=40pt]  (v8) -- (v10);
		\end{pgfonlayer}
		\end{tikzpicture}
		\begin{tikzpicture}
		[scale=1,auto=left,every node/.style={circle,scale=0.9}]
		\node[draw,circle,fill=black,label=below:,label=above:\(0\)] (v1) at (0,0) {};
		\node[draw,circle,fill=black,label=below:,label=above:\(1\)] (v2) at (1,0) {};
		\node[draw,circle,fill=black,label=below:,label=above:\(2\)] (v3) at (2,0) {};
		\node[draw,circle,fill=black,label=below:,label=above:\(3\)] (v4) at (3,0) {};
		\node[draw,circle,fill=black,label=below:,label=above:\(4\)] (v5) at (4,0) {};
		\node[draw,circle,fill=black,label=below:,label=above:\(5\)] (v6) at (5,0) {};
		\node[draw,circle,fill=black,label=below:,label=above:\(6\)] (v7) at (6,0) {};
		\node[draw,circle,fill=black,label=below:,label=above:\(7\)] (v8) at (7,0) {};
		\node[draw,circle,fill=black,label=below:,label=above:\(8\)] (v9) at (8,0) {};
		\node[draw,circle,fill=black,label=below:,label=above:\(9\)] (v10) at (9,0) {};
		
		\begin{pgfonlayer}{background}
		\draw[edge,color=gray,line width=40pt] (v1) -- (v3);
		\draw[edge,color=gray,line width=30pt]  (v3) -- (v6);
		\draw[edge,color=gray,line width=30pt]  (v7) -- (v8);
		\draw[edge,color=gray,line width=40pt]  (v8) -- (v10);
		\end{pgfonlayer}
		\end{tikzpicture}
		\caption{Edge division}
		\label{fig:ex3}
	\end{figure}
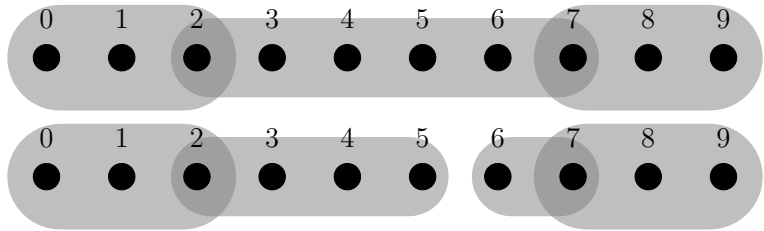
\end{Exe}

\begin{Obs}\label{rem:J}
	Let $\mathbf{J}$ be the matrix with all entries equal to $1$ and consider the  $n \times n$ block matrix given by $\mathbf{M}=\left[\begin{array}{cc}
	\mathbf{0}_{p} & \mathbf{J}_{p\times q}\\
	\mathbf{J}_{q \times p} & \mathbf{0}_{q}\\
	\end{array}\right],$ where $p+q=n.$  $\mathbf{M}$ has only two non zero eigenvalues which are given by $\sqrt{pq}$ and $-\sqrt{pq}.$
\end{Obs}

\begin{Teo}
	Let $\h$ be a hypergraph, $e \in E(\h)$ an edge and $e=e'\cup e''$ a division of this edge. Then $$|\E(\h)- \E(\h \triangleleft e)| \leq 2\sqrt{|e'||e''|}.$$
\end{Teo}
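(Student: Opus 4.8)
The plan is to mimic the structure of the edge-deletion theorem: bound the energy difference by the energy of the difference of adjacency matrices, and then compute that energy exactly. Concretely, since $\E(\h) = \E(\A(\h))$ and $\E(\h\triangleleft e) = \E(\A(\h\triangleleft e))$, the second inequality of Lemma \ref{lem:energy-subgrafo} gives immediately
$$|\E(\h) - \E(\h\triangleleft e)| \leq \E\bigl(\A(\h) - \A(\h\triangleleft e)\bigr),$$
so it suffices to identify the matrix $\M := \A(\h) - \A(\h\triangleleft e)$ and compute its energy.

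The key step is to determine $\M$ entrywise. Dividing $e$ into $e'$ and $e''$ removes the edge $e$ and inserts the two edges $e'$ and $e''$, while every other edge of $\h$ is untouched; hence the only change in an entry $d(\{i,j\})$ comes from the contribution of $e$. For a pair $\{i,j\}$ with both vertices in $e'$ (or both in $e''$), the single contribution of $e$ is simply replaced by that of $e'$ (resp.\ $e''$), so the entry is unchanged. For a pair with one vertex in $e'$ and the other in $e''$, the contribution drops from $1$ to $0$, and for all remaining pairs there is no change. Thus $\M$ has entry $1$ exactly on the pairs straddling $e'$ and $e''$, and $0$ elsewhere. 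Ordering the vertices so that those of $e'$ come first, those of $e''$ next, and the rest last, $\M$ is precisely the matrix of Remark \ref{rem:J} (bordered by zero rows and columns) with $p = |e'|$ and $q = |e''|$.

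By Remark \ref{rem:J} the nonzero eigenvalues of $\M$ are $\pm\sqrt{|e'||e''|}$, so $\E(\M) = 2\sqrt{|e'||e''|}$, and combining this with the displayed inequality yields the claim. I expect the only delicate point to be the entrywise verification that $\M$ has exactly the cross-block form: one must check that replacing $e$ by $e'$ and $e''$ leaves the within-$e'$ and within-$e''$ entries unchanged (which relies on the hypotheses $e'\cap e'' = \emptyset$ and $e'\cup e'' = e$) and affects only the cross pairs. Once this is settled, the remaining computation is routine and entirely parallel to the edge-deletion argument.
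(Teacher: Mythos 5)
Your proposal is correct and follows essentially the same route as the paper: both bound $|\E(\h)-\E(\h\triangleleft e)|$ by $\E\bigl(\A(\h)-\A(\h\triangleleft e)\bigr)$ via Lemma \ref{lem:energy-subgrafo} and then identify the difference matrix as the bipartite block matrix of Remark \ref{rem:J} with $p=|e'|$, $q=|e''|$, giving energy $2\sqrt{|e'||e''|}$. Your entrywise verification that only the cross pairs between $e'$ and $e''$ change is in fact slightly more explicit than the paper's, which simply displays the block form.
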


\begin{proof}
	First, notice that $$|\E(\h)- \E(\h \triangleleft e)| \leq  \E(\A(\h)-\A(\h \triangleleft e)).$$
	
	We can assume that $e=\{1,2, \ldots r\}$, where  $r=|e'|+|e''|.$  Now, also notice that
	
	$$\A(\h)-\A(\h \triangleleft e) = \left[\begin{array}{ccc}
	\mathbf{0}_{|e'|} & \mathbf{J}_{|e'|\times |e''|} & 0\\
	\mathbf{J}_{|e''| \times |e'|} & \mathbf{0}_{|e''|} & 0\\
	0 & 0 & 0\\
	\end{array}\right].$$
	
	From Remark \ref{rem:J}, we have that $ \E(\A(\h)-\A(\h \triangleleft e)) = 2\sqrt{|e'||e''|}.$
\end{proof}

\begin{Lem}
	Let $\h$ be a hypergraph and $e\in E(\h)$ an isolated edge (that is, the intersection of $e$ with any other edge is empty), then  $|\E(\h)- \E(\h \triangleleft e)|  = 2.$
\end{Lem}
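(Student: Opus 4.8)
The plan is to exploit the fact that an isolated edge contributes a direct summand to the adjacency matrix, so that $\E$ splits as a sum over blocks. Write $e = \{v_1,\dots,v_r\}$ with $r = |e|$. Since $e$ meets no other edge, no vertex of $e$ is adjacent to any vertex outside $e$, and for distinct $v_i, v_j \in e$ we have $d(\{v_i,v_j\}) = 1$. Hence, ordering the vertices of $e$ first,
$$\A(\h) = \left[\begin{array}{cc} \mathbf{J}_r - \mathbf{I}_r & \mathbf{0} \\ \mathbf{0} & \A(\h') \end{array}\right],$$
where $\h'$ is the sub-hypergraph induced by the remaining vertices. Because the eigenvalue multiset of a block-diagonal matrix is the union of those of its blocks, $\E$ is additive over these two blocks.

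First I would record the energy of the $e$-block. The matrix $\mathbf{J}_r - \mathbf{I}_r$ is the adjacency matrix of the complete graph $K_r$, with eigenvalues $r-1$ (once) and $-1$ (with multiplicity $r-1$); thus its energy is $(r-1) + (r-1) = 2|e| - 2$. Consequently $\E(\h) = 2|e| - 2 + \E(\A(\h'))$.

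Next I would analyse what division does. Dividing $e = e' \cup e''$ with $e' \cap e'' = \emptyset$ leaves $\h'$ untouched and replaces the single edge $e$ by the two disjoint edges $e'$ and $e''$, both of which remain isolated. Within the vertex set of $e$, two vertices are now adjacent precisely when they lie in the same part, so the $e$-block becomes block-diagonal itself, namely
$$\left[\begin{array}{cc} \mathbf{J}_{|e'|} - \mathbf{I}_{|e'|} & \mathbf{0} \\ \mathbf{0} & \mathbf{J}_{|e''|} - \mathbf{I}_{|e''|} \end{array}\right].$$
By the same computation the two parts contribute $2(|e'|-1)$ and $2(|e''|-1)$, so, using $|e'| + |e''| = |e|$,
$$\E(\h \triangleleft e) = 2(|e'|-1) + 2(|e''|-1) + \E(\A(\h')) = 2|e| - 4 + \E(\A(\h')).$$
Subtracting the two expressions gives $\E(\h) - \E(\h \triangleleft e) = 2$, which is the claim.

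The computation is short once the block decomposition is in place, so the only point that needs care is the degenerate case $|e'| = 1$ (or $|e''| = 1$): then the corresponding part is a single-vertex edge of the multi-hypergraph $\h \triangleleft e$, its block is the $1 \times 1$ zero matrix, and its energy is $0 = 2(|e'| - 1)$, so the formula still holds. I expect this edge case to be the only subtlety; everything else is a direct application of the additivity of energy over direct summands together with the known spectrum of $\mathbf{J} - \mathbf{I}$.
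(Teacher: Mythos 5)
Your proof is correct and takes essentially the same route as the paper's: both order the vertices of the isolated edge first, use the block-diagonal decomposition of $\A(\h)$ and $\A(\h\triangleleft e)$ with $\mathbf{J}-\mathbf{I}$ blocks, and subtract the resulting energies $2|e|-2$ and $2|e'|-2+2|e''|-2$. Your explicit check of the degenerate case $|e'|=1$ (a $1\times 1$ zero block, energy $0=2(|e'|-1)$) is a detail the paper leaves implicit, but otherwise the arguments coincide.
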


\begin{proof}
	We can assume $\h=\h' \cup e$, where $e=\{1,2, \ldots r\}$ and $r=|e'|+|e''|.$ We have that
	
	$$\A(\h)= \left[\begin{array}{cc}
	(\mathbf{J}-\mathbf{I})_{|e| \times |e|} & \mathbf{0}\\
	\mathbf{0} & \A(\h')\\
	\end{array}\right] \quad and \quad \A(\h \triangleleft e) =\left[\begin{array}{ccc}
	(\mathbf{J}-\mathbf{I})_{|e'| \times |e'|} & \mathbf{0} & \mathbf{0}\\
	\mathbf{0} & (\mathbf{J}-\mathbf{I})_{|e''| \times |e''|} & \mathbf{0}\\
	\mathbf{0} & \mathbf{0} & \A(\h')\\
	\end{array}\right].$$
	
	Therefore
	
	$$\E(\h) = \E(\h') +2|e|-2 \quad and \quad \E(\h \triangleleft e) = \E(\h') +2|e'|-2 +2|e''|-2 .$$
	
	So
	
	$$\E(\h)- \E(\h \triangleleft e) = 2|e|-2 -2(|e'|+|e''|)+4=2.$$
	
\end{proof}

\begin{Obs}
	Let $\h$ be a hypergraph and $e\in E(\h)$ an isolated edge, then the equality $|\E(\h)- \E(\h \triangleleft e)|  = 2\sqrt{|e'||e''|}$ holds if, and only if, $|e|=2.$
\end{Obs}

\begin{Def}	Let $\h=(V,E)$ be a connected hypergraph and $F \subset E$ a subset of edges. The ordered triple $(F, F', F'')$ is said to be a \textit{weak cut}, if $\h\triangleleft (F, F', F'')$ is disconnected, and for no proper subset $P \subset F$, the multi-hypergraph $\h\triangleleft P$ is disconnected, considering any way to divide the edges of $P$.	
\end{Def}

For our next result we need the following theorem from \cite{energy-edge-deletion}.

\begin{Teo}\label{TeoX}\cite{energy-edge-deletion}
    For a real and symmetric partitioned matrix $\C=\left[ \begin{matrix} \M & \mathbf{X}\\
	\mathbf{Y} & \B \end{matrix}\right]$ where both $\M$ and $\B$ are square matrices, we have $$\sum_j |\lambda_j(\M)| + \sum_j |\lambda_j(\B)| \leq \sum_j |\lambda_j(\C)|.$$
	Moreover, equality holds if and only if there exist unitary matrices $\mathbf{U}$ and $\mathbf{V}$ such that $\left[ \begin{matrix} \mathbf{UM} & \mathbf{UX}\\
	\mathbf{VY} & \mathbf{VB} \end{matrix}\right] $ is positive semi-definite.
\end{Teo}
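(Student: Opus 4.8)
The plan is to read the energy $\E(\mathbf{G})=\sum_j|\lambda_j(\mathbf{G})|$ of a real symmetric matrix as the sum of its singular values, i.e. as the nuclear norm, and to exploit the variational description
$$\E(\mathbf{G})=\max\{\mathrm{tr}(\mathbf{U}\mathbf{G})\colon \mathbf{U}\ \text{unitary}\}.$$
This follows from the singular value decomposition $\mathbf{G}=\mathbf{P}\Sigma\mathbf{Q}^{*}$: the choice $\mathbf{U}=\mathbf{Q}\mathbf{P}^{*}$ makes $\mathbf{U}\mathbf{G}=\mathbf{Q}\Sigma\mathbf{Q}^{*}$ positive semidefinite with trace $\sum_j\sigma_j(\mathbf{G})=\E(\mathbf{G})$, while von Neumann's trace inequality $\mathrm{tr}(\mathbf{U}\mathbf{G})\le\sum_j\sigma_j(\mathbf{G})$ shows no unitary can do better. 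The sharp form I will need is the \emph{equality clause}: for unitary $\mathbf{U}$ one has $\mathrm{tr}(\mathbf{U}\mathbf{G})=\E(\mathbf{G})$ if and only if $\mathbf{U}\mathbf{G}$ is positive semidefinite. I will also use two elementary facts throughout: the energy is unitarily invariant, $\E(\mathbf{W}\mathbf{G})=\E(\mathbf{G})$, and every principal diagonal block of a positive semidefinite matrix is again positive semidefinite, so its trace equals its energy.

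For the inequality, I would first fix unitaries $\mathbf{U}$ and $\mathbf{V}$ attaining the maxima for the diagonal blocks, so that $\mathbf{U}\M$ and $\mathbf{V}\B$ are positive semidefinite with $\mathrm{tr}(\mathbf{U}\M)=\E(\M)$ and $\mathrm{tr}(\mathbf{V}\B)=\E(\B)$. The block-diagonal matrix $\mathbf{W}=\mathrm{diag}(\mathbf{U},\mathbf{V})$ is unitary, and since the off-diagonal blocks contribute nothing to the trace one gets $\mathrm{tr}(\mathbf{W}\C)=\mathrm{tr}(\mathbf{U}\M)+\mathrm{tr}(\mathbf{V}\B)=\E(\M)+\E(\B)$. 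Applying the variational description to $\C$ then gives $\E(\C)\ge \mathrm{tr}(\mathbf{W}\C)=\E(\M)+\E(\B)$, which is the asserted bound.

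The equality statement I would settle from the very same configuration, noting that $\mathbf{W}\C$ is exactly the block matrix appearing in the theorem. If $\E(\C)=\E(\M)+\E(\B)$, then $\mathrm{tr}(\mathbf{W}\C)=\E(\C)=\E(\mathbf{W}\C)$ by unitary invariance, and the equality clause forces $\mathbf{W}\C$ to be positive semidefinite; thus $\mathbf{U},\mathbf{V}$ are the required unitaries. Conversely, if some unitaries render that block matrix $\mathbf{G}$ positive semidefinite, then $\E(\C)=\E(\mathbf{G})=\mathrm{tr}(\mathbf{G})$, and splitting the trace over the diagonal blocks $\mathbf{U}\M,\mathbf{V}\B$---which are themselves positive semidefinite---yields $\mathrm{tr}(\mathbf{G})=\E(\M)+\E(\B)$, so equality holds.

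I expect the main obstacle to be the equality clause of the variational description, specifically that $\mathrm{tr}(\mathbf{U}\mathbf{G})=\E(\mathbf{G})$ forces $\mathbf{U}\mathbf{G}$ positive semidefinite: when $\mathbf{G}$ is singular the unitary factor of its polar decomposition is not unique, so one must argue from the equality case of von Neumann's inequality that $\mathbf{U}$ is pinned down on the range of $\mathbf{G}$ and that the surviving spectrum is nonnegative. A secondary subtlety is that the statement permits complex unitaries although $\C$ is real; over the reals the natural conjugators are orthogonal, but phrasing the condition with unitary matrices is the cleanest invariant formulation and costs nothing in the argument above.
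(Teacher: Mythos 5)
Your proof is correct, but there is nothing in the paper to compare it against: the paper does not prove this statement, it imports it verbatim from the cited reference \cite{energy-edge-deletion} (Day and So's work on graph energy change under edge deletion). Judged on its own merits, your argument is sound and is essentially the standard proof of this result: the nuclear-norm characterization $\E(\mathbf{G})=\max_{\mathbf{U}}\operatorname{Re}\operatorname{tr}(\mathbf{U}\mathbf{G})$ over unitaries, the block-diagonal test unitary $\mathbf{W}=\operatorname{diag}(\mathbf{U},\mathbf{V})$ built from polar decompositions of $\M$ and $\B$, and the equality clause applied to $\mathbf{W}\C$. The one step that is more than bookkeeping is exactly the one you flag, namely that $\operatorname{tr}(\mathbf{U}\mathbf{G})=\E(\mathbf{G})$ forces $\mathbf{U}\mathbf{G}$ positive semidefinite, and it closes along the lines you indicate: writing $\mathbf{H}=\mathbf{U}\mathbf{G}=\mathbf{P}\Sigma\mathbf{Q}^{*}$, one has $\operatorname{tr}(\mathbf{H})=\sum_j\sigma_j\,(\mathbf{Q}^{*}\mathbf{P})_{jj}$, and since diagonal entries of a unitary have modulus at most $1$, equality pins $(\mathbf{Q}^{*}\mathbf{P})_{jj}=1$ for every $j$ with $\sigma_j>0$; a unitary with a diagonal entry equal to $1$ has the corresponding row and column equal to the standard basis vector, so $(\mathbf{Q}^{*}\mathbf{P})\Sigma=\Sigma$ and $\mathbf{H}=\mathbf{Q}\Sigma\mathbf{Q}^{*}$ is positive semidefinite (equivalently: $\operatorname{tr}(\mathbf{H})\leq\sum_j|\lambda_j(\mathbf{H})|\leq\sum_j\sigma_j(\mathbf{H})$, and equality in Weyl's majorization forces $\mathbf{H}$ normal with nonnegative spectrum). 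Two small reading conventions are needed but cost nothing: in the chain $\E(\C)=\E(\mathbf{W}\C)=\operatorname{tr}(\mathbf{W}\C)$ the symbol $\E$ must denote the sum of singular values (as the paper defines matrix energy), since $\mathbf{W}\C$ is no longer symmetric, and the maximum in your variational formula should be of the real part of the trace, which is harmless because your $\mathbf{W}$ produces the real nonnegative value $\E(\M)+\E(\B)$. Your remaining ingredients, one-sided unitary invariance of singular values and positive semidefiniteness of diagonal blocks of a positive semidefinite matrix, are standard and used correctly in both directions of the equality case.
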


\begin{Teo}
	Let $\h=(V,E)$ be a connected hypergraph. If $F \subset E$ is a weak cut, then $\E(\h\triangleleft F) \leq \E(\h)$.
\end{Teo}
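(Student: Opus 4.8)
The plan is to choose a vertex partition with respect to which $\A(\h)$ becomes a symmetric partitioned matrix whose diagonal blocks are precisely the blocks of the \emph{block-diagonal} matrix $\A(\h\triangleleft F)$, and then to apply Theorem \ref{TeoX}. First I would fix a single connected component of $\h\triangleleft F$, call its vertex set $V_1$, and set $V_2=V\setminus V_1$; because $(F,F',F'')$ disconnects $\h$, both sets are nonempty, and since $V_1$ is an entire component no edge of $\h\triangleleft F$ meets both $V_1$ and $V_2$. Ordering the vertices as $(V_1,V_2)$ therefore gives
\[
\A(\h\triangleleft F)=\begin{bmatrix}\M & \mathbf{0}\\ \mathbf{0} & \B\end{bmatrix},
\]
where $\M$ and $\B$ are the principal blocks indexed by $V_1$ and $V_2$. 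Since the energy of a symmetric block-diagonal matrix is the sum of the energies of its blocks, $\E(\h\triangleleft F)=\E(\M)+\E(\B)$, so the whole statement reduces to showing $\E(\M)+\E(\B)\le\E(\h)$.

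The crux, and the step I expect to be the main obstacle, is a combinatorial argument showing that \emph{every} divided edge straddles the cut, i.e.\ each $e_i\in F$ (after possibly swapping $e_i'$ and $e_i''$) satisfies $e_i'\subset V_1$ and $e_i''\subset V_2$. Here I would use the minimality built into the definition of a weak cut. Each piece $e_i'$ and $e_i''$ is an edge of $\h\triangleleft F$, hence lies entirely inside $V_1$ or entirely inside $V_2$. Suppose, for contradiction, that for some $e_i$ both pieces lie on the same side of the partition. Then leaving $e_i$ \emph{undivided} places the whole edge $e_i=e_i'\cup e_i''$ on one side, so it still creates no edge meeting both $V_1$ and $V_2$; consequently $\h\triangleleft(F\setminus\{e_i\})$, with the inherited divisions of the remaining edges, is again disconnected along $(V_1,V_2)$. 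This contradicts the requirement that no proper subset of $F$ is disconnecting. Hence every $e_i\in F$ has one piece in $V_1$ and the other in $V_2$.

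Finally I would read off the block structure of $\A(\h)$ from this. For a pair $u,v$ lying in a common side, say $u,v\in V_1$, any edge of $\h$ containing both is either an edge not in $F$ (unchanged by the division) or some $e_i\in F$; in the latter case, since $e_i''\subset V_2$, both $u,v$ must lie in $e_i'$, so they remain adjacent through $e_i'$ in $\h\triangleleft F$. Thus adjacencies inside $V_1$ (and symmetrically inside $V_2$) are identical in $\h$ and $\h\triangleleft F$, while the only adjacencies destroyed are the cross ones between $e_i'$ and $e_i''$. Therefore the diagonal blocks of $\A(\h)$ are exactly $\M$ and $\B$, and
\[
\A(\h)=\begin{bmatrix}\M & \mathbf{X}\\ \mathbf{X}^{\top} & \B\end{bmatrix}
\]
for the cross block $\mathbf{X}$ recording those removed adjacencies. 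Applying Theorem \ref{TeoX} with $\C=\A(\h)$ yields $\sum_j|\lambda_j(\M)|+\sum_j|\lambda_j(\B)|\le\sum_j|\lambda_j(\C)|$, that is $\E(\M)+\E(\B)\le\E(\h)$, which is exactly $\E(\h\triangleleft F)\le\E(\h)$.
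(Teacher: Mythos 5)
Your proof is correct and takes essentially the same route as the paper: order the vertices along the components of $\h\triangleleft F$ so that $\A(\h\triangleleft F)$ is block diagonal, observe that $\A(\h)$ has those same diagonal blocks plus an off-diagonal cross block, and apply Theorem \ref{TeoX}. The one difference is that you explicitly prove, via the minimality clause in the definition of a weak cut, that every edge of $F$ straddles the partition (so the diagonal blocks of $\A(\h)$ genuinely coincide with $\A(\h_1)$ and $\A(\h_2)$), a block identity the paper asserts without justification --- so your write-up actually fills in a step the published proof leaves implicit.
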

\begin{proof}
	Notice that $\h\triangleleft F$ is the disjoint union of two multi-hypergraphs, lets say $\h\triangleleft F=\h_1\cup\h_2$. So we have that
	$$\A(\h\triangleleft F)=\left[ \begin{matrix}\A(\h_1) & \mathbf{0}\\
	\mathbf{0} & \A(\h_1)\end{matrix}\right]\quad \textrm{ and }\quad  \A(\h)=\left[ \begin{matrix}\A(\h_1) & \mathbf{X}\\
	\mathbf{X}^T & \A(\h_2)\end{matrix}\right],$$
	where $\mathbf{X}=(x_{ij})$ is a matrix of order $|V(\h_1)|\times|V(\h_2)|$ and $x_{ij}$ is the number of edges in $F$ that contain the vertices $i\in V(\h_1)$ e $j\in V(\h_2)$.
	
	Therefore from Theorem \ref{TeoX} , we have that $\E(\h)\geq \E(\h_1)+\E(\h_2) = \E(\h\triangleleft F).$
\end{proof}

\begin{Lem}[Exercise 2 of Section 7.1, \cite{HornJohnson}]\label{LemSemiDef}	
	A positive	semidefinite matrix has a zero entry on its main diagonal if and only if the entire row and
	column to which that entry belongs is zero.
\end{Lem}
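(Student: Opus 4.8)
The plan is to split the equivalence into its two implications, disposing of the trivial direction first and then concentrating on the substantive one. Write $\M=(m_{ij})$ for the positive semidefinite matrix and suppose the diagonal entry under consideration is $m_{kk}$. The implication asserting that a vanishing $k$-th row and column forces $m_{kk}=0$ is immediate, since $m_{kk}$ is itself one of the entries of that row and column; I would dispatch this in a single sentence.

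For the converse I would assume $m_{kk}=0$ and prove that $m_{kj}=m_{jk}=0$ for every $j\neq k$. The tool I would invoke is that every principal submatrix of a positive semidefinite matrix is again positive semidefinite, which follows by restricting the quadratic form $\x^{*}\M\x$ to the vectors supported on the chosen index set. Fixing an arbitrary $j\neq k$ and passing to the $2\times 2$ principal submatrix indexed by $\{k,j\}$ gives
\[
\begin{bmatrix} m_{kk} & m_{kj}\\ m_{jk} & m_{jj}\end{bmatrix}
=\begin{bmatrix} 0 & m_{kj}\\ m_{jk} & m_{jj}\end{bmatrix}.
\]
Being positive semidefinite, this submatrix has nonnegative determinant; since $\M$ is symmetric (Hermitian in general) the determinant equals $-|m_{kj}|^{2}$, and nonnegativity forces $m_{kj}=0$. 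By symmetry $m_{jk}=0$ as well, and since $j$ was arbitrary the entire $k$-th row and $k$-th column vanish, completing the argument.

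If one prefers to avoid quoting the principal-submatrix fact, the same conclusion comes from testing positivity directly on the vector $\x$ with $x_{k}=t$, $x_{j}=1$, and all remaining coordinates zero: because $m_{kk}=0$, one obtains $\x^{*}\M\x=m_{jj}+2\,\mathrm{Re}(t\,m_{jk})\ge 0$ for every scalar $t$, and a real-affine function of $t$ that is bounded below must have vanishing slope, again yielding $m_{jk}=0$. Either route is routine, and I do not expect a genuine obstacle here—the statement is just the observation that a nonnegative quadratic form cannot carry an off-diagonal term once the matching diagonal term is absent. The only point deserving care is the bookkeeping in the two-coordinate reduction, so the main task is simply to present that reduction cleanly rather than to overcome any real difficulty.
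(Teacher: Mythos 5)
Your proof is correct. Note, however, that the paper itself offers no proof of this lemma: it is quoted verbatim from Horn and Johnson (Exercise 2 of Section 7.1, \cite{HornJohnson}) and used as a black box, so there is no in-paper argument to compare against --- what you have done is solve the cited exercise. Both of your routes are standard and sound: the trivial direction is exactly as quick as you say, and for the converse the $2\times 2$ principal submatrix indexed by $\{k,j\}$ has determinant $m_{kk}m_{jj}-|m_{kj}|^{2}=-|m_{kj}|^{2}$, which positive semidefiniteness forces to be nonnegative, giving $m_{kj}=0$. Your alternative via the test vector is equally fine, with one small caveat in the Hermitian (complex) case: $\mathrm{Re}(t\,m_{jk})$ is real-affine in $t$ only along a fixed real line, so one should either take $t=-s\,\overline{m_{jk}}$ with $s>0$ (yielding $m_{jj}-2s|m_{jk}|^{2}\ge 0$ for all $s$, hence $m_{jk}=0$) or run the slope argument twice, for $t$ real and $t$ purely imaginary. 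In the paper's setting the matrices in question are real symmetric, so $t$ real suffices and your phrasing is already adequate there; either version is a complete and appropriate proof of the lemma.
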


\begin{Teo}
	Let $\h=(V,E)$ be a connected hypergraph, $e \in E$ an edge and $v \in e$ a vertex. If $(e,e-\{v\},\{v\})$ is a weak cut, then $\E(\h\triangleleft(e,e-\{v\},\{v\})) < \E(\h)$.
\end{Teo}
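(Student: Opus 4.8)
The plan is to upgrade the non-strict inequality $\E(\h\triangleleft F)\leq\E(\h)$ --- already supplied by the previous theorem, applied to the single-edge weak cut $F=\{e\}$ with the division $e=e'\cup e''$, $e'=e-\{v\}$, $e''=\{v\}$ --- to a strict one, by showing that the equality case of Theorem \ref{TeoX} cannot occur in this situation. First I would fix the decomposition $\h\triangleleft F=\h_1\cup\h_2$ into its two components and determine how the vertices of $e$ are distributed. Since $e'$ and $e''=\{v\}$ are both edges of $\h\triangleleft F$, each of them lies inside a single component; and since $\h$ is connected while $\h\triangleleft F$ is not, the edge $e$ must meet both components. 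As $e'$ and $e''$ partition $e$ and each sits in one component, they must sit in \emph{different} components. After relabelling we get $v\in V(\h_1)$ and $e-\{v\}\subseteq V(\h_2)$, so that $e\cap V(\h_1)=\{v\}$. Writing $\A(\h)=\left[\begin{smallmatrix}\A(\h_1)&\mathbf{X}\\\mathbf{X}^T&\A(\h_2)\end{smallmatrix}\right]$ exactly as in the previous proof, the connecting block is then $\mathbf{X}=\mathbf{e}_v\mathbf{1}^T$, where $\mathbf{e}_v$ is the indicator of $v$ in $V(\h_1)$ and $\mathbf{1}$ is the indicator of $e-\{v\}$ in $V(\h_2)$. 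In particular $\mathbf{X}$ has a single nonzero row, and it is nonzero because $|e|\geq 2$ forces $e-\{v\}\neq\emptyset$.

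Next I would argue by contradiction, assuming $\E(\h\triangleleft F)=\E(\h)$. By the equality clause of Theorem \ref{TeoX} there exist unitary matrices $\mathbf{U},\mathbf{V}$ for which $\mathbf{P}:=\left[\begin{smallmatrix}\mathbf{U}\A(\h_1)&\mathbf{U}\mathbf{X}\\\mathbf{V}\mathbf{X}^T&\mathbf{V}\A(\h_2)\end{smallmatrix}\right]$ is positive semidefinite, hence in particular symmetric. The crux is to read off what symmetry of the off-diagonal blocks says about $\mathbf{U}$. Substituting $\mathbf{X}=\mathbf{e}_v\mathbf{1}^T$, the identity $(\mathbf{U}\mathbf{X})^T=\mathbf{V}\mathbf{X}^T$ becomes $\mathbf{1}\,(\mathbf{U}\mathbf{e}_v)^T=(\mathbf{V}\mathbf{1})\,\mathbf{e}_v^T$. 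The right-hand side has nonzero entries only in column $v$, so comparing columns forces $(\mathbf{U}\mathbf{e}_v)_j=0$ for every $j\neq v$; that is, the $v$-th column of $\mathbf{U}$ is $\mathbf{U}\mathbf{e}_v=c\,\mathbf{e}_v$ with $|c|=1$, since columns of a unitary matrix are unit vectors. Orthonormality of the rows of $\mathbf{U}$ then forces its $v$-th row to be $c\,\mathbf{e}_v^T$ as well.

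Finally I would exploit the vanishing diagonal of $\A(\h_1)$. Because the $v$-th row of $\mathbf{U}$ is $c\,\mathbf{e}_v^T$, the $(v,v)$ entry of $\mathbf{P}$ equals $(\mathbf{U}\A(\h_1))_{vv}=c\,(\A(\h_1))_{vv}=0$. Since $\mathbf{P}$ is positive semidefinite, Lemma \ref{LemSemiDef} guarantees that the entire $v$-th row and column of $\mathbf{P}$ are zero. But the off-diagonal part of the $v$-th row is $(\mathbf{U}\mathbf{X})_{v,\cdot}=c\,\mathbf{1}^T$, which is nonzero because $e-\{v\}\neq\emptyset$ and $|c|=1$. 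This contradiction rules out equality and yields $\E(\h\triangleleft(e,e-\{v\},\{v\}))<\E(\h)$.

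The step I expect to be most delicate is extracting the rigid form $\mathbf{U}\mathbf{e}_v=c\,\mathbf{e}_v$ from the symmetry of $\mathbf{P}$: it hinges entirely on $e''$ being the single vertex $\{v\}$, which makes $\mathbf{X}$ rank one with its unique nonzero row indexed precisely by the separated vertex $v$. It is exactly this feature --- not available for a general weak cut, where the off-diagonal block need not be rank one nor concentrated on one vertex --- that propagates the zero diagonal entry into the off-diagonal block and thereby forces the strict inequality. A secondary point worth stating carefully is the component analysis placing $v$ and $e-\{v\}$ on opposite sides of the cut, as the description of $\mathbf{X}$ depends on it.
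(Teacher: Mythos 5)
Your proposal is correct and follows essentially the same route as the paper: the same decomposition $\A(\h)=\left[\begin{smallmatrix}\A(\h_1)&\mathbf{X}\\ \mathbf{X}^T&\A(\h_2)\end{smallmatrix}\right]$ with the rank-one block $\mathbf{X}$ supported on the single row of $v$, the equality clause of Theorem \ref{TeoX} forcing the unitary to satisfy $\mathbf{U}\mathbf{e}_v=c\,\mathbf{e}_v$, and Lemma \ref{LemSemiDef} applied to the zero $(v,v)$ diagonal entry of the resulting positive semidefinite matrix. The only divergence is the closing contradiction: the paper concludes $\y=\mathbf{0}$ (no neighbors of $v$ inside its own component) and appeals to connectedness, whereas you read the contradiction directly off the nonzero off-diagonal row $c\,\mathbf{1}^T$ --- a marginally cleaner finish that also covers the degenerate case where $v$ becomes an isolated vertex after the division (e.g.\ when $d(v)=1$), where the paper's connectivity argument is vacuous.
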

\begin{proof}
	Notice that $\h\triangleleft (e,e-\{v\},\{v\})=\h_1\cup\h_2$. So we have that
	$$\A(\h\triangleleft(e,e-\{v\},\{v\}))=\left[ \begin{matrix}\A(\h_1) & \mathbf{0}\\
	\mathbf{0} & \A(\h_1)\end{matrix}\right]\quad \textrm{ and }\quad  \A(\h)=\left[ \begin{matrix}\A(\h_1) & \mathbf{X}\\
	\mathbf{X}^T & \A(\h_2)\end{matrix}\right],$$
	with $x_{ij} = 1$ if $i\in e-\{v\}$ and $j=v$, and with  $x_{ij}=0$ otherwise. We can label the vertices in such way that the first row of $\mathbf{X}$ is the only non null row.
	
	Supposing that $\E(\h\triangleleft(e,e-\{v\},\{v\})) = \E(\h)$, from Theorem \ref{TeoX} we know that exist orthogonal matrices $\mathbf{U}$ and $\mathbf{W}$, such that
	
	$$\left[ \begin{matrix}\mathbf{U}\A(\h_1) & \mathbf{U}\mathbf{X}\\	\mathbf{W}\mathbf{X}^T & \mathbf{W}\A(\h_1)\end{matrix}\right],$$
	is a positive semidefined matrix. So we have that $(\mathbf{U}\mathbf{X})^T=\mathbf{W}\mathbf{X}^T$, because of the structure of $\mathbf{X}$ we have that $\mathbf{W} = \left[\begin{matrix}\alpha& \mathbf{0}\\\mathbf{0}& \mathbf{W}_1\end{matrix}\right]$, with $|\alpha| = 1$ and $\mathbf{W}_1$ is a orthogonal matrix. So, we can write $\A(\h_1) = \left[\begin{matrix}0& \mathbf{y}^T\\\mathbf{y}& \mathbf{A}_1\end{matrix}\right]$, therefore $\mathbf{W}\A(\h_1) = \left[\begin{matrix}0& \alpha\mathbf{y}^T\\\mathbf{W}_1\mathbf{y}& \mathbf{W}_1\mathbf{A}_1\end{matrix}\right]$. From Lemma \ref{LemSemiDef}, we have that $\y=\mathbf{0}$, but this implies that $v$ has no neighbor in $\h_2$ what contradicts the fact that $\h$ is connected.	
\end{proof}

\begin{Def}
	A hypergraph $\mathcal{T}$ is a hypertree if $\mathcal{T}$ is connected and has no cycles.
\end{Def}

For a hypertree $\mathcal{T}$ we notice that:
\begin{enumerate} 	
    \item $\mathcal{T}$ is a linear hypergraph, since if there exist two edges $e_1$ e $e_2$ that have two (or more) common vertices $u$ e $v$, then we can construct the cycle  $ue_1ve_2u$.

    \item for any edge $e$ of a hypertree, if $v \in e$, then $(e,e-\{v\},v)$ is a weak cut.
\end{enumerate}

\begin{Cor}
	Let $\mathcal{T}$ be a hypertree. If $e \in E$ is an edge and $v \in e$ is a vertex, then $\E(\mathcal{T}\triangleleft(e,e-\{v\},\{v\})) < \E(\mathcal{T})$.	
\end{Cor}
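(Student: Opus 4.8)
The plan is to deduce this corollary directly from the preceding theorem on weak cuts, whose conclusion is exactly the strict inequality $\E(\h\triangleleft(e,e-\{v\},\{v\})) < \E(\h)$ for a connected hypergraph $\h$ in which $(e,e-\{v\},\{v\})$ is a weak cut. Thus the whole task reduces to verifying the two hypotheses of that theorem for $\h=\mathcal{T}$: that $\mathcal{T}$ is connected, and that $(e,e-\{v\},\{v\})$ is a weak cut. The first is immediate, since by definition a hypertree is connected (and acyclic).

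For the weak-cut hypothesis, recall that a weak cut $(F,F',F'')$ must satisfy two conditions. Here $F=\{e\}$ is a singleton, so the minimality condition is vacuous: the only proper subset is $P=\emptyset$, and $\mathcal{T}\triangleleft\emptyset=\mathcal{T}$ is connected. Hence I only need to show that $\mathcal{T}'=\mathcal{T}\triangleleft(e,e-\{v\},\{v\})$ is disconnected, which is precisely the second observation recorded above for hypertrees and is where acyclicity enters.

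To establish this, I would argue by contradiction. Fix any $u\in e-\{v\}$ (nonempty, since $|e|\ge 2$) and suppose $\mathcal{T}'$ were connected, so that a walk from $v$ to $u$ exists in $\mathcal{T}'$. I would take a shortest walk $v=w_0 f_1 w_1\cdots f_\ell w_\ell=u$ that first reaches the set $e-\{v\}$ at its endpoint. The single-vertex edge $\{v\}$ can never appear in a walk, and a short check shows the edge $e-\{v\}$ cannot appear either: for $i<\ell$ the vertex $w_i$ lies outside $e-\{v\}$, while $w_{\ell-1}$ also lies outside $e-\{v\}$, so no step of the walk has both endpoints in $e-\{v\}$. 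Consequently every $f_i$ is an edge of $\mathcal{T}$ different from $e$, so the walk lives in $\mathcal{T}$ and avoids $e$.

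Finally, since $v,u\in e$ with $v\neq u$, appending the step $u\,e\,v$ closes this walk into a closed walk in $\mathcal{T}$ that traverses $e$ exactly once and therefore contains a genuine cycle, contradicting the fact that the hypertree $\mathcal{T}$ has no cycles. This forces $\mathcal{T}'$ to be disconnected, completing the verification that $(e,e-\{v\},\{v\})$ is a weak cut, and the previous theorem then yields $\E(\mathcal{T}\triangleleft(e,e-\{v\},\{v\}))<\E(\mathcal{T})$. I expect the only delicate point to be this walk-to-cycle reduction---specifically ruling out that the split pieces $e-\{v\}$ and $\{v\}$ themselves restore a connection between $v$ and $e-\{v\}$---rather than the invocation of the theorem, which is routine.
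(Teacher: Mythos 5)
Your proof is correct and takes essentially the same route as the paper: the corollary there is obtained by invoking the weak-cut theorem together with the preceding remark, stated without proof, that $(e,e-\{v\},\{v\})$ is a weak cut for any edge $e$ of a hypertree and any $v\in e$. Your contradiction argument (shortest walk in $\mathcal{T}\triangleleft(e,e-\{v\},\{v\})$ from $v$ first meeting $e-\{v\}$ at its endpoint, avoiding the pieces $\{v\}$ and $e-\{v\}$, then closed up through $e$ into a cycle) simply supplies the verification of that remark which the paper leaves implicit, and it is sound.
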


\begin{Obs}
	Let $\h=(V,E)$ be a connected hypergraph and $e \in E$ an edge. If $v \in e$ has degree $1$, then $(e,e-\{v\},v)$ is a weak cut.
\end{Obs}

\begin{Cor}
	Let $\h=(V,E)$ be a connected hypergraph and $e \in E$ an edge. If $v \in e$ has degree $1$, then  $\E(\h\triangleleft(e,e-\{v\},\{v\})) < \E(\h)$.	
\end{Cor}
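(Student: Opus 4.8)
The plan is to obtain this corollary as an immediate consequence of the preceding Theorem on the strict energy decrease under a weak cut, combined with the Remark immediately above it. The only substantive point I need to confirm is that the degree-one hypothesis on $v$ forces the ordered triple $(e,e-\{v\},\{v\})$ to be a weak cut; once that is in place, the strict inequality $\E(\h\triangleleft(e,e-\{v\},\{v\})) < \E(\h)$ is precisely the conclusion of the Theorem applied to this triple.

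First I would verify the weak-cut property. Here $F=\{e\}$ is a singleton, so its only proper subset is $\emptyset$, and $\h\triangleleft\emptyset=\h$ is connected by hypothesis; hence the minimality clause in the definition of weak cut is vacuously satisfied. It remains to check that $\h\triangleleft(e,e-\{v\},\{v\})$ is disconnected, and this is exactly where the degree-one assumption is used. Since $v$ lies in no edge other than $e$, after the division $v$ belongs only to the singleton (multi-)edge $\{v\}$. A walk leaving $v$ would require an edge containing $v$ together with a second distinct vertex, but in $\h\triangleleft(e,e-\{v\},\{v\})$ no such edge exists. Therefore $v$ is isolated from every other vertex, the resulting multi-hypergraph is disconnected, and $(e,e-\{v\},\{v\})$ is a weak cut.

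With the weak-cut property established, I would simply invoke the Theorem on strict decrease of energy under a weak cut of the form $(e,e-\{v\},\{v\})$, which yields $\E(\h\triangleleft(e,e-\{v\},\{v\})) < \E(\h)$ and completes the argument. The corollary is thus essentially a specialization of that Theorem, and the main (mild) obstacle is only the careful passage to the multi-hypergraph setting: the division genuinely produces a single-vertex edge $\{v\}$, which is admissible for multi-hypergraphs but must be read correctly so that $v$ is recognized as isolated. Everything beyond that verification is a direct citation of the earlier result.
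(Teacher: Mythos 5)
Your proof is correct and takes essentially the same route as the paper, which states the corollary without proof as an immediate consequence of the preceding Remark (that the degree-one hypothesis makes $(e,e-\{v\},\{v\})$ a weak cut) combined with the Theorem on strict energy decrease under weak cuts of this form. Your explicit verification of the weak-cut property --- $v$ becomes isolated in the singleton edge $\{v\}$, so the divided multi-hypergraph is disconnected, while minimality is vacuous since the only proper subset of $\{e\}$ is empty --- is exactly the content the paper leaves implicit in that Remark.
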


\begin{Exe} As it happens in $\h-e$, when we divide edges we can also have $\E(\h) > \E(\h \triangleleft e)$ and $\E(\h) < \E(\h \triangleleft e)$.

\begin{enumerate}
	\item
	$\E(\h) > \E(\h \triangleleft e)$, by doing any division at any edge:\\
	Let $\mathcal{K}_5^{[4]}$ be the complete $4$-uniform hypergraph with $5$ vertices and observe that $\E(\mathcal{K}_5^{[4]})=24.$ 	Let $e \in E$. Without loss of generality, we can assume that $e=\{1,2,3,4\}.$ We can divide $e$ in two different ways.\\
	
	Case 1: If $e'=\{1,2\}$ e $e"=\{3,4\}$, then $\E(\mathcal{K}_5^{[4]} \triangleleft e)=20,8924.$\newline
	
	Case 2: If $e'=\{1\}$ e $e"=\{2,3,4\}$, then $\E(\mathcal{K}_5^{[4]} \triangleleft e)=21,7438.$

\item
	 $\E(\h) < \E(\h \triangleleft e)$, by doing any division at any edge:\\
	
	Let $\h=(V,E)$ be the hypergraph give by $V=\{1,2,3,4,5,6,7,8\}$ and $$E=\left\lbrace \begin{matrix}1357,1358,1367,1368,1457,1458,1467,1468,\\2357,2358,2367,2368,2457,2458,2467,2468\end{matrix}\right\rbrace_.$$
	
	We have that  $\E(\h)=48$. Now, let $e \in E$ be any edge. Without loss of generality, we can assume that $e=\{1,3,5,7\}.$ We can divide $e$ in two different ways.\\

	Case 1: If $e'=\{1,3\}$ e $e"=\{5,7\}$, then $\E(\h)=48.3294$.\newline
	
	Case 2: If $e'=\{1\}$ e $e"=\{3,5,7\}$, then $\E(\h)=48,4723$.

	\item
	$\E(\h)=\E(\h \triangleleft e)$ by doing a particular division at a particular edge:
	
	Example \ref{ExAresta} (3) also works on this case. Notice that the edge $e$ has cardinality $2$ and in this case we have that $\A(\h \triangleleft e) = \A(\h-e).$
\end{enumerate}	
\end{Exe}

\section{Sharp bounds for energy}\label{bounds}
In this section we will obtain bounds for the adjacency energy of a hypergraph. These bounds will be computed as functions of important and well known spectral and structural parameters.

\begin{Def}
	Let $\h$ be a hypergraph. Its \textit{Zagreb index} is defined as the sum of the squares of the degrees of its vertices. More precisely $$Z(\h) = \sum_{v \in V(\h)}d(v)^2.$$ This is an important parameter in graph theory, having chemistry applications, see the survey \cite{energy-Zagreb}.
\end{Def}

We will start by presenting some upper bounds for the energy of a hypergraph

\begin{Teo} \label{cota-sup1}
	Let $\h=(V,E)$ be a hypergraph with rank $r$. If $|E|=m$  and $|V|=n$, then
	$$\E(\h) \leq \sqrt{n(r-1)Z(\h)} \leq \sqrt{nm(r^2-r)\Delta}.$$
	Equality holds if, and only if, $\h$ is a $2$-graph with isolated edges and no isolated vertices or if $\h$ is an edgeless hypergraph.
\end{Teo}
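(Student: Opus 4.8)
The plan is to obtain the first inequality from the Cauchy--Schwarz inequality applied to the vector of absolute eigenvalues, and then to estimate $\mathrm{tr}(\A^2)$ by a purely combinatorial argument. Writing $\lambda_1,\ldots,\lambda_n$ for the eigenvalues of $\A=\A(\h)$, Cauchy--Schwarz gives
$$\E(\h)=\sum_{i=1}^n|\lambda_i|\le \sqrt{n\sum_{i=1}^n\lambda_i^2}=\sqrt{n\,\mathrm{tr}(\A^2)}.$$
Since $\A$ is symmetric with zero diagonal, $\mathrm{tr}(\A^2)=\sum_{i,j}a_{ij}^2=\sum_{i\neq j}d(\{i,j\})^2$. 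The key step is to bound this by $(r-1)Z(\h)$: for a fixed vertex $i$ one has $d(\{i,j\})\le d(i)$ for every $j$ (each edge through both $i$ and $j$ passes through $i$), while $\sum_{j\neq i}d(\{i,j\})=\sum_{e\in E_{[i]}}(|e|-1)\le (r-1)d(i)$ because every edge has at most $r$ vertices. Combining these gives $\sum_{j\neq i}d(\{i,j\})^2\le d(i)\sum_{j\neq i}d(\{i,j\})\le (r-1)d(i)^2$, and summing over $i$ yields $\mathrm{tr}(\A^2)\le (r-1)Z(\h)$, which is the first inequality.

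For the second inequality it suffices to show $Z(\h)\le rm\Delta$. This follows from $Z(\h)=\sum_v d(v)^2\le \Delta\sum_v d(v)$ together with the handshake-type identity $\sum_v d(v)=\sum_{e\in E}|e|\le rm$. Substituting gives $(r-1)Z(\h)\le (r-1)rm\Delta=(r^2-r)m\Delta$, hence $\sqrt{n(r-1)Z(\h)}\le\sqrt{nm(r^2-r)\Delta}$.

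The delicate part is the equality characterization, and I expect it to be the main obstacle. For the ``if'' direction I would simply check the two families: an edgeless hypergraph gives $0=0$, and a disjoint union of $2$-edges covering all vertices has $\A^2=\mathbf{I}$, eigenvalues $\pm1$, so $\E(\h)=n$, while $r=2$, $Z(\h)=n$, $\Delta=1$, $m=n/2$ make both radicands equal $n^2$. For the ``only if'' direction, note that equality throughout forces in particular equality in Cauchy--Schwarz, so all $|\lambda_i|$ equal some constant $c$; then every eigenvalue of $\A^2$ equals $c^2$ and, $\A^2$ being symmetric, $\A^2=c^2\mathbf{I}$. If $c=0$ then $\A=\mathbf{0}$ and $\h$ is edgeless. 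If $c>0$, I would exploit the vanishing off-diagonal entries $(\A^2)_{ij}=\sum_k d(\{i,k\})d(\{k,j\})=0$: were some edge to contain three distinct vertices $i,j,k$, the term $d(\{i,k\})d(\{k,j\})\ge 1$ would make $(\A^2)_{ij}>0$, a contradiction; hence $r=2$ and $\h$ is a simple graph with $0/1$ adjacency. Then $(\A^2)_{ii}=d(i)=c^2$ forces regularity with no isolated vertices, and $(\A^2)_{ij}=0$ says no two vertices share a common neighbour, which for a $c^2$-regular graph forces $c^2=1$; thus $\h$ is $1$-regular, that is, a $2$-graph consisting of isolated edges with no isolated vertices. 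I would finally note that these conclusions are consistent with equality in the second and third members of the chain, as already verified in the ``if'' direction, completing the characterization.
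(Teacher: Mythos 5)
Your proof of the two inequalities is essentially identical to the paper's: the same Cauchy--Schwarz step $\E(\h)\le\sqrt{n\,\mathrm{tr}(\A^2)}$, the same bound $\mathrm{tr}(\A^2)=\sum_{i\neq j}d(\{i,j\})^2\le(r-1)Z(\h)$ via $d(\{i,j\})\le d(i)$ and $\sum_{j\neq i}d(\{i,j\})\le(r-1)d(i)$, and the same estimate $Z(\h)\le\Delta r m$. Where you genuinely diverge is the equality characterization, and there your argument is actually tighter than the paper's. The paper analyzes each inequality in the chain separately, arguing somewhat informally that equality in $(*)$ forces every pair of neighbors to lie in exactly the same edges (hence isolated edges), that $(**)$ forces uniformity, and that the Cauchy--Schwarz case forces all $|\lambda_i|$ equal. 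You instead extract everything from the Cauchy--Schwarz equality alone: all $|\lambda_i|=c$ gives $\A^2=c^2\mathbf{I}$ (since $\A$ is symmetric), and then the vanishing off-diagonal entries of $\A^2$ rule out any edge of size $\ge 3$ (a third vertex $k$ in an edge with $i,j$ contributes $a_{ik}a_{kj}\ge 1$ to $(\A^2)_{ij}$, modulo a minor clash between your summation index and the named vertex $k$), force $c^2$-regularity from the diagonal, and force $c^2=1$ because a vertex of degree $\ge 2$ would give two of its neighbors a common neighbor. This buys you two things the paper does not make explicit: a fully rigorous structural derivation (the paper's ``this only happens on hypergraphs made of isolated edges'' is asserted rather than proved), and the stronger observation that equality in the \emph{first} inequality $\E(\h)=\sqrt{n(r-1)Z(\h)}$ already pins down the perfect-matching-or-edgeless structure, with equality in the remaining links then verified directly as in your ``if'' direction. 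Both routes arrive at the same characterization; yours packages the case analysis into the single identity $\A^2=c^2\mathbf{I}$.
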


\begin{proof}
	Let $\lambda_1,\cdots,\lambda_n$ be the eigenvalues of $\A(\h)$. Then,
	$$\sum_{i=1}^n \lambda_i^2 = \mathrm{Tr}(\A^2) = \sum_{i=1}^n\sum_{j=1}^n d(\{i,j\})^2\stackrel{(*)}{\leq} \sum_{i=1}^n\left( d(i)\sum_{j=1}^n d(\{i,j\})\right) \stackrel{(**)}{\leq} (r-1)\sum_{i=1}^nd(i)^2 .$$
	
	Notice that the equality $(*)$ holds only on hypergraphs with the following property: If two vertices $u$ e $v$ are neighbors then they belong to the same edges. But this only happens on hypergraphs made of isolated edges (and possibly some isolated vertices).	Also notice that the equality $(**)$  holds only on uniform hypergraphs. Then, $$\sum_{i=1}^n \lambda_i^2 \leq (r-1)Z(\h).$$

	Therefore,	$$\E(\h)^2 = \left( \sum_{i=1}^n |\lambda_i|\right)^2  \stackrel{(cs)}{\leq} n\left(\sum_{i=1}^n \lambda_i^2 \right) \leq n(r-1)Z(\h).$$
	
	By Cauchy Schwarz inequality, we know that the equality $(cs)$ only is true when all the eigenvalues have the same absolute values. For uniform hypegraphs made of isolated edges, this only happens when there is no isolated vertex and the edges have size $2$.
	
	Since,	$$Z(\h) = \sum_{i=1}^nd(i)^2 \leq \sum_{i=1}^n\Delta d(i) \leq\Delta rm$$
	
	And this equality occurs only on regulars hypergraphs, 	we conclude that
	$$\E(\h) \leq \sqrt{n(r-1)Z(\h)} \leq \sqrt{nm(r^2-r)\Delta}$$
	
	With all the equalities happening only if $\h$ is a $2$-graph with isolated edges and no isolated vertices or if $\h$ has no edges.
\end{proof}

\begin{Teo} \label{cota-sup2}
	Let $\h=(V,E)$ be a hypergraph with rank $r$ and $|V|=n$, then
	$$\E(\h) \leq \lambda_1 + \sqrt{(n-1)[(r-1)Z(\h)-\lambda_1^2]}.$$
	Equality holds if, and only if, $\h$ is a $2$-graph with isolated edges and no isolated vertices or if $\h$ is an edgeless hypergraph.
\end{Teo}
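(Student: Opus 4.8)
Theorem \ref{cota-sup2} is a Koolman–Moulton-type bound adapted to the hypergraph setting. The plan is to mirror the structure of the classical graph energy upper bound, isolating the largest eigenvalue $\lambda_1$ and applying Cauchy–Schwarz to the remaining $n-1$ eigenvalues. The key input inherited from the proof of Theorem \ref{cota-sup1} is the inequality
\[
\sum_{i=1}^n \lambda_i^2 = \mathrm{Tr}(\A^2) \leq (r-1)Z(\h),
\]
together with the fact that $\mathrm{Tr}(\A) = \sum_{i=1}^n \lambda_i = 0$ since the adjacency matrix has zero diagonal.

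First I would write $\E(\h) = |\lambda_1| + \sum_{i=2}^n |\lambda_i|$. Assuming $\h$ has at least one edge, the spectral radius is achieved by a positive eigenvalue (Perron–Frobenius on the nonnegative matrix $\A$), so $|\lambda_1| = \lambda_1$. Then I would apply the Cauchy–Schwarz inequality to the last $n-1$ terms:
\[
\sum_{i=2}^n |\lambda_i| \leq \sqrt{(n-1)\sum_{i=2}^n \lambda_i^2} = \sqrt{(n-1)\left(\sum_{i=1}^n \lambda_i^2 - \lambda_1^2\right)}.
\]
Substituting the trace bound $\sum_{i=1}^n \lambda_i^2 \leq (r-1)Z(\h)$ gives
\[
\E(\h) \leq \lambda_1 + \sqrt{(n-1)\left[(r-1)Z(\h) - \lambda_1^2\right]},
\]
which is exactly the claimed inequality.

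The main obstacle is the equality analysis, and here I expect to reuse the work already done for Theorem \ref{cota-sup1} almost verbatim. The bound chains two separate estimates: the trace inequality $\mathrm{Tr}(\A^2) \leq (r-1)Z(\h)$ and the Cauchy–Schwarz step. For the combined inequality to be tight, \emph{both} must be equalities simultaneously. The trace inequality is an equality precisely when $\h$ is a uniform hypergraph made of isolated edges (this was established in the analysis of steps $(*)$ and $(**)$ of the previous theorem). The Cauchy–Schwarz step is an equality exactly when $|\lambda_2| = \cdots = |\lambda_n|$. I would argue that on a uniform hypergraph of isolated edges these two conditions together force the edge size to be $2$ with no isolated vertices: for isolated edges of size $k$, each edge contributes eigenvalues $k-1$ (once) and $-1$ (with multiplicity $k-1$), and any isolated vertex contributes a zero eigenvalue; requiring all of $\lambda_2,\dots,\lambda_n$ to have equal modulus rules out a mixture of $0$'s, $-1$'s, and any further positive values unless $k=2$ and there are no isolated vertices. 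The edgeless case is the degenerate situation where every eigenvalue is zero and both sides vanish. Thus the equality characterization coincides with that of Theorem \ref{cota-sup1}.
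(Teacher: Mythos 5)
Your proposal is correct and follows essentially the same route as the paper: isolate $\lambda_1$, apply Cauchy--Schwarz to $\sum_{i=2}^n|\lambda_i|$, plug in the trace bound $\sum_i \lambda_i^2 \leq (r-1)Z(\h)$ from Theorem \ref{cota-sup1}, and characterize equality by combining tightness of the trace inequality (uniform hypergraphs of isolated edges) with tightness of Cauchy--Schwarz ($|\lambda_2|=\cdots=|\lambda_n|$, forcing edge size $2$ and no isolated vertices). Your explicit Perron--Frobenius justification of $|\lambda_1|=\lambda_1$ and the eigenvalue bookkeeping for isolated edges are small refinements the paper leaves implicit, but the argument is the same.
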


\begin{proof}
	From Theorem \ref{cota-sup1} we have that
	$$\lambda_1^2 + \sum_{i=2}^n \lambda_i^2 = \sum_{i=1}^n \lambda_i^2 \leq (r-1)Z(\h).$$
	
	With equality holding only on uniform hypergraphs made of isolated edges and possibly some isolated vertices.
	
	Therefore,
	$$(\E(\h) - \lambda_1)^2=(\sum_{i=2}^n |\lambda_i|)^2  \stackrel{(cs)}{\leq} (n-1)\sum_{i=2}^n \lambda_i^2 \leq  (n-1)[(r-1)Z(\h)-\lambda_1^2].$$
	
	Notice that the equality $(cs)$ only is true when $|\lambda_2|=|\lambda_3|= \ldots = |\lambda_n|$.
	For uniform hypergraphs made of isolated edges, this can only occur when all the edges have size 2 and there is no isolated vertices.
\end{proof}

\begin{Lem}
    Let $\h$ be a hypergraph with rank $r$ and co-rank $s$, then  $$(s-1)\delta \leq \lambda_1 \leq (r-1)\Delta.$$
\end{Lem}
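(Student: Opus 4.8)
The plan is to control $\lambda_1$ through the row sums of $\A(\h)$, exploiting that $\A(\h)$ is a nonnegative symmetric matrix, so that $\lambda_1$ coincides with the spectral radius $\rho(\A)$ and is therefore sandwiched between its smallest and largest row sums. First I would compute the $u$-th row sum by applying the identity $(\A(\h)\x)_u = \sum_{e \in E_{[u]}}x(e-\{u\})$ from the end of Section \ref{sec:pre} to the all-ones vector $\mathbf{1}$. Since $x(e-\{u\}) = |e|-1$ in that case, the $u$-th row sum equals $\sum_{e \in E_{[u]}}(|e|-1)$; equivalently, summing $d(\{u,w\})$ over $w \neq u$ counts, for each edge through $u$, its $|e|-1$ remaining vertices.

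Next I would bound each row sum using the rank and co-rank. Because $s \leq |e| \leq r$ for every edge, and because the number of edges through $u$ is $d(u)$ with $\delta \leq d(u) \leq \Delta$, each row sum satisfies
$$(s-1)\delta \leq d(u)(s-1) \leq \sum_{e \in E_{[u]}}(|e|-1) \leq d(u)(r-1) \leq (r-1)\Delta.$$

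For the upper bound I would then invoke the standard fact that the spectral radius of a nonnegative matrix is at most its largest row sum (Perron--Frobenius / Gershgorin); since $\A(\h)$ is symmetric and nonnegative, $\lambda_1 = \rho(\A)$, and the right-hand inequality above gives $\lambda_1 \leq (r-1)\Delta$. For the lower bound the cleanest route is the Rayleigh quotient with $\mathbf{1}$, which avoids any appeal to a minimum-row-sum estimate: one has
$$\lambda_1 \geq \frac{\mathbf{1}^{T}\A(\h)\mathbf{1}}{\mathbf{1}^{T}\mathbf{1}} = \frac{1}{n}\sum_{u \in V}\sum_{e \in E_{[u]}}(|e|-1) \geq \frac{1}{n}\sum_{u \in V}(s-1)\delta = (s-1)\delta,$$
using the left-hand inequality above termwise.

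I do not expect a serious obstacle here; the proof is essentially bookkeeping. The two points demanding care are the justification that $\lambda_1 = \rho(\A)$ (which rests on $\A(\h)$ being nonnegative, via Perron--Frobenius) and the correct accounting in the row-sum identity, namely that each edge through $u$ contributes exactly $|e|-1$ to the row sum. Both are routine, so the main value of the argument is simply recognizing that the row sums of $\A(\h)$ are controlled by $\delta, s$ from below and $\Delta, r$ from above.
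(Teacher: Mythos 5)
Your proof is correct, but it takes a genuinely different route from the paper's. The paper works directly with a nonnegative eigenvector $\x$ of $\lambda_1$: evaluating the eigen-equation $\lambda_1 x_v = \sum_{e \in E_{[v]}} x(e-\{v\})$ at a vertex $v$ where $x_v$ is maximal and dividing by $x_v$ gives $\lambda_1 \leq (r-1)\Delta$, and the lower bound is claimed to follow ``similarly'' at a vertex $u$ where $x_u$ is minimal. You instead do row-sum bookkeeping: the $u$-th row sum of $\A(\h)$ equals $\sum_{e\in E_{[u]}}(|e|-1)$, which you correctly sandwich between $(s-1)\delta$ and $(r-1)\Delta$, then quote the maximum-row-sum bound on the spectral radius for the upper estimate and the Rayleigh quotient with the all-ones vector for the lower one. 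For the upper bound the two arguments are essentially the same idea, since the max-row-sum bound for nonnegative matrices is standardly proved by exactly the paper's max-entry computation; you have merely outsourced it to a quoted fact. For the lower bound, however, your route is genuinely more robust: the paper's minimal-entry argument tacitly divides by $x_u$, which breaks down when $x_u = 0$ --- possible when $\h$ is disconnected, since the nonnegative eigenvector for $\lambda_1$ may vanish off a component of largest spectral radius --- whereas $\lambda_1 \geq \mathbf{1}^{T}\A(\h)\mathbf{1}/n$ requires no positivity of any eigenvector entry, and the average row sum dominates the minimum row sum in any case. So your write-up proves the same statement while quietly closing a small gap in the paper's lower-bound argument.
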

	
\begin{proof}
    Let $\x=(x_i)$ be the non negative eigenvector associated to the eigenvalue $\lambda_1$. Taking vertices $v,u \in V$ such that $x_v$ has maximum value and $x_u$ has minimum value, we have that

    $$\lambda_1 x_v = (\A\x)_v = \sum_{e \in E_{[v]}}x\left(e-\{v\}\right)$$
    therefore
    $$\lambda_1 = \sum_{e \in E_{[v]}}\frac{x\left(e-\{v\}\right)}{x_v} \leq \Delta\frac{(r-1)x_v}{x_v} = (r-1)\Delta.$$

    The other inequality follows similarly, using $x_u$.
\end{proof}

\begin{Cor}
	Let $\h=(V,E)$ be a  $r$-uniform and $d$-regular hypergraph, then
	$$\E(\h) \leq (r-1)d + \sqrt{(n-1)[(r-1)nd^2 -(r-1)^2d^2]}.$$
\end{Cor}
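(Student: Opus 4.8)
The plan is to combine the two preceding results specialized to the hypothesis of the corollary. The corollary assumes $\h$ is $r$-uniform and $d$-regular, so in particular the rank is $r = s$ and every vertex has degree $d(v) = d$. The natural approach is to feed these specializations into Theorem \ref{cota-sup2}, namely
\begin{equation*}
\E(\h) \leq \lambda_1 + \sqrt{(n-1)\bigl[(r-1)Z(\h) - \lambda_1^2\bigr]},
\end{equation*}
and then bound $\lambda_1$ from above and $-\lambda_1^2$ suitably so that the right-hand side becomes the stated expression. The first routine step is to compute the Zagreb index: since the hypergraph is $d$-regular on $n$ vertices, $Z(\h) = \sum_{v}d(v)^2 = nd^2$, so $(r-1)Z(\h) = (r-1)nd^2$, which already matches the first term under the square root.

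Next I would handle the two occurrences of $\lambda_1$. By the preceding lemma, $\lambda_1 \leq (r-1)\Delta = (r-1)d$ since $\Delta = d$ in a $d$-regular hypergraph. Substituting $\lambda_1 \leq (r-1)d$ into the leading term $\lambda_1$ immediately produces the leading $(r-1)d$ of the claimed bound, and substituting it into the $-\lambda_1^2$ term (after checking monotonicity, see below) replaces $-\lambda_1^2$ by $-(r-1)^2 d^2$, giving exactly the term $(r-1)nd^2 - (r-1)^2 d^2$ inside the radical. Assembling these gives
\begin{equation*}
\E(\h) \leq (r-1)d + \sqrt{(n-1)\bigl[(r-1)nd^2 - (r-1)^2 d^2\bigr]},
\end{equation*}
which is the desired inequality.

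The main subtlety, and the step I would be most careful about, is that $\lambda_1$ appears in the bound of Theorem \ref{cota-sup2} with mixed signs: it enters positively as the isolated leading term but negatively inside the square root through $-\lambda_1^2$. Thus one cannot simply replace $\lambda_1$ by its upper bound everywhere without argument, because increasing $\lambda_1$ decreases the radical. The clean way to resolve this is to observe that the function $g(x) = x + \sqrt{(n-1)[(r-1)nd^2 - x^2]}$ is not monotone in general, so I would instead argue that the true bound of Theorem \ref{cota-sup2} is already an upper bound on $\E(\h)$, and that $g$ evaluated at the admissible value $\lambda_1$ is dominated by the maximum of $g$ over the feasible range. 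Concretely, since $g'(x) = 1 - \tfrac{(n-1)x}{\sqrt{(n-1)[(r-1)nd^2 - x^2]}}$, one checks that $g$ is increasing on an initial interval; provided $(r-1)d$ lies in the increasing region (which holds in the regime where these bounds are meaningful, as $\lambda_1$ is of order $(r-1)d$ and the radicand stays positive), we get $g(\lambda_1) \leq g((r-1)d)$, and $g((r-1)d)$ is precisely the right-hand side of the corollary. I would state this monotonicity check explicitly rather than gloss over it, as it is the only non-mechanical point in the argument.
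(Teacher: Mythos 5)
You correctly isolate the one genuine subtlety here --- that $\lambda_1$ enters the bound of Theorem \ref{cota-sup2} positively as the leading term but negatively inside the radical --- but your proposed resolution fails. The function $g(x) = x + \sqrt{(n-1)[(r-1)nd^2 - x^2]}$ attains its maximum at $x^* = d\sqrt{r-1}$ (setting $g'(x)=0$ gives $(r-1)nd^2 = nx^2$), and for $r \geq 3$ we have $(r-1)d > d\sqrt{r-1}$, so the substitution point $(r-1)d$ lies strictly in the \emph{decreasing} region of $g$: indeed $g'((r-1)d) = 1 - \sqrt{(n-1)(r-1)/(n-r+1)} < 0$ whenever $r \geq 3$ and $n \geq r$. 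So your key claim --- that $(r-1)d$ sits in the increasing region ``in the regime where these bounds are meaningful'' --- is false precisely where it is needed, and from the one-sided estimate $\lambda_1 \leq (r-1)d$ alone the corollary cannot be deduced: if $\lambda_1$ could lie near $d\sqrt{r-1}$, then $g(\lambda_1)$ would exceed $g((r-1)d)$. Only in the graph case $r=2$ does $(r-1)d$ coincide with the maximizer $x^*$, which is why the argument feels plausible.

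The gap closes immediately once you use \emph{both} halves of the lemma that the paper places directly before this corollary. For an $r$-uniform hypergraph the rank and co-rank coincide, $s = r$, and $d$-regularity gives $\delta = \Delta = d$, so the sandwich $(s-1)\delta \leq \lambda_1 \leq (r-1)\Delta$ collapses to the exact value $\lambda_1 = (r-1)d$ (equivalently, $\A\mathbf{1} = (r-1)d\,\mathbf{1}$ together with Perron--Frobenius). With $Z(\h) = nd^2$, which you computed correctly, substituting this \emph{equality} into Theorem \ref{cota-sup2} yields the stated bound directly, and no monotonicity analysis is required at all.
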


Now we will obtain some lower bounds for the adjacency energy.

\begin{Lem}\label{lema-cota1}
	Let $\h=(V,E)$ be a hypergraph with $|V|=n$, then
	$$\E(\h)^2 \geq  2\sum_{i=1}^n \lambda_i^2.$$
	Equality holds only if $\h$ has at most one $2$-uniform complete bipartite connected component and possibly isolated vertices, or if $\h$ has no edges.
\end{Lem}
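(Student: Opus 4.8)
The plan is to establish the inequality $\E(\h)^2 \geq 2\sum_{i=1}^n \lambda_i^2$ by expanding the square of the energy and comparing it against the sum of squared eigenvalues. Writing $\E(\h) = \sum_{i=1}^n |\lambda_i|$, I would square to obtain
$$\E(\h)^2 = \left(\sum_{i=1}^n |\lambda_i|\right)^2 = \sum_{i=1}^n \lambda_i^2 + \sum_{i \neq j} |\lambda_i||\lambda_j| = \sum_{i=1}^n \lambda_i^2 + 2\sum_{i<j}|\lambda_i||\lambda_j|.$$
So the desired inequality is equivalent to showing that the cross-term $2\sum_{i<j}|\lambda_i||\lambda_j|$ is at least $\sum_{i=1}^n \lambda_i^2$. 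The crucial structural fact I would invoke is that the trace of $\A(\h)$ is zero, so $\sum_{i=1}^n \lambda_i = 0$; squaring this gives $0 = \sum_{i=1}^n \lambda_i^2 + 2\sum_{i<j}\lambda_i\lambda_j$, hence $\sum_{i<j}\lambda_i\lambda_j = -\tfrac{1}{2}\sum_{i=1}^n \lambda_i^2$.

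The key step is then to compare $\sum_{i<j}|\lambda_i||\lambda_j|$ with $|\sum_{i<j}\lambda_i\lambda_j|$. Since $|\lambda_i\lambda_j| = |\lambda_i||\lambda_j|$ for each pair, the triangle inequality gives
$$\sum_{i<j}|\lambda_i||\lambda_j| = \sum_{i<j}|\lambda_i\lambda_j| \geq \left|\sum_{i<j}\lambda_i\lambda_j\right| = \frac{1}{2}\sum_{i=1}^n \lambda_i^2.$$
Multiplying by $2$ yields $2\sum_{i<j}|\lambda_i||\lambda_j| \geq \sum_{i=1}^n \lambda_i^2$, and adding $\sum_{i=1}^n \lambda_i^2$ to both sides gives exactly $\E(\h)^2 \geq 2\sum_{i=1}^n \lambda_i^2$, as claimed.

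For the equality characterization, the main subtlety lies in pinning down when the triangle inequality above becomes an equality. Equality in $\sum_{i<j}|\lambda_i\lambda_j| = |\sum_{i<j}\lambda_i\lambda_j|$ forces all the products $\lambda_i\lambda_j$ (for $i<j$) to share a common sign, which together with $\sum \lambda_i = 0$ is very restrictive: it essentially forces the spectrum to consist of at most one positive eigenvalue and one negative eigenvalue (with zeros otherwise), since any two nonzero eigenvalues of the same sign would produce a positive product while a mixed pair produces a negative one. I would then translate this spectral condition back to the hypergraph structure. A spectrum of the form $\{\rho, 0, \ldots, 0, -\rho\}$ with a single positive and single negative nonzero eigenvalue corresponds precisely to a complete bipartite graph (recall Remark \ref{rem:J}, where the matrix $\mathbf{M}$ with off-diagonal blocks $\mathbf{J}$ has eigenvalues $\pm\sqrt{pq}$ and zeros), which is $2$-uniform; the presence of additional connected components would introduce further nonzero eigenvalues and break the sign condition. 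The hard part will be arguing rigorously that the connectivity and uniformity constraints force \emph{at most one} nontrivial complete bipartite component (plus isolated vertices), ruling out the possibility of several same-signed contributions cancelling in a way that still satisfies the sign alignment; this is where the careful case analysis on the structure of the adjacency matrix, rather than the inequality manipulation, carries the weight.
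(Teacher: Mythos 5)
Your proof follows essentially the same route as the paper's: the zero-trace identity $0=\left(\sum_{i=1}^n\lambda_i\right)^2=\sum_{i=1}^n\lambda_i^2+2\sum_{i<j}\lambda_i\lambda_j$, the triangle inequality $\sum_{i<j}|\lambda_i\lambda_j|\geq\left|\sum_{i<j}\lambda_i\lambda_j\right|=\frac{1}{2}\sum_{i=1}^n\lambda_i^2$, and for equality the observation that all products $\lambda_i\lambda_j$ must share a sign, forcing at most one positive and one negative eigenvalue and hence at most one $2$-uniform complete bipartite component plus isolated vertices. The structural translation you flag as the ``hard part'' is treated no more rigorously in the paper itself, so your proposal matches the published argument in both substance and level of detail.
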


\begin{proof}
	First, notice that
	$$0 = \left( \sum_{i=1}^n \lambda_i\right)^2 =  \sum_{i=1}^n \lambda_i^2 + 2\sum_{i<j}\lambda_i\lambda_j.$$
	Therefore,
	$$\sum_{i=1}^n \lambda_i^2 = -2\sum_{i<j}\lambda_i\lambda_j, \quad \Rightarrow \quad \sum_{i<j}|\lambda_i\lambda_j| \stackrel{(1)}{\geq} \left| \sum_{i<j}\lambda_i\lambda_j \right|  = \frac{1}{2}\sum_{i=1}^n \lambda_i^2.$$
	Then,
	$$\E(\h)^2 = \left( \sum_{i=1}^n |\lambda_i|\right)^2 = \sum_{i=1}^n |\lambda_i|^2 + 2\sum_{i<j}|\lambda_i\lambda_j| \stackrel{(2)}{\geq}  2\sum_{i=1}^n \lambda_i^2.$$
	
    Notice that equality $(1)$ and consequently $(2)$ occur only if every non zero eigenvalue have the same signal. If $\h$ has at least three non zero eigenvalues (not necessarily distincts) that is impossible to happen. Therefore $\h$ must have at most one 2-uniform complete bipartite connected component and possibly isolated vertices.
\end{proof}

In what follows, we present three lower bounds for the energy of a hypergraph, based on different parameters.

\begin{Teo}\label{cota-inf1}
	Let $\h=(V,E)$ be a hypergraph with co-rank $s$, $|V|=n$ and average degree $d(\h)$ then
	$$\E(\h) \geq \sqrt{2n(s-1)d(\h)}.$$
	Equality holds if, and only if, $\h$ is the $2$-uniform hypergraph with at most one complete bipartite connected component.
\end{Teo}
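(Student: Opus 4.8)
The plan is to bound $\E(\h)$ from below by combining Lemma~\ref{lema-cota1} with a double-counting estimate of $\sum_{i}\lambda_i^2$ in terms of the co-rank and the degree sequence. By Lemma~\ref{lema-cota1} it suffices to show
$$\sum_{i=1}^n\lambda_i^2 \geq n(s-1)d(\h),$$
since then $\E(\h)^2\geq 2\sum_{i=1}^n\lambda_i^2\geq 2n(s-1)d(\h)$, and the claimed bound follows by taking square roots.

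First I would rewrite the left-hand side using the trace identity already exploited in Theorem~\ref{cota-sup1}, namely $\sum_{i=1}^n\lambda_i^2=\mathrm{Tr}(\A^2)=\sum_{i=1}^n\sum_{j=1}^n d(\{i,j\})^2$. The key observation is that each $d(\{i,j\})$ is a nonnegative integer, so $d(\{i,j\})^2\geq d(\{i,j\})$, whence $\sum_{i,j}d(\{i,j\})^2\geq\sum_{i,j}d(\{i,j\})$. I would then evaluate the right-hand side by double counting over edges: each edge $e$ contributes $|e|-1$ to the inner sum for every vertex it contains, so $\sum_{i,j}d(\{i,j\})=\sum_{e\in E}|e|(|e|-1)$. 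Applying the co-rank bound $|e|\geq s$ gives $|e|(|e|-1)\geq(s-1)|e|$, hence $\sum_{e}|e|(|e|-1)\geq(s-1)\sum_{e}|e|=(s-1)\sum_{v}d(v)=(s-1)\,n\,d(\h)$, which is exactly the quantity we need.

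For the equality analysis I would track the three inequalities used: the one from Lemma~\ref{lema-cota1}, the integrality step $d(\{i,j\})^2\geq d(\{i,j\})$, and the co-rank step $|e|-1\geq s-1$. Equality in the integrality step forces $d(\{i,j\})\in\{0,1\}$, i.e.\ $\h$ is linear; equality in the co-rank step forces every edge to have cardinality exactly $s$, i.e.\ $\h$ is $s$-uniform; and equality in Lemma~\ref{lema-cota1} forces $\h$ to consist of at most one $2$-uniform complete bipartite connected component together with isolated vertices, or to be edgeless. Intersecting these conditions pins down $s=2$ and leaves precisely the $2$-uniform hypergraphs with at most one complete bipartite connected component; conversely I would verify that any such hypergraph realizes all three equalities simultaneously (linearity and $2$-uniformity are immediate, and the complete bipartite case is exactly the equality case of Lemma~\ref{lema-cota1}).

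The main obstacle is the equality characterization rather than the inequality itself: I must check that the (stronger) equality condition coming from Lemma~\ref{lema-cota1} is compatible with, and indeed forces, the linearity and uniformity conditions, so that the resulting class coincides exactly with the stated one. The inequality chain is routine once the integrality trick $d(\{i,j\})^2\geq d(\{i,j\})$ is spotted, since that is the step which converts a sum of squares into the linear degree data $\sum_{v}d(v)=n\,d(\h)$.
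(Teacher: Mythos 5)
Your proposal is correct and takes essentially the same route as the paper: the paper's proof is exactly the chain $\E(\h)^2\geq 2\sum_{i=1}^n\lambda_i^2=2\sum_{i,j}d(\{i,j\})^2\geq 2\sum_{i,j}d(\{i,j\})\geq 2n(s-1)d(\h)$, using Lemma~\ref{lema-cota1}, the trace identity, the integrality step and the co-rank bound, with your edge double-counting being just the explicit form of the last inequality. If anything, you are more thorough than the paper, which asserts the equality characterization without tracking the three inequalities as you do.
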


\begin{proof}	
	It suffices to note that
	$$\E(\h)^2\geq2\sum_{i=1}^n\lambda^2_i=2\sum_{i=1}^n\sum_{j=1}^nd(i,j)^2\stackrel{(3)}{\geq}2\sum_{i=1}^n\sum_{j=1}^nd(i,j)\geq2n(s-1)d(\h).$$
\end{proof}

\begin{Teo}\label{cota-inf2}
	Let $\h=(V,E)$ be a hypergraph with co-rank $s$ and $|V|=n$, then
	$$\E(\h) \geq \sqrt{\frac{2(s-1)^2}{n}Z(\h)}.$$
	Equality holds if, and only if, $\h$ is the $2$-uniform hypergraph with only one edge and no isolated vertices or if $\h$ has no edges.
\end{Teo}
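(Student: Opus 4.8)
The plan is to follow the same two-step strategy used in Theorem \ref{cota-inf1}: start from the lower bound $\E(\h)^2 \geq 2\sum_{i=1}^n \lambda_i^2$ provided by Lemma \ref{lema-cota1}, and then bound $\sum_{i=1}^n \lambda_i^2 = \mathrm{Tr}(\A^2)$ from below by a suitable function of the Zagreb index. The difference from the previous theorem is that here we want $Z(\h) = \sum_i d(i)^2$ to appear, so the key is to relate $\sum_{i,j} d(\{i,j\})^2$ to $\sum_i d(i)^2$ rather than to the average degree.

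First I would write $\sum_{i=1}^n \lambda_i^2 = \mathrm{Tr}(\A^2) = \sum_{i=1}^n \sum_{j=1}^n d(\{i,j\})^2$. The main estimate needed is a lower bound of the form $\sum_{j} d(\{i,j\})^2 \geq c\, d(i)^2$ for each fixed vertex $i$, which would let me sum over $i$ to recover $Z(\h)$. Since each edge containing $i$ contributes its co-rank-minus-one (at least $s-1$) neighbors $j$ with $d(\{i,j\}) \geq 1$, the relevant inequality is a Cauchy--Schwarz / power-mean step: fixing $i$, the quantities $d(\{i,j\})$ summed over the at most $n-1$ possible neighbors satisfy $\sum_j d(\{i,j\})^2 \geq \frac{1}{n}\left(\sum_j d(\{i,j\})\right)^2 \geq \frac{1}{n}((s-1)d(i))^2 = \frac{(s-1)^2}{n}d(i)^2$, using that $\sum_j d(\{i,j\}) \geq (s-1)d(i)$ because each of the $d(i)$ edges through $i$ has at least $s-1$ other vertices. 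Summing over all $i$ then gives $\sum_{i=1}^n \lambda_i^2 \geq \frac{(s-1)^2}{n}Z(\h)$, and combining with Lemma \ref{lema-cota1} yields $\E(\h)^2 \geq \frac{2(s-1)^2}{n}Z(\h)$, which is exactly the claimed bound after taking square roots.

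The main obstacle is the equality analysis, since tracking when a chain of inequalities becomes tight is always the delicate part. Equality in Lemma \ref{lema-cota1} forces $\h$ to have at most one $2$-uniform complete bipartite connected component plus isolated vertices. Equality in the Cauchy--Schwarz step $\sum_j d(\{i,j\})^2 \geq \frac{1}{n}(\sum_j d(\{i,j\}))^2$ requires the summands $d(\{i,j\})$ to be constant across all $n-1$ choices of $j$, which is extremely restrictive; and equality in $\sum_j d(\{i,j\}) \geq (s-1)d(i)$ forces the hypergraph to be uniform of rank $s$ and linear in a strong sense. Reconciling all three simultaneously should collapse the structure to a single edge of size $2$ with no isolated vertices (or the edgeless hypergraph), which is the stated equality case.

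I would therefore expect the bulk of the writing to be the equality discussion rather than the inequality itself, and I would present the inequality chain as a short displayed computation mirroring the proof of Theorem \ref{cota-inf1}, followed by a paragraph resolving the three equality conditions together to identify the extremal hypergraph as the $2$-uniform hypergraph with a single edge and no isolated vertices.
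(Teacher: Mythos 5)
Your proposal follows exactly the paper's route: it invokes Lemma \ref{lema-cota1} to get $\E(\h)^2 \geq 2\sum_i \lambda_i^2 = 2\sum_{i,j} d(\{i,j\})^2$, then applies the per-vertex Cauchy--Schwarz estimate $\sum_j d(\{i,j\})^2 \geq \frac{1}{n}\bigl(\sum_j d(\{i,j\})\bigr)^2$ together with $\sum_j d(\{i,j\}) \geq (s-1)d(i)$ and sums over $i$ to produce $\frac{(s-1)^2}{n}Z(\h)$, which is precisely the chain the paper displays. Your equality discussion is only sketched (``should collapse''), but it is at the same level of detail as the paper's own one-line remark, so the proposal is correct and essentially identical in approach.
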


\begin{proof}
	It suffices to note that
	$$\E(\h)^2\geq2\sum_{i=1}^n\sum_{j=1}^nd(i,j)^2\stackrel{(4)}{\geq}2\sum_{i=1}^n\left( \frac{1}{n}\left( \sum_{j=1}^nd(i,j)\right)^2 \right) \geq\frac{2(s-1)^2}{n}Z(\h).$$
	
	Notice that equality $(4)$ holds only if $d(i,j)$ is constant for each $i$. The only complete bipartite graph with this property is the edge.
\end{proof}

\begin{Obs}\label{obs-cota2}
	Let $\h$ be a hypergraph with $n$ vertices and co-rank $s$. We can compare the bounds provided by these two theorems. We define $b(\h) = \sqrt{2n(s-1)d(\h)}$ e $B(\h) = \sqrt{\frac{2(s-1)^2}{n}Z(\h)}$. It is easy to see that:
	
	\begin{enumerate}
		\item If $\frac{\delta^2}{\Delta}\geq\frac{n}{s-1}$, then $b(\h) \leq \sqrt{2n(s-1)\Delta} \leq \sqrt{2(s-1)^2\delta^2} \leq B(\h).$
		
		\item If $\frac{\Delta^2}{\delta}\leq\frac{n}{s-1}$, then $B(\h) \leq \sqrt{2(s-1)^2\Delta^2} \leq \sqrt{2n(s-1)\delta} \leq b(\h).$
		
		\item In particular, if $\h$ is $d$-regular, then:
		\begin{enumerate}
			\item If $d=\frac{n}{s-1}$, then $b(\h)=B(\h)$.
			\item If $d>\frac{n}{s-1}$, then $b(\h)<B(\h)$.
			\item If $d<\frac{n}{s-1}$, then $b(\h)>B(\h)$.
		\end{enumerate}
		
	\end{enumerate}
\end{Obs}

\begin{Teo}\label{cota-inf3}
	Let $\h$ be a hypergraph, then $$\E(\h) \geq \sqrt{\frac{2n}{n-1}\lambda_1^2}.$$
	Equality holds if, and only if, $\h$ is the $2$-uniform hypergraph with one edge and two vertices or if $\h$ has no edges.
\end{Teo}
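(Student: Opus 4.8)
The plan is to combine Lemma \ref{lema-cota1} with a Cauchy--Schwarz estimate that exploits the vanishing trace of $\A(\h)$. First I would invoke Lemma \ref{lema-cota1} to obtain $\E(\h)^2 \geq 2\sum_{i=1}^n \lambda_i^2$, which reduces the problem to bounding $\sum_{i=1}^n \lambda_i^2$ from below by $\frac{n}{n-1}\lambda_1^2$. Note that $\lambda_1 \geq 0$, since $\sum_{i=1}^n \lambda_i = \mathrm{Tr}(\A(\h)) = 0$ forces the largest eigenvalue to be nonnegative.

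For the lower bound on $\sum_{i=1}^n \lambda_i^2$, the key observation is precisely that the trace vanishes, so $\sum_{i=2}^n \lambda_i = -\lambda_1$. Applying the Cauchy--Schwarz inequality to the $n-1$ numbers $\lambda_2,\ldots,\lambda_n$ gives $\lambda_1^2 = \left(\sum_{i=2}^n \lambda_i\right)^2 \leq (n-1)\sum_{i=2}^n \lambda_i^2$, hence $\sum_{i=2}^n \lambda_i^2 \geq \frac{\lambda_1^2}{n-1}$. Adding $\lambda_1^2$ to both sides yields $\sum_{i=1}^n \lambda_i^2 \geq \lambda_1^2\left(1 + \frac{1}{n-1}\right) = \frac{n}{n-1}\lambda_1^2$. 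Substituting this into the inequality from Lemma \ref{lema-cota1} produces $\E(\h)^2 \geq \frac{2n}{n-1}\lambda_1^2$, and taking square roots finishes the bound.

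The main work is the equality analysis, which demands that both inequalities be tight simultaneously. The Cauchy--Schwarz step is an equality exactly when $\lambda_2 = \cdots = \lambda_n$, while equality in Lemma \ref{lema-cota1} forces $\h$ to consist of at most one $2$-uniform complete bipartite connected component $K_{p,q}$ together with possibly some isolated vertices, or to have no edges. I would then use Remark \ref{rem:J}: such a component contributes eigenvalues $\pm\sqrt{pq}$ and otherwise zeros, so $\lambda_1 = \sqrt{pq}$, $\lambda_n = -\sqrt{pq}$, and every remaining eigenvalue vanishes. Imposing $\lambda_2 = \cdots = \lambda_n$ then rules out any zero eigenvalue lying strictly between $\lambda_1$ and $\lambda_n$, which (outside the edgeless case) can only happen when $n = 2$ and $p = q = 1$, that is, when $\h = K_2$ is the single edge on two vertices. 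I expect this simultaneous-equality bookkeeping to be the delicate step, since one must carefully exclude all complete bipartite graphs with $pq > 1$ as well as every configuration carrying an isolated vertex.
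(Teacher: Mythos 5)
Your proposal is correct and follows essentially the same route as the paper's proof: Lemma \ref{lema-cota1} combined with the Cauchy--Schwarz estimate $\left(\sum_{i=2}^n \lambda_i\right)^2 \leq (n-1)\sum_{i=2}^n \lambda_i^2$ applied via the trace-zero identity $\sum_{i=2}^n \lambda_i = -\lambda_1$. Your equality analysis is in fact slightly more careful than the paper's (which only remarks that $|\lambda_2|=\cdots=|\lambda_n|$ forces $K_2$ among complete bipartite graphs), since you explicitly use the $\pm\sqrt{pq}$ spectrum of $K_{p,q}$ to exclude $pq>1$ and isolated vertices.
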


\begin{proof}
	Notice that $$\sum_{i=1}^n \lambda_i^2 = \lambda_1^2 + \sum_{i=2}^n \lambda_i^2 \stackrel{(cs)}{\geq} \lambda_1^2 +\frac{1}{n-1} (\sum_{i=2}^n \lambda_i)^2 =  \lambda_1^2 + \frac{(-\lambda_1)^2}{n-1} = \frac{n}{n-1}\lambda_1^2.$$
	
	Therefore,
	$$ \E(\h)^2 \geq  2\sum_{i=1}^n\lambda_i^2 \geq \frac{2n}{n-1}\lambda_1^2.$$
	
	Equality $(cs)$ only is true when $|\lambda_2|=|\lambda_3|= \ldots = |\lambda_n|$. For complete  bipartite graphs, this only holds when $\h=K_2.$
\end{proof}

Note that  the proofs of the previous theorems leads immediately to distinct low bounds  of $\sum_{i=1}^n\lambda_i^2$. More specifically, we have the following lemma, which will be useful later.

\begin{Lem} \label{obs-cota-inf}
Let $\h$ be a hypergraph with co-rank $s$ and $|V|=n.$
	\begin{enumerate}
		\item
		$\sum_{i=1}^n\lambda_i^2 \geq n(s-1)d(\h).$
		
		\item
		$\sum_{i=1}^n\lambda_i^2 \geq \frac{(s-1)^2}{n}Z(\h).$
		
		\item
		$\sum_{i=1}^n\lambda_i^2 \geq  \frac{n}{n-1}\lambda_1^2.$
	\end{enumerate}
	
\end{Lem}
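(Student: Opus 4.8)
The plan is to observe that this lemma requires no new work: each of its three inequalities is precisely an intermediate step already established inside the proofs of Theorems \ref{cota-inf1}, \ref{cota-inf2} and \ref{cota-inf3}. All three of those lower bounds begin by invoking Lemma \ref{lema-cota1} in the form $\E(\h)^2 \geq 2\sum_{i=1}^n \lambda_i^2$ and then bound the quantity $2\sum_{i=1}^n\lambda_i^2$ from below. Stripping away the outer application of Lemma \ref{lema-cota1} and the common factor of $2$ isolates exactly the claimed estimates on $\sum_{i=1}^n\lambda_i^2$. So I would present the proof as a direct extraction of those inner chains of inequalities.

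Concretely, for parts (1) and (2) I would start from the identity $\sum_{i=1}^n\lambda_i^2 = \mathrm{Tr}(\A^2) = \sum_{i=1}^n\sum_{j=1}^n d(i,j)^2$. For part (1) I would use that each $d(i,j)$ is a nonnegative integer, whence $d(i,j)^2 \geq d(i,j)$, and then the co-rank counting identity $\sum_{j} d(i,j) = \sum_{e\in E_{[i]}}(|e|-1) \geq (s-1)d(i)$, together with $\sum_i d(i) = n\,d(\h)$, to reach $\sum_{i=1}^n\lambda_i^2 \geq n(s-1)d(\h)$. For part (2) I would instead apply Cauchy--Schwarz to the $n$ terms $d(i,j)$ with $i$ fixed, giving $\sum_j d(i,j)^2 \geq \tfrac{1}{n}\bigl(\sum_j d(i,j)\bigr)^2$, and then the same co-rank bound $\sum_j d(i,j) \geq (s-1)d(i)$ followed by summation over $i$, which produces $\sum_{i=1}^n\lambda_i^2 \geq \tfrac{(s-1)^2}{n}Z(\h)$.

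For part (3) the argument is purely spectral: since $\mathrm{Tr}(\A)=0$ one has $\sum_{i=2}^n\lambda_i = -\lambda_1$, and applying Cauchy--Schwarz to the $n-1$ tail eigenvalues gives $\sum_{i=2}^n\lambda_i^2 \geq \tfrac{1}{n-1}\bigl(\sum_{i=2}^n\lambda_i\bigr)^2 = \tfrac{\lambda_1^2}{n-1}$, so that $\sum_{i=1}^n\lambda_i^2 = \lambda_1^2 + \sum_{i=2}^n\lambda_i^2 \geq \tfrac{n}{n-1}\lambda_1^2$. This is verbatim the displayed inequality in the proof of Theorem \ref{cota-inf3}.

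There is no genuine obstacle here; the only step carrying any combinatorial content is the co-rank estimate $\sum_j d(i,j) \geq (s-1)d(i)$, which I would justify once by the edge-counting identity above and reuse in both (1) and (2). Since every ingredient already appears in the preceding three theorems, I would keep the write-up short, either citing those proofs directly or reproducing the three two-line inequality chains.
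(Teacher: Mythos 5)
Your proposal is correct and matches the paper's own treatment: the paper explicitly justifies Lemma \ref{obs-cota-inf} by noting that each inequality is the inner chain already established in the proofs of Theorems \ref{cota-inf1}, \ref{cota-inf2} and \ref{cota-inf3}, which is exactly the extraction you perform. Your added justification of the co-rank estimate $\sum_j d(\{i,j\}) = \sum_{e \in E_{[i]}}(|e|-1) \geq (s-1)d(i)$ is a welcome detail the paper leaves implicit, but it does not change the route.
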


\begin{Lem} \label{lema-cota2}
	Let $\h=(V,E)$ be a hypergraph with $|V|=n$, then
	$$\E(\h)^2 \geq  \sum_{i=1}^n \lambda_i^2 + n(n-1)|\det(\A)|^{\frac{2}{n}}.$$
	Equality holds if $|\lambda_i| = |\lambda_j|$ for every $i \neq j$, that is, if $\h$ is the graph with one edge and two vertices .
\end{Lem}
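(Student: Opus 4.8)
The plan is to combine the energy expansion used throughout this section with a classical inequality relating the arithmetic and geometric means of the off-diagonal products $|\lambda_i\lambda_j|$. Writing $\E(\h)^2 = \sum_{i=1}^n |\lambda_i|^2 + 2\sum_{i<j}|\lambda_i\lambda_j|$ exactly as in Lemma \ref{lema-cota1}, I would isolate the cross term $2\sum_{i<j}|\lambda_i\lambda_j|$ and bound it from below. There are $\binom{n}{2}=\frac{n(n-1)}{2}$ terms in this sum, so by the AM--GM inequality applied to these nonnegative quantities,
\[
\frac{2}{n(n-1)}\sum_{i<j}|\lambda_i\lambda_j| \;\geq\; \left(\prod_{i<j}|\lambda_i\lambda_j|\right)^{\frac{2}{n(n-1)}}.
\]
The key observation is that in each product $\prod_{i<j}|\lambda_i\lambda_j|$, every eigenvalue $|\lambda_k|$ appears exactly $n-1$ times (paired once with each of the other $n-1$ eigenvalues), so the product equals $\left(\prod_{k=1}^n|\lambda_k|\right)^{n-1} = |\det(\A)|^{n-1}$, using that $\det(\A)=\prod_k \lambda_k$.

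Substituting this into the exponent, the geometric-mean term becomes $\left(|\det(\A)|^{n-1}\right)^{\frac{2}{n(n-1)}} = |\det(\A)|^{\frac{2}{n}}$. Multiplying back through by $\frac{n(n-1)}{2}$ and then by the factor $2$ from the cross term, I obtain
\[
2\sum_{i<j}|\lambda_i\lambda_j| \;\geq\; n(n-1)\,|\det(\A)|^{\frac{2}{n}}.
\]
Adding $\sum_{i=1}^n |\lambda_i|^2 = \sum_{i=1}^n \lambda_i^2$ to both sides and recalling the expansion of $\E(\h)^2$ then yields the stated bound $\E(\h)^2 \geq \sum_{i=1}^n \lambda_i^2 + n(n-1)|\det(\A)|^{\frac{2}{n}}$ directly.

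For the equality discussion, the AM--GM step is tight precisely when all the products $|\lambda_i\lambda_j|$ are equal, which forces $|\lambda_i|=|\lambda_j|$ for all pairs $i\neq j$, matching the claimed condition. I would then note, as the paper does in the analogous Lemma \ref{lema-cota1} and Theorem \ref{cota-inf3}, that among hypergraphs the only way to have all eigenvalues of equal absolute value (with the trace-zero constraint forcing cancellation of signs) is the single-edge $2$-graph $K_2$, whose spectrum is $\{1,-1\}$. The main subtlety to handle carefully is the degenerate case where $\det(\A)=0$: the inequality still holds trivially since the added term vanishes, but the equality characterization must be read as a sufficient condition (the lemma states ``Equality holds if,'' not ``if and only if''), so I would not need to rule out further equality cases, only to verify the displayed case produces equality. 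I expect the bookkeeping of the exponent $\frac{2}{n(n-1)}$ interacting with the multiplicity $n-1$ of each eigenvalue in the product to be the step most prone to arithmetic slips, so that is where I would be most careful.
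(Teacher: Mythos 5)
Your proof is correct and follows essentially the same route as the paper: both apply the AM--GM inequality to the off-diagonal products $|\lambda_i\lambda_j|$, identify the resulting geometric mean with $|\det(\A)|^{\frac{2}{n}}$, and add back $\sum_i \lambda_i^2$; your use of unordered pairs $i<j$ (product $|\det(\A)|^{n-1}$ over $\binom{n}{2}$ terms) versus the paper's ordered pairs $i\neq j$ (product $|\det(\A)|^{2(n-1)}$ over $n(n-1)$ terms) is a purely cosmetic difference. Your handling of the equality claim as a sufficient condition is also appropriate, since in the degenerate case where the common product value is zero, equal products need not force equal moduli, exactly as you suspected.
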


\begin{proof}
	First, notice that
	$$\frac{1}{n(n-1)} \sum_{i \neq j}|\lambda_i \lambda_j|  \stackrel{(1)}{\geq} \prod_{i \neq j} |\lambda_i \lambda_j|^\frac{1}{n(n-1)} = \prod_{i=1}^n |\lambda_i|^\frac{2}{n} = |\det(\A)|^\frac{2}{n} $$
	Therefore,
	$$\E(\h)^2 = ( \sum_{i=1}^n |\lambda_i|)^2 =  \sum_{i=1}^n \lambda_i^2 +  \sum_{i \neq j} |\lambda_i \lambda_j| \geq \sum_{i=1}^n \lambda_i^2 + n(n-1)|\det(\A)|^{\frac{2}{n}}. $$
	
	Where $(1)$ holds because the arithmetic mean of non negative numbers is greater or equal than the geometric mean. That is:
	$$\frac{x_1+ \ldots + x_n}{n} \geq (x_1 \ldots x_n)^\frac{1}{n}$$
	and equality holds if, and only if, $x_1= \ldots = x_n.$
\end{proof}

From these lemmas, we establish other lower bounds for the energy of a hypergraph, depending on the determinant of its adjacency matrix.

\begin{Teo} \label{cota-inf4}
	Let $\h=(V,E)$ be a hypergraph with co-rank $s$ and $|V|=n$, then
	$$\E(\h) \geq \sqrt{n(s-1) d(\h) + n(n-1)|\det(\A)|^{\frac{2}{n}}}.$$
\end{Teo}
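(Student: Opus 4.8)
The plan is to assemble the bound directly from the two preceding lemmas, since both pieces of the right-hand side have already been isolated. The key observation is that Lemma~\ref{lema-cota2} furnishes the inequality
$$\E(\h)^2 \geq \sum_{i=1}^n \lambda_i^2 + n(n-1)|\det(\A)|^{\frac{2}{n}},$$
in which the term $\sum_{i=1}^n \lambda_i^2$ appears explicitly and has not yet been estimated. The task therefore reduces to substituting a suitable lower bound for this trace-of-square term.

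For that substitution I would invoke Lemma~\ref{obs-cota-inf}, part~(1), which states precisely that
$$\sum_{i=1}^n \lambda_i^2 \geq n(s-1)d(\h),$$
where $s$ is the co-rank of $\h$. This bound came out of the chain of estimates in the proof of Theorem~\ref{cota-inf1}, using $\sum_{i=1}^n\lambda_i^2 = \sum_{i,j} d(\{i,j\})^2 \geq \sum_{i,j} d(\{i,j\}) \geq n(s-1)d(\h)$, and it is exactly the form needed to match the first summand in the claimed bound.

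Combining these two displayed inequalities gives
$$\E(\h)^2 \geq n(s-1)d(\h) + n(n-1)|\det(\A)|^{\frac{2}{n}},$$
and taking the (nonnegative) square root of both sides yields the stated inequality. I do not anticipate a genuine obstacle here: the result is a one-line consequence of chaining Lemma~\ref{lema-cota2} with Lemma~\ref{obs-cota-inf}(1). The only point requiring a moment's care is that the right-hand side under the root is manifestly nonnegative (each summand is a product of nonnegative quantities), so the square-root step is legitimate; since $\E(\h)\geq 0$ as well, passing from the squared inequality to the unsquared one is valid without any sign ambiguity.
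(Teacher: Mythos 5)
Your proof is correct and matches the paper's own argument exactly: the paper's proof of Theorem~\ref{cota-inf4} is precisely to combine Lemma~\ref{lema-cota2} with part~(1) of Lemma~\ref{obs-cota-inf} and take square roots. Your added remarks on nonnegativity of both sides are a harmless (and valid) elaboration of the same one-line argument.
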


\begin{proof}
	Use Lemma  \ref{lema-cota2} and Remark \ref{obs-cota-inf}  $(1)$.
\end{proof}

\begin{Teo} \label{cota-inf5}
	Let $\h=(V,E)$ be a hypergraph with co-rank $s$ and $|V|=n$, then
	$$\E(\h) \geq \sqrt{\frac{(s-1)^2}{n}Z(\h) + n(n-1)|\det(\A)|^{\frac{2}{n}}}.$$
\end{Teo}

\begin{proof}
	Use Lemma \ref{lema-cota2} and Remark \ref{obs-cota-inf}  $(2)$.
\end{proof}

\begin{Teo}\label{cota-inf6}
	Let $\h=(V,E)$ be a hypergraph, then
	$$\E(\h) \geq \sqrt{\frac{n}{n-1}\lambda_1^2 + n(n-1)|\det(\A)|^{\frac{2}{n}}}.$$
\end{Teo}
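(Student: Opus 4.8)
The plan is to mirror the proofs of Theorems \ref{cota-inf4} and \ref{cota-inf5} exactly, swapping in the third lower bound for $\sum_{i=1}^n\lambda_i^2$ supplied by Lemma \ref{obs-cota-inf} part $(3)$. The entire analytic content has already been isolated: Lemma \ref{lema-cota2} gives the general inequality
\[
\E(\h)^2 \geq \sum_{i=1}^n \lambda_i^2 + n(n-1)|\det(\A)|^{\frac{2}{n}},
\]
which holds for every hypergraph with no hypotheses beyond $|V|=n$, and Lemma \ref{obs-cota-inf} part $(3)$ gives
\[
\sum_{i=1}^n \lambda_i^2 \geq \frac{n}{n-1}\lambda_1^2 .
\]

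First I would substitute the second inequality into the first. Since the right-hand side of Lemma \ref{lema-cota2} is monotone increasing in the quantity $\sum_{i=1}^n \lambda_i^2$ (the determinant term does not involve it), replacing $\sum_{i=1}^n \lambda_i^2$ by its lower bound $\frac{n}{n-1}\lambda_1^2$ preserves the inequality, yielding
\[
\E(\h)^2 \geq \frac{n}{n-1}\lambda_1^2 + n(n-1)|\det(\A)|^{\frac{2}{n}}.
\]
Taking square roots of both sides (both are non-negative) gives exactly the claimed bound. This is the complete argument; the proof is the one-line invocation ``Use Lemma \ref{lema-cota2} and Remark \ref{obs-cota-inf} $(3)$,'' in perfect parallel with the two preceding theorems.

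There is no real obstacle here, since both ingredients are established facts from earlier in the section; the only thing to verify is that the two lemmas apply without extra hypotheses. Lemma \ref{lema-cota2} is stated for any hypergraph with $n$ vertices, and Lemma \ref{obs-cota-inf} part $(3)$ likewise requires only $|V|=n$, so both are available for an arbitrary $\h$, which matches the statement of the theorem (note that, unlike Theorems \ref{cota-inf4} and \ref{cota-inf5}, this statement imposes no co-rank parameter, consistent with part $(3)$ of Lemma \ref{obs-cota-inf} not involving $s$). If I wanted to record an equality case I would trace it through Lemma \ref{lema-cota2}, where equality forces all $|\lambda_i|$ equal, combined with the equality condition of the Cauchy–Schwarz step behind part $(3)$ of Lemma \ref{obs-cota-inf}; this again singles out $\h=K_2$ (up to isolated vertices) as in Theorem \ref{cota-inf3}, though the stated theorem does not demand it.
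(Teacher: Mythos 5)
Your proposal is correct and follows exactly the paper's own proof, which simply invokes Lemma \ref{lema-cota2} together with part $(3)$ of Lemma \ref{obs-cota-inf}; you have merely spelled out the substitution and square-root step that the paper leaves implicit. Your side remarks on applicability and the equality case are consistent with the paper and add nothing that conflicts with it.
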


\begin{proof}
	Use Lemma  \ref{lema-cota2} and Remark \ref{obs-cota-inf}  $(3)$.
\end{proof}

Note that the bounds obtained in Theorems \ref{cota-inf4} and \ref{cota-inf5} can also be compared, following the same cases listed in Remark \ref{obs-cota2}.

Finally, the next remark allow to compare the bounds of the last three theorems with the bounds of the three preceding theorems.

\begin{Obs} Regarding the bounds of Lemmas \ref{lema-cota1} and \ref{lema-cota2}, we have four cases:

\begin{enumerate}
		
		\item If $\h$ is a hypergraph that has zero as an eigenvalue, we have that the bound of Lemma  \ref{lema-cota1} is sharper than the bound of Lemma  \ref{lema-cota2} (consequently the bounds from Theorems \ref{cota-inf1}, \ref{cota-inf2} and \ref{cota-inf3} are better than the respective bounds from Theorems \ref{cota-inf4}, \ref{cota-inf5} and \ref{cota-inf6}).

		\item If $\g$ is a graph with $|\det(\A(\g)| \geq 1$ then the bound from Lemma \ref{lema-cota2} is better than the boudn from Lemma \ref{lema-cota1}.

			Indeed, we can assume that $\g$ has no isolated vertices.
			Saying that \ref{lema-cota2} is better than Lemma  \ref{lema-cota1} means that
			$$\E(\h)^2 \geq  \sum_{i=1}^n \lambda_i^2 + n(n-1)|\det(\A)|^{\frac{2}{n}} \geq  2\sum_{i=1}^n \lambda_i^2.$$
			And this is true if, and only if,
			$$ n(n-1)|\det(\A)|^{\frac{2}{n}} \geq  \sum_{i=1}^n \lambda_i^2.$$
			and in fact
			$$  n(n-1)|\det(\A)|^{\frac{2}{n}} \geq n(n-1) = 2\frac{n(n-1)}{2} \geq 2m =  \sum_{i=1}^n \lambda_i^2.$$

		\item We have that $(2)$ does not hold for hypergraphs in general. For exemple: Let $\h=(V,E)$ be a hypergraph with vertices $V= \left\{1,2,3,4,5,6,7,8\right\}$ and edges $$E=\left\{1234, 1237, 1238, 1256, 126, 1278, 3456, 3478, 5678\right\},$$ we have $|\det(\A(\h))|=252$ and Lemma \ref{lema-cota1} is sharper than Lemma  \ref{lema-cota2}, because :
		$$ \sum_{i=1}^n \lambda_i^2 >253 > 224 > n(n-1)|\det(\A(\h))|^{\frac{2}{n}}$$

		\item Let $\h$ be a hypergraph with rank $r$ and $|\det(\A(\h)| \geq 1.$ If $n \geq (r-1)\Delta^2+1$, the the bound of Lemma \ref{lema-cota2} is better than Lemma \ref{lema-cota1}.

			Indeed, notice that $$\sum_{i=1}^n \lambda_i^2 \leq (r-1)n\Delta^2$$.
			
			Therefore $$ n(n-1)|\det(\A)|^{\frac{2}{n}} \geq n(n-1) \geq  n(r-1)\Delta^2 \geq \sum_{i=1}^n \lambda_i^2 .$$	
	\end{enumerate}
	
\end{Obs}

\section{Conclusion}
In this paper, we made contributions to spectral hypergraph theory. More precisely, we define the energy of a hypergraph as the energy of its adjacency matrix, and study its properties. We obtain which hypergraphs have the highest and lowest energy within the class of hyperstars. By obtaining spectral properties of operations sum and product of hypergraphs, we prove that the energy of a hypergraph is never an odd number. We study how the hypergraph energy varies when we remove a vertex or an edge from it, and we further define an edge division operation and analyze how an edge division impacts the energy of a hypergraph. Our main results are the determination of bounds for the energy. These bounds are functions of well known parameters, such as maximum and minimum degree, Zagreb index and spectral radius.

We end this paper presenting some open problems about energies of hypergraphs.

\begin{enumerate}
	\item In \cite{Kaue-lap}, the authors define the signless Laplacian matrix for general hypergraphs, so we believe that many results proven in \cite{Kaue-energia} can be generalized to non-uniform hypergraphs using some of the techniques that we used here.
	
	\item We have determined which hyperstar with $t$ vertices have  highest and lowest energy. An interesting problem is to determine in other classes, which is the hypergraph with the highest and lowest energy.
\end{enumerate}

\section*{Acknowledgments}

This article is part of the PhD thesis of Lucas L. Portugal, supervised by Renata R. Del-Vecchio. Lucas L. Portugal acknowledges the financial support provided  by CAPES and Renata R. Del-Vecchio acknowledges the financial support  by  CNPq grant 306262/2019-3.
This research took place while Vilmar Trevisan visited the ‘Dipartimento di Matematica e Applicazioni’, University of Naples ‘Federico II’, Italy and this author acknowledges the financial support provided by the hosting University, and by CAPES-Print 88887.467572/2019-00, Brazil, as well as the partial support of CNPq grants 409746/2016-9 and 303334/2016-9, and FAPERGS grant PqG 17/2551-0001.

\bibliographystyle{acm}
\bibliography{Bibliografia}

\begin{thebibliography}{10}

\bibitem{Banerjee-matriz}
{\sc Banerjee, A.}
\newblock On the spectrum of hypergraphs.
\newblock {\em Linear Algebra and its Applications\/} (2020),
  doi.org/10.1016/j.laa.2020.01.012.

\bibitem{energy-odd1}
{\sc Bapat, R., and Pati, S.}
\newblock Energy of a graph is never an odd integer.
\newblock {\em Bull. Kerala Math. Assoc. 1\/} (2004), 129--132.

\bibitem{Kaue-powers}
{\sc Cardoso, K., Hoppen, C., and Trevisan, V.}
\newblock The spectrum of a class of uniform hypergraphs.
\newblock {\em Linear Algebra and its Applications 590\/} (2020), 243--257.

\bibitem{Kaue-energia}
{\sc Cardoso, K., and Trevisan, V.}
\newblock Energies of hypergraphs.
\newblock {\em arXiv:1912.03224\/} (2019).

\bibitem{Kaue-lap}
{\sc Cardoso, K., and Trevisan, V.}
\newblock The signless laplacian matrix of hypergraphs.
\newblock {\em arXiv:1909.00246\/} (2019).

\bibitem{Cooper}
{\sc Cooper, J., and Dutle, A.}
\newblock Spectra of uniform hypergraphs.
\newblock {\em Linear Algebra Appl. 436\/} (2012), 3268--3292.

\bibitem{Reff2019}
{\sc Duttweiler, L., and Reff, N.}
\newblock Spectra of cycle and path families of oriented hypergraphs.
\newblock {\em Linear Algebra and its Applications 578\/} (2019), 251--271.

\bibitem{Feng}
{\sc Feng, K., Ching, W., and Li, W.}
\newblock Spectra of hypergraphs and applications.
\newblock {\em Journal of number theory 60\/} (1996), 1--22.

\bibitem{linear-alg}
{\sc Franklin, J.}
\newblock {\em Matrix theory}.
\newblock Dover Publications, 2000.

\bibitem{energy-Gutman}
{\sc Gutman, I.}
\newblock The energy of a graph.
\newblock {\em Ber. Math.-Statist. Sekt. Forschungsz. Graz 103\/} (1978),
  1--22.

\bibitem{energy-Zagreb}
{\sc Gutman, I., and Das, C.}
\newblock The first zagreb index 30 years after.
\newblock {\em MATCH Communications in Mathematical and in Computer Chemistry
  50\/} (2004), 83–92.

\bibitem{NP-hard}
{\sc Hillar, C., and Lim, L.}
\newblock Most tensor problems are np-hard.
\newblock {\em Journal of the ACM 60\/} (2013), 1--39.

\bibitem{HornJohnson}
{\sc Horn, R., and Johnson, C.}
\newblock {\em Matrix Analysis}.
\newblock Cambridge, 2013.

\bibitem{energy-huckel}
{\sc H\"{u}ckel, E.}
\newblock Quantentheoretische beitrage zum benzolproblem.
\newblock {\em Z. Phys. 70\/} (1931), 204--286.

\bibitem{energy-edge-deletion}
{\sc Jane~Day, W.~S.}
\newblock Graph energy change due to edge deletion.
\newblock {\em Linear Algebra and its Applications 428\/} (2008), 2070–2078.

\bibitem{kr-regular}
{\sc Kumar, K., and Varghese, R.}
\newblock Spectrum of (k,r)-regular hypergraphs.
\newblock {\em International Journal of Mathematical Combinatorics 2\/} (2017),
  52--59.

\bibitem{energy-book}
{\sc Li, X., Shi, Y., and Gutman, I.}
\newblock {\em Graph Energy}.
\newblock Springer, 2012.

\bibitem{matrix-pec-radius-2017}
{\sc Lin, H., and Zhou, B.}
\newblock Spectral radius of uniform hypergraphs.
\newblock {\em Linear Algebra and its Applications 527\/} (2017), 32--52.

\bibitem{estrada-index}
{\sc Lu, H., Xue, N., and Zhu, Z.}
\newblock On the signless laplacian estrada index of uniform hypergraphs.
\newblock {\em International Journal of Quantum Chemistry n/a}, n/a, e26579.

\bibitem{energy-niki}
{\sc Nikiforov, V.}
\newblock The energy of graphs and matrices.
\newblock {\em Journal of Mathematical Analysis and Applications 326\/} (2007),
  1472--1475.

\bibitem{energy-lucelia}
{\sc Pinheiro, L.}
\newblock {\em Energia laplaciana sem sinal de grafos}.
\newblock Doctoral thesis - Universidade Federal do Rio Grande do Sul (UFRGS).,
  2018.

\bibitem{energy-odd2}
{\sc Pirzada, S., and Gutman, I.}
\newblock Energy of a graph is never the square root of an odd integer.
\newblock {\em Applicable Analysis and Discrete Mathematics 2\/} (2008),
  118--121.

\bibitem{Reff2014}
{\sc Reff, N.}
\newblock Spectral properties of oriented hypergraphs.
\newblock {\em Electronic Journal of Linear Algebra 27\/} (2014), 373--391.

\bibitem{Rodriguez1}
{\sc Rodriguez, J.}
\newblock On the laplacian eigenvalues and metric parameters of hypergraphs.
\newblock {\em Linear and Multilinear Algebra 50\/} (2002), 1--14.

\bibitem{distance}
{\sc Wang, Y., and Zhou, B.}
\newblock On distance spectral radius of hypergraphs.
\newblock {\em Linear and Multilinear Algebra 66\/} (2018), 2232--2246.

\bibitem{extremal-hypertree}
{\sc Zhu, Q.}
\newblock Extremal k-uniform hypertrees on incidence energy.
\newblock {\em International Journal of Quantum Chemistry n/a}, n/a, e26592.

\end{thebibliography}
\end{document}